\documentclass[11pt]{article}
 \usepackage{amsmath, amsthm, amssymb, bbm, setspace,bigints}
 \usepackage[margin=1 in]{geometry}
\usepackage{caption}
\usepackage{subcaption}
\usepackage[toc,page]{appendix}
\usepackage{cite}         
\usepackage[pdftex]{graphicx}
\usepackage{pdfpages}
\usepackage{epstopdf}
\usepackage{booktabs}
\usepackage{authblk}

%
\doublespacing
\bibliographystyle{apalike}
\pagestyle{plain}
\usepackage{amsthm,bbm,amssymb}
\usepackage{amsmath}
\usepackage{natbib}
\usepackage[colorlinks,citecolor=blue,urlcolor=blue,filecolor=blue,backref=page]{hyperref}
\usepackage{graphicx}

\usepackage[utf8]{inputenc}
\usepackage{amsmath,amssymb,natbib,xcolor,multicol,url,mhequ}
\usepackage{graphicx,bbm,xr}
\usepackage{booktabs,epstopdf,color}
\usepackage[space]{grffile}
\usepackage{lineno}
\usepackage{multirow}

\def\prime{\mathrm{\scriptscriptstyle{T}}}

\def\T{{ \mathrm{\scriptscriptstyle T} }}

\def\mb{\mathbb}

\def\ind{\mathbbm{1}}

\def\T{\mathrm{\scriptscriptstyle{T}}}



\renewcommand{\r}{\mathrm{r}}

\DeclareMathOperator*{\argmin}{arg\,min}

\def\mb{\mathbb}

\def\T{{\mathrm{\scriptscriptstyle T} }}

\def\ind{\mathbbm{1}}
\newcommand \bbP{\mathbb{P}}
\newcommand \bbE{\mathbb{E}}
\newcommand{\be}{\begin{equs}}
\newcommand{\ee}{\end{equs}}

\renewcommand{\k}{\mathrm{k}}
\renewcommand{\r}{\mathrm{r}}

\newcommand{\trd}[1]{\left[#1\right]}
\newcommand{\set}[1]{\left\{#1\right\}}
\def \ind{\mathbbm{1}}
\newcommand{\p}[1]{\left(#1\right)}

\numberwithin{equation}{section}
\theoremstyle{plain}
\newtheorem{theorem}{Theorem}[section]
\newtheorem{lemma}{Lemma}[section]
\newtheorem{definition}{Definition}[section]

\newtheorem{proposition}{Proposition}[section]
\newtheorem{remark}{Remark}[section]

\usepackage{lipsum}
\usepackage{amsfonts,amsmath}
\usepackage{tikz-cd}
\usepackage{epstopdf}
\usepackage{algorithm} 
\usepackage{algorithmic}  
\usepackage[algo2e,ruled,vlined]{algorithm2e}

\def\T{{ \mathrm{\scriptscriptstyle T} }}
\def \bX{\mathbf{X}}
\newcommand\bx{\mathbf{x}}
\def \mb{\mathbb}
\def\D{\mathrm{D}}


\title{Statistical optimality and stability of tangent transform algorithms in logit models 
}
\author[1]{Indrajit Ghosh\thanks{indrajit@stat.tamu.edu}}
\author[1]{Anirban Bhattacharya\thanks{anirbanb@stat.tamu.edu}}
\author[1]{Debdeep Pati\thanks{debdeep@stat.tamu.edu}}

\affil[1]{Department of Statistics,  Texas A\&M University, College Station, Texas, 77843, USA}


\begin{document}
\maketitle

\begin{abstract}
A systematic approach to finding variational approximation in an otherwise intractable non-conjugate model is to exploit the general principle of convex duality by minorizing the marginal likelihood that renders the problem tractable. While such approaches are  popular in the context of variational inference in non-conjugate Bayesian models,  theoretical guarantees on statistical optimality and algorithmic convergence are lacking.  Focusing on logistic regression models,  we provide mild conditions on the data generating process to derive non-asymptotic upper bounds to the risk incurred by the variational optima.  We demonstrate that these assumptions can be completely relaxed if one considers a slight variation of the algorithm by raising the likelihood to a fractional power. Next, we utilize the theory of dynamical systems to provide convergence guarantees for such algorithms  in logistic and multinomial logit regression.   In particular, we establish local asymptotic stability of the algorithm without any assumptions on the data-generating process. We explore a special case involving a semi-orthogonal design under which a global convergence is obtained. The theory is further illustrated using several numerical studies. 

\end{abstract}

{\bf Keywords:}
Bayesian; Dynamical System; Logistic regression; R{\'e}nyi divergence; Risk bound; Variational Inference

\section{Introduction}

Variational Inference (VI) has gained substantial momentum in recent years as an efficient way of performing approximate Bayesian inference. VI seeks to minimize a divergence measure between a tractable family of probability distributions and the posterior distribution, utilizing optimization based techniques to arrive at a minima.  In many high dimensional examples where sampling based techniques such as the Markov chain Monte Carlo require expert vigilance and care for scalability, VI provides a viable answer with relatively lower computational cost. Some notable application areas include graphical models \citep{wainwright2008graphical,jordan1999introduction}, hidden markov models \citep{mackay1997ensemble}, latent class models \citep{blei2003latent}, neural networks \citep{graves2011practical} to name a few. Refer to Chapter 10 in \cite{bishop2006pattern} and \cite{blei2017variational} for excellent reviews on the topic. 

The empirical success of VI has prompted researchers to investigate their theoretical properties. Two distinct directions of research seem to have emerged over the last few years. One line of research concerns the statistical aspects of variational estimators \citep{alquier2016properties,pati2017statistical,yang2017alpha,cherief2018consistency,alquier2020concentration,zhang2020convergence,wang2019frequentist,wang2019variational} in a general setting, delineating sufficient conditions on the data generation mechanism and the variational family under which the variational estimators have optimal first or second-order statistical properties. 
Motivated by the robustness properties of a fractional likelihood \citep{bhattacharya2019bayesian,alquier2020concentration}, \cite{yang2017alpha} proposed a simple modification to the variational objective function, deemed as the $\alpha$-Variational Bayes ($\alpha$-VB), that only requires the variational family to be sufficiently flexible and the prior density to be appropriately concentrated around the true parameter to obtain optimal risk bounds.  

The other line of research studies convergence of the algorithms employed to arrive at the variational optimizer.  In this aspect, the coordinate ascent variational inference (CAVI) algorithm for mean-field VI (refer to Chapter 10 of \cite{bishop2006pattern}) has arguably received the most attention due to its simplicity and generality. 
An early result on algorithmic convergence (and lack thereof) of CAVI in Gaussian mixture models appears in \cite{wang2006convergence}. \cite{zhang2020theoretical,mukherjee2018mean} analyzed CAVI for stochastic block models, a popular model for networks belonging to cEXP. \cite{yin2020theoretical} obtained convergence of cluster labels in a stochastic block model by considering a structured variational family which was not possible using mean field VI. \cite{ghorbani2018instability} noted instability of naive mean-field VI in latent Dirichlet allocation and provided a remedy by optimizing a different type of free energy (TAP) instead of the standard variational objective. 
\cite{locatello2018boosting,campbell2019universal} analyzed convergence of a more flexible class of boosting algorithms which aim to approximate the target class by a mixture of Gaussians rather than a single Gaussian or a product distribution.  

Our goal in this article is to explore a popular class of variational approximations outside cEXP, called the {\em tangent-transform approach} \citep{jaakkola1997variational,jaakkola2000bayesian}. The tangent transform approach is an example of a structured variational approximation, lying on the spectrum between the two extremes given by the restrictive mean-field inference and the highly flexible variational boosting. In this specific instance, the structure exploited is convex duality \citep{jordan1999introduction,wainwright2003variational,wainwright2005new} to minorize the log-likelihood function and provide sharp bounds for the log-partition functions in exponential family models. Assume $p(x, \theta)$ is an exponential family on a discrete space $\mathcal{X}$ indexed by parameter $\theta \in \Theta$,
\begin{eqnarray*}
p(x; \theta) = \exp\{\langle \theta, t(x) \rangle - B(\theta)\}, \quad  B(\theta) = \log \Big[\sum_{x \in \mathcal{X}} \exp \{ \langle \theta, t(x)\rangle\}\Big]. 
\end{eqnarray*}
The
log-partition function $B(\theta)$, a convex function of $\theta$, plays a critical role in computing summary measures of $p(x; \theta)$. 
\cite{jaakkola2000bayesian} exploits the dual representation of the log-partition function in terms of its Fenchel-Legendre conjugate 
$
B(\theta) = \sup_{\mu \in \mathcal{M}} [ \langle \theta, t(x) \rangle - \{-H(\mu)\}] ,
$
where $H(\mu)$ is negative entropy of the distribution parameterized by 
$\mu$ and $\mathcal{M}$ is the marginal polytope. 

Ideas related to the tangent-transform have found widespread applications ranging from approximate inference in graphical models \citep{jordan1999introduction}, low-rank approximations \citep{srebro2003weighted}, inference in large scale generalized linear models \citep{nickisch2009convex}, non-conjugate latent Gaussian models \citep{emtiyaz2013fast} to more recently in sparse kernel machines \citep{shi2019integrating}, hierarchical relevance determination \citep{hirose2020hierarchical}, online prediction \citep{konagayoshi2019minimax} among others. \cite{jaakkola2000bayesian} exploits convex duality to minorize the marginal likelihood by introducing a variational parameter that allows the minorant to be arbitrarily close to the marginal likelihood.  Logistic and multinomial logit regression models are notable examples where a clever use of this idea results in a straightforward Expectation-Maximization (EM) algorithm to compute the variational parameters.  

In this article, we investigate both the statistical and algorithmic aspects of the tangent transform algorithm in logit models. Despite its widespread usage, statistical properties of the point estimate of the regression coefficients resulting from a tangent transform algorithm has not been previously studied. One possible reason is that unlike mean-field VI, where the global objective is to minimize the Kullback--Leibler (or another) divergence between a product distribution and the posterior distribution, the tangent transform algorithm is defined {\em locally}, without a clear global objective function that is being minimized. A key observation underlying our statistical analysis expresses any stationary point of the EM algorithm as a minimzer of a suitably chosen {\em global} variational objective function. This observation allows us to extend some previously developed variational risk bounds for mean-field VI \citep{yang2017alpha,pati2017statistical} to the present setting with some non-trivial modifications. 
We show that with minimal assumptions on the data generating process and the prior density on the regression coefficients, the variational risk bound is minimax optimal (up to logarithmic terms).  Moreover, the assumption on the data generating process can be completely relaxed by raising the standard logistic likelihood by a fractional exponent \citep{bhattacharya2019bayesian}.

Next, we  investigate the convergence of the EM algorithm to the fixed point of the EM iterations. 
There has been some previous efforts to shed more light into the EM sequence of tangent-transform algorithms. 
\cite{hunter2004tutorial} studied connections between minimization-majorization (MM) in case of logistic likelihood to argue convergence of the updates. \cite{durante2019conditionally} drew a connection with the P\'{o}lya-Gamma data augmentation technique \citep{polson2013bayesian} to provide a probabilistic interpretation of the EM updates and showed that the optimal evidence lower bound of the tangent transformation approach coincides with the same obtained in a bonafide variational inference with a suitably defined conditionally conjugate exponential family. However, statistical and algorithmic convergence guarantees of the tangent transform itself continue to be an open problem. It may appear on the surface that the EM algorithm underlying tangent-transforms can be analyzed using the general sufficient conditions for convergence of the EM (we refer to the recent article \citep{balakrishnan2017statistical} and the references therein for more on this topic), a careful inspection however reveals that these general-purpose conditions pose significant difficulty to verify for the present EM iterates and demand stringent conditions on the design matrix and other data generating parameters. Our approach, on the other hand, is to directly analyze the EM sequence without resorting to any high-level results. 

By viewing the EM updates as iterations in a discrete time autonomous dynamical system, we show that the EM updates converge to the desired fixed point under suitable initialization, a phenomenon known as local asymptotic stability.  While stability is typically a weaker statement compared to global convergence as it only ensures convergence if the system is initialized in a neighborhood around the fixed point, our stability result is essentially assumption-free -- it does not require any assumption on the design matrix, on the sparsity of the coefficients, and on the dimension $p$ and the sample size $n$. Although the notion of such convergence is local, to the best of our knowledge, this is the first assumption-free result on the stability of a variational algorithm. The main technical contribution is to show that the spectral radius of the Jacobian matrix of the linearized operator of  the EM sequence is strictly smaller than one at the fixed point. In the special case when the design matrix is semi-orthogonal, we show that the EM sequence is globally convergent with an exponential rate of convergence (logarithmic run time) independent of the initialization. We also provide a straightforward extension of this result to the case of  multinomial logit model.

\section{Tangent Transformation Approach}\label{sec:tt}
Denote the data by $X$ and the likelihood conditioned on parameter $\theta \in \Theta$ by $p(X \mid \theta)$, where $\Theta$ is the parameter space.  For a prior density $p(\cdot)$ on $\Theta$, the goal of VI is to approximate the posterior $p(\theta \mid X ) \propto p(\theta) \, p(X \mid \theta)$ by a member of a tractable family $\Gamma$ of densities on $\Theta$ with respect to the Kullback--Leibler (KL) divergence.\footnote{The KL divergence between densities $f$ and $g$, denoted $\mbox{D}(f  \, \| \ g)$, is $D(f  \, \| \ g) := \int f \log (f/g) d\mu$, where $\mu$ is a common dominating measure.}
Notationally, VI seeks to find 
\begin{align}\label{eq:KL}
\hat{q} = \argmin_{q \in \Gamma} \mbox{D}(q \, \| \, p(\cdot \mid  X)), 
\end{align}
which is equivalent to maximizing the evidence lower bound (ELBO), $\mathcal{L}(q) = \int q(\theta)\log\{p(X,\theta)/q(\theta)\}\,d\theta$ with respect to $q \in \Gamma$. Using a component-wise product structure on $\Gamma$ popularly known as the mean field family \citep{parisi1988statistical}, closed-form updates of a coordinate ascent algorithm (CAVI) can be generally derived in conditionally conjugate exponential families \citep{blei2017variational}. However,  many non-conjugate models such as logistic regression, multinomial logit regression, graphical and, topic models, do not lead to closed-form CAVI updates, necessitating various specialized techniques \citep{jordan1999introduction,blei2017variational}. One such approach is to introduce variational parameters to minorize the log-marginal likelihood by a tractable family, which when combined with an appropriate prior enjoys conjugate inference. For Bayesian logistic regression models, \cite{jaakkola2000bayesian} introduced a tangent-transform of the logistic function using convex duality. By a standard result in convex analysis \citep{rockafellar1970convex}, a convex function $f(\cdot)$ on $\mb R^d$ can be represented via a {\em conjugate} or {\em dual} function $f^*$ as,  
\begin{eqnarray}\label{eq:convdual}
f(x) = \max_{\lambda} \{\langle \lambda, x\rangle - f^*(\lambda)\}, \quad f^*(\lambda) = \max_{x} \{\langle \lambda, x\rangle - f(x)\}.
\end{eqnarray}
One simple example of \eqref{eq:convdual} is $ x^2 = \max_{\lambda}\{\lambda x - \lambda^2/4\}$ with equality at $x=\lambda/2$. Similarly, for a concave $f(\cdot)$ we have $f(x) = \min_{\lambda} \{\langle \lambda, x\rangle - f^*(\lambda)\}$ with the dual  being $f^*(\lambda) = \min_{x} \{\langle \lambda, x\rangle - f(x)\}$. 
Geometrically, the evaluation of a convex function at any point $x$ can be viewed as the maxima of the uncountable collection of hyperplanes $\langle \lambda, x \rangle - f^*(\lambda)$ indexed by $\lambda \in \mb R^d$. 

The usage of duality is not restricted to linear approximations, i.e., hyperplanes. In fact, \cite{jaakkola2000bayesian} used a quadratic bound for the logistic function that induces conjugacy with Gaussian priors. 
In the following subsection, we discuss the salient features of the tangent transform approach. 

\subsection{Convex minorant construction for logistic likelihood}
We discuss a slightly general version of the tangent transform approach where we raise the usual logistic likelihood to a power $\alpha \in (0, 1]$ before combining with the prior. Variational Bayes procedures with fractional likelihoods have been recently considered in \cite{yang2017alpha, alquier2020concentration, alquier2016properties}. The case $\alpha = 1$ recovers the usual tangent transform. 

Assuming we observe binary responses $y_i$ corresponding to fixed covariates 
$\bx_i \in \mathbb{R}^p\ (i=1, \ldots, n)$, consider the usual logistic regression model,
\begin{eqnarray}\label{sec_TTA_eqn0}
y_i \mid  \bx_i,\beta &\sim& \mbox{Bernoulli}(p_i), \quad 
p_i = \frac{1}{1+\exp(-\bx^{\T}_i \beta)} \quad (i = 1, \ldots, n). 
\end{eqnarray}
Denote by $\bX$ the $n \times p$ covariate matrix with $i$th row $\bx_i^{\T}\ (i =1, 2, \ldots, n)$. Consider a Gaussian prior $\beta \sim \mbox{N}_p(\mu_{\beta}, \Sigma_{\beta})$, denoted by $\pi(\beta)$.

Denoting $y=(y_1,y_2,\ldots,y_n)^{\T}$, call the joint density of $(y,\beta)$ given $\bX$ by $p(y,\beta \mid \bX)$. For a fixed $\alpha\in(0,1]$, define the fractional likelihood \citep{walker2001bayesian} by $p^{\alpha}(y \mid \bX,\beta) = \{p(y\mid \bX,\beta)\}^{\alpha}$ and denote with a slight abuse of notation, $p^{\alpha}(y,\beta \mid \bX) =p^{\alpha}(y\mid \bX,\beta)\pi(\beta)$, 
\begin{equation} \label{sec_TTA_eqn1}
p^{\alpha}(y, \beta \mid  \bX) \propto \exp{\Big[ \alpha\, y^{\T}\bX\beta - \alpha\, \sum^n_{i=1}\log\big(1+ e^{\bx_i^{\T}\beta}) -\frac{1}{2}{(\beta - \mu_{\beta})}^{\T} \Sigma^{-1}_{\beta} (\beta - \mu_{\beta}) \Big]}. 
\end{equation}
\cite{jaakkola2000bayesian} begins with the following quadratic duality result that holds for all $x \in \mb{R}$:
\begin{align*} 
& -\log\{1+\exp{(x)}\} = \max_{t \in \mb{R}} [A(t)x^2 - x/2 + C(t)], \\
&  A(t) = -{\tanh(t/2)}/{4t}, \quad C(t) = {t}/{2} - \log\{1+ \exp(t)\} + {t \tanh(t/2)}/{4}.
\end{align*}
We can then bound $\log p^{\alpha}(y,\beta \mid  \bX)$ from below by $\log {p}_{\l}^{\alpha}(y,\beta \mid \bX, \xi)$, where 
\begin{align}\label{sec_TTA_eqn2}
    \log{p}_{\l}^{\alpha}(y, \beta \mid \bX, \xi) = 
    &-\frac{1}{2}\beta^{\T}\left[\Sigma^{-1}_{\beta} - 2 \alpha \bX^{\T}\text{diag}\{A(\xi)\}\bX\right]\beta + \Big\{ {\alpha\Big(y-\frac{1}{2}\ind_n \Big)}^{\T}\bX + \mu^{\T}_{\beta}\Sigma^{-1}_{\beta}\Big\} \beta \nonumber \\ 
    &- \mu^{\T}_{\beta}\Sigma^{-1}_{\beta}\mu_{\beta}
    + \alpha \ind^{\T}_n C(\xi)  + \mbox{Const.}.
\end{align}
In the above display, $\xi=(\xi_1, \ldots, \xi_n)^{\T}$ collectively denotes all variational parameters, with $\xi_i$ appearing from applying the previous duality result for $-\log\{1 + \exp(\bx_i^\T \beta)\}$. 
Also, $\text{diag}\{A(\xi)\}$ is a $n \times n$ diagonal matrix with diagonal entries $\{A(\xi_1), A(\xi_2), \ldots, A(\xi_n)\}$ and $C(\xi) = \{C(\xi_1), \ldots, C(\xi_n)\}^{\T}$. 

Since ${p}_{\l}^{\alpha}(y,\beta \mid \bX, \xi)$ serves as a lower bound to $ p^{\alpha}(y,\beta \mid  \bX)$ for any $\xi \in \mb{R}^n$, similar to \cite{jaakkola2000bayesian} we use an empirical Bayes approach to estimate the variational parameters $\xi$ by maximizing ${p}_{\l}^{\alpha}(y \mid \bX, \xi) = \int {p}_{\l}^{\alpha}(y,\beta \mid \bX, \xi) d\beta$ with respect to $\xi$.
The true posterior distribution of $\beta$  in \eqref{sec_TTA_eqn1} is not available in closed form. However, assuming \eqref{sec_TTA_eqn2} to be a working (pseudo)-likelihood of $y,\beta$ given $\bX, \xi$, it is straightforward to see that the corresponding conditional posterior distribution of $\beta$  is  $\mbox{N}(\mu_{\alpha}(\xi), \Sigma_{\alpha}(\xi)/\alpha)$ where
\begin{eqnarray}\label{eq:muxisigxi}
\Sigma_{\alpha}^{-1}(\xi) = \Sigma^{-1}_{\beta}/\alpha - 2 \bX^{\T}\mbox{diag}\{A(\xi)\}\bX,\,\,\,\,  \mu_{\alpha}^{\T}(\xi)\Sigma_{\alpha}^{-1}(\xi) = \Big(y-  \frac{1}{2}\ind_n \Big)^{\T}\bX + \mu^{\T}_{\beta}\Sigma^{-1}_{\beta}/\alpha.
\end{eqnarray}
Treating $\beta$ as latent variables and augmenting with $y$ to get the complete data, one obtains the E-step,
\begin{align}\label{eq:EMQ}
    Q_{\alpha}(\xi^{t+1} \mid \xi^{t}) &= \mb{E}_{\beta \mid y,\xi^{t},\bX} \left[\log {p}_{\l}^{\alpha}(y, \beta \mid \xi^{t+1}, \bX) \right]\\
    &= \mbox{tr}\left[ \alpha \bX^{\T} \mbox{diag}\{A(\xi^{t+1}\}\bX\{ \Sigma_{\alpha}(\xi^{t})/\alpha + \mu_{\alpha}(\xi^{t})\mu_{\alpha}^{\T}(\xi^{t})\}\right] + \alpha \ind_n^{\T}C(\xi^{t+1})  + \mbox{Const.}, \nonumber 
\end{align}
where $\mbox{tr}(A)$ denotes the trace of a matrix $A$. 
Upon differentiating the above expression with respect to $\xi^{t+1}$ and using the fact that $C^{\prime}(x) = -x^2 A^{\prime}(x)$, we get the M-step,
 \begin{align}\label{sec_TTA_eqn3}
      (\xi^{t+1})^2 = \mbox{diag}[\bX\{ \Sigma_{\alpha}(\xi^{t})/\alpha + \mu_{\alpha}(\xi^{t})\mu_{\alpha}^{\T}(\xi^{t})\}\bX^{\T}]. 
 \end{align}
 The square operation in the above display is to be interpreted elementwise. We assume convergence when the increment in ${p}_{\l}^{\alpha}(y \mid \bX, \xi^t) $ is negligible which implies convergence of $\xi^t$ by virtue of EM algorithm. The EM sequence in \eqref{sec_TTA_eqn3} is recognized to be a fixed point iteration corresponding to the fixed point equation given by,
\begin{align}\label{eq:fp}
(\xi^{*})^2 = \mbox{diag}[\bX\{ \Sigma_{\alpha}(\xi^*)/{\alpha} + \mu_{\alpha}(\xi^*)\,\mu^{\T}_{\alpha}(\xi^*)\}\bX^{\T}].
\end{align}
Assuming \eqref{sec_TTA_eqn3} converges to a fixed point $\xi^*$, $\mu_{\alpha}(\xi^*)$ gives the variational estimate of $\beta$.
 
 \section{Statistical optimality of the variational estimate} \label{sec:statopt}
In this section we develop a rigorous framework to obtain frequentist risk bounds of the variational approximation obtained in \eqref{eq:muxisigxi} at any fixed point $\xi^*$ of \eqref{sec_TTA_eqn3}. 
 Throughout the section, 
we assume that the data is generated from a logistic regression model 
\begin{eqnarray}\label{eq:true}
p(y \mid \beta^*, {\bf X}) =  \exp{\Big[ y^{\T}\bX\beta^* - \sum^n_{i=1}\log\big(1+ e^{\bx_i^{\T}\beta^*})\Big]}.
\end{eqnarray}
It is not immediately clear whether the empirical likelihood based inference 
of $\xi$ as discussed in Section \ref{sec:tt} falls into the framework of variational inference in the sense of \eqref{eq:KL}.  In the following, we propose an objective function whose minimizer satisfies the fixed point iteration \eqref{sec_TTA_eqn3}.  Let our {\em working model} be 
\begin{eqnarray}\label{eq:wm}
  {p}_{\l}^{\alpha}(y \mid  \beta, \bX, \xi) &= 
   \exp{\Big\{\alpha \Big(y^{\T}\bX\beta  
       + \beta^{\T}\left[\bX^{\T}\text{diag}\{A(\xi)\}\bX\right]\beta - 0.5\ind_n^{\T}\bX + \ind^{\T}_n C(\xi)\Big)\Big\} }. 
       \end{eqnarray}
It is important to note here that ${p}_{\l}^{\alpha}(y \mid  \beta, \bX, \xi)$ is not a probability density, even when $\alpha=1$. Let $\mathcal{F}$ be the set of densities on $\mathbb{R}^p$.  Define a mapping from $\mathcal{L}: \mathcal{F} \times \mathbb{R}^n$ to $\mathbb{R}$  as
 \begin{eqnarray}\label{eq:ELBO}
\mathcal{L}(q, \xi) =  - \int \log \frac{{p}_{\l}^{\alpha}(y, \beta \mid \bX, \xi)}{q(\beta)} q(\beta) d\beta,
 \end{eqnarray}
where ${p}_{\l}^{\alpha}(y, \beta \mid \bX, \xi)$ is defined in \eqref{sec_TTA_eqn2}.  Observe that $\mathcal{L}(q, \xi)$ is the negative of the evidence lower bound obtained in a variational inference with \eqref{eq:wm} as the working likelihood, $\mbox{N}_p(\mu_{\beta}, \Sigma_{\beta})$ the prior on $\beta$, and variational family $\mathcal{F}  \times \{\delta_\xi: \xi \in \mathbb{R}^n\}$ where $\delta_\xi$ is the Dirac delta measure on $\xi \in \mathbb{R}^n$. In Lemma \ref{eq:varsoln}, we show that the tangent transform algorithm  maximizes $-\mathcal{L}(q, \xi)$. 
\begin{lemma}\label{eq:varsoln}
Any minimizer $(q^*, \xi^*)$ of \eqref{eq:ELBO} over  $\mathcal{F} \times \mathbb{R}^n$ satisfies 
\begin{eqnarray}\label{eq:fp1}
q^* =  \mbox{N}_p\{ \mu_{\alpha}(\xi^*), \Sigma_{\alpha}({\xi^*})/{\alpha}\}, \quad (\xi^{*})^2 = \mbox{diag}[\bX\{ \Sigma_{\alpha}(\xi^*)/{\alpha} + \mu_{\alpha}(\xi^*)\,\mu^{\T}_{\alpha}(\xi^*)\}\bX^{\T}], 
\end{eqnarray}
where $\mu_{\alpha}(\xi), \Sigma_{\alpha}({\xi})$ are defined in \eqref{eq:muxisigxi}. 
\end{lemma}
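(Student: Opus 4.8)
The plan is to characterize the minimizer of $\mathcal{L}(q, \xi)$ over $\mathcal{F} \times \mathbb{R}^n$ by exploiting the fact that the objective decouples nicely once we fix $\xi$. First I would fix $\xi \in \mathbb{R}^n$ arbitrarily and minimize $\mathcal{L}(\cdot, \xi)$ over $q \in \mathcal{F}$. Writing out the integrand, $\mathcal{L}(q,\xi) = \int q(\beta) \log\{q(\beta)/{p}_{\l}^{\alpha}(y,\beta \mid \bX, \xi)\}\,d\beta$, which is (up to an additive constant not depending on $q$) the KL divergence $\mathrm{D}(q \,\|\, \tilde{p}(\cdot))$ where $\tilde{p}(\beta) \propto {p}_{\l}^{\alpha}(y,\beta \mid \bX, \xi)$ is the normalized version of the working joint. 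From the explicit quadratic-in-$\beta$ form of $\log {p}_{\l}^{\alpha}(y,\beta \mid \bX, \xi)$ in \eqref{sec_TTA_eqn2}, the normalized density $\tilde p$ is exactly $\mbox{N}_p\{\mu_\alpha(\xi), \Sigma_\alpha(\xi)/\alpha\}$, reading off the precision matrix as $\alpha[\Sigma_\beta^{-1}/\alpha - 2\bX^\T \diag\{A(\xi)\}\bX] = \alpha \Sigma_\alpha^{-1}(\xi)$ and the linear term to match the definitions in \eqref{eq:muxisigxi}. Since KL divergence is uniquely minimized at zero by $q = \tilde p$, the inner minimizer is $q^*_\xi = \mbox{N}_p\{\mu_\alpha(\xi), \Sigma_\alpha(\xi)/\alpha\}$, giving the first identity in \eqref{eq:fp1}. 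One caveat to address here: this requires $\Sigma_\alpha^{-1}(\xi)$ to be positive definite so that $\tilde p$ is a genuine density; since $A(\xi_i) = -\tanh(\xi_i/2)/(4\xi_i) < 0$ for all real $\xi_i$ (with the continuous extension $A(0) = -1/8$), the matrix $-2\bX^\T\diag\{A(\xi)\}\bX$ is positive semidefinite, and adding $\Sigma_\beta^{-1}/\alpha \succ 0$ makes it positive definite, so this is automatic.

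Next I would substitute $q^*_\xi$ back and obtain the profiled objective $\ell(\xi) := \mathcal{L}(q^*_\xi, \xi) = -\log \int {p}_{\l}^{\alpha}(y,\beta \mid \bX, \xi)\,d\beta = -\log {p}_{\l}^{\alpha}(y \mid \bX, \xi)$, i.e., minimizing $\mathcal{L}$ jointly is equivalent to maximizing the marginal working likelihood ${p}_{\l}^{\alpha}(y \mid \bX, \xi)$ over $\xi$, which is precisely the empirical-Bayes step from Section \ref{sec:tt}. It then remains to show that any maximizer $\xi^*$ of ${p}_{\l}^{\alpha}(y \mid \bX, \xi)$ — equivalently any stationary point, if we argue the relevant first-order condition is necessary — satisfies the fixed-point equation \eqref{eq:fp}. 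For this I would recall the EM derivation already presented: the map $\xi^t \mapsto \xi^{t+1}$ defined through the $Q$-function in \eqref{eq:EMQ}–\eqref{sec_TTA_eqn3} is a bona fide EM update for the complete-data model with $\beta$ latent, hence its fixed points coincide with the stationary points of the observed-data objective $\log {p}_{\l}^{\alpha}(y \mid \bX, \xi)$ (using $C'(x) = -x^2 A'(x)$ as in the text), and these are exactly the solutions of \eqref{eq:fp}. Combining, $(\xi^*)^2 = \diag[\bX\{\Sigma_\alpha(\xi^*)/\alpha + \mu_\alpha(\xi^*)\mu_\alpha^\T(\xi^*)\}\bX^\T]$, which is the second identity in \eqref{eq:fp1}.

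The main obstacle I anticipate is the second step — rigorously linking a global \emph{minimizer} of $\mathcal{L}$ to the fixed-point equation, rather than merely to a stationary point. A clean minimizer does not obviously exist a priori since $\xi \mapsto {p}_{\l}^{\alpha}(y \mid \bX, \xi)$ need not be coercive or even bounded, and $A, C$ behave delicately as $\xi_i \to 0$ and $\xi_i \to \pm\infty$; so the honest statement is about stationary points. The cleanest route is to simply differentiate $\ell(\xi)$ directly: using that $\tilde p = q^*_\xi$, the envelope-type computation $\partial \ell / \partial \xi_i = -\mb{E}_{\beta \sim q^*_\xi}[\partial_{\xi_i} \log {p}_{\l}^{\alpha}(y,\beta \mid \bX,\xi)]$ (the $\beta$-dependent normalizer variation vanishes because $q^*_\xi$ is the normalized working joint), and then plugging in the quadratic form and $C'(x) = -x^2 A'(x)$ collapses each coordinate equation to $\xi_i^2 = \mb{E}_{\beta \sim q^*_\xi}[(\bx_i^\T\beta)^2] = [\bX\{\Sigma_\alpha(\xi)/\alpha + \mu_\alpha(\xi)\mu_\alpha^\T(\xi)\}\bX^\T]_{ii}$, after dividing out the factor $A'(\xi_i)$ (nonzero for $\xi_i \neq 0$; the $\xi_i = 0$ case handled separately via the continuous extension). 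I would also double-check the sign and the elementwise-square conventions so that the displayed \eqref{eq:fp1} matches \eqref{sec_TTA_eqn3} exactly, and note that the KL-minimization argument in the first step already forces $q^*$ to be Gaussian for \emph{any} candidate $\xi^*$, so no extra regularity is needed there.
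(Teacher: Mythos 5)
Your proposal is correct and follows essentially the same route as the paper: optimize over $q$ first to get the Gaussian conditional $\mbox{N}_p\{\mu_\alpha(\xi),\Sigma_\alpha(\xi)/\alpha\}$, then impose first-order stationarity in $\xi$ (which the paper writes as $\mb{E}_{q^*}[\partial_\xi \log p_{\l}^{\alpha}]=0$ and identifies with the EM M-step condition via $C'(x)=-x^2A'(x)$) to recover the fixed-point equation. Your added remarks on positive definiteness of $\Sigma_\alpha^{-1}(\xi)$, the minimizer-versus-stationary-point distinction, and dividing out $A'(\xi_i)$ are sound refinements that the paper's proof glosses over, but they do not change the argument.
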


\begin{proof}
 We start the proof by re-writing  \eqref{eq:ELBO} as
 \begin{eqnarray}\label{eq:ELBO3}
  \mathcal{L}(q, \xi) = - \int q(\beta)  \log {p}_{\l}^{\alpha}(y, \beta \mid \bX, \xi) d\beta + \int q(\beta) \log \{q(\beta) \} d\beta. 
   \end{eqnarray}
 To minimize  \eqref{eq:ELBO3} jointly with respect to $(q, \xi)$, we set up the first order stationarity conditions. We first set the gradient of $\mathcal{L}(q, \xi)$ with respect to $\xi$ to zero holding $q$ fixed. As the second term in \eqref{eq:ELBO3} is independent of $ \xi$, this is equivalent to setting the gradient of $\mb{E}_{q} \big[\log {p}_{\l}^{\alpha}(y, \beta \mid \xi, \bX) \big]$ with respect to $\xi$ to be zero,
 \begin{eqnarray}\label{eq:stationary}
\frac{\partial}{\partial \xi} \mb{E}_{q} \left[\log {p}_{\l}^{\alpha}(y, \beta \mid \xi, \bX) \right] = 0. 
\end{eqnarray}
By an application of Fubini, \eqref{eq:stationary} is equivalent to
\begin{eqnarray}\label{eq:stationary2}
\mb{E}_{q} \left[\frac{\partial}{\partial \xi}  \log {p}_{\l}^{\alpha}(y, \beta \mid \xi, \bX) \right] = 0.
\end{eqnarray}
For fixed $\xi$, to maximize \eqref{eq:ELBO}, we simply apply Lemma \ref{thm:cond} in the Appendix. This leads to the optimal choice of $q(\beta)$ being the conditional distribution ${p}_{\l}^{\alpha}(\beta \mid y, \xi, \bX)$ which is $\mbox{N}_p(\mu_{\alpha}(\xi), \Sigma_{\alpha}({\xi})/{\alpha})$. 
This when combined with \eqref{eq:stationary2} yields
\begin{eqnarray}\label{eq:fo}
  \mb{E}_{\mbox{N}_p(\mu_{\alpha}(\xi), \Sigma_{\alpha}({\xi})/{\alpha})} \left[ \frac{\partial}{\partial \xi} \log{p}_{\l}^{\alpha}(y, \beta \mid \xi, \bX) \right] = 0.
\end{eqnarray}
To show that the solution of \eqref{eq:fo} satisfies \eqref{eq:fp}, recall that the first-order stationarity condition for maximizing  $Q_{\alpha}(\xi^{t+1} \mid \xi^{t})$ in \eqref{eq:EMQ} with respect to $\xi^{t+1}$ is given by 
\begin{eqnarray*}
\frac{\partial}{\partial \xi^{t+1}} Q_{\alpha}(\xi^{t+1} \mid \xi^{t}) = \mb{E}_{\beta \mid y,\xi^{t},\bX} \left[\frac{\partial}{\partial \xi^{t+1}}\log {p}_{\l}^{\alpha}(y, \beta \mid \xi^{t+1}, \bX) \right] = 0,
\end{eqnarray*}
which in turn is equivalent to solving the fixed point iteration $(\xi^{t+1})^2 = \mbox{diag}[\bX\{ \Sigma_{\alpha}(\xi^{t})/\alpha\, +\, \mu_{\alpha}(\xi^{t})\,\mu^{\T}_{\alpha}(\xi^{t})\}\bX^{\T}]$.  Thus the solution to \eqref{eq:fo} satisfies $(\xi^{*})^2 = \mbox{diag}[\bX\{ \Sigma_{\alpha}(\xi^*)/{\alpha} \,+\, \mu_{\alpha}(\xi^*)\,\mu^{\T}_{\alpha}(\xi^*)\}\bX^{\T}]$.
\end{proof}

Although \eqref{eq:ELBO} is reminiscent of the $\alpha$-variational objective function of \cite{yang2017alpha}, we note a couple of key differences : (a) ${p}_{\l}^{\alpha}(y \mid \beta, \xi, \bX)$ is not a valid probability density, but it is a lower bound to  $p^{\alpha}(y \mid \beta, \bX)$, (b) The latent variables $\xi$ lack a probabilistic interpretation as in \cite{yang2017alpha}, where one recovers the original likelihood after marginalization over the latent variables. Here, the latent variables instead correspond to tuning parameters appearing from convex duality. 

The usage of fractional likelihood for $\alpha \in (0, 1)$ results in only minor changes from a methodological and implementation perspective. However, from a theoretical perspective, like \cite{yang2017alpha}, $\alpha \in (0, 1)$ requires fewer assumptions to deliver optimal risk bounds.
 
\subsection{Variational Risk Bounds}
In the following, we develop risk bounds for the variational estimator separately for the case $\alpha \in (0, 1)$ and $\alpha =1$.  In the former case, to quantify the discrepancy between the variational estimate and the true parameter, we use an $\alpha$-R\'{e}nyi divergence
\begin{eqnarray}\label{eq:Renyi}
\mbox{D}_\alpha(\beta, \beta^*) = \frac{1}{n(\alpha -1)} \log \int \Bigg\{ \frac{p(y \mid \beta, \bX)}{p(y \mid \beta^*, \bX)}\Bigg\}^\alpha p(y \mid \beta^*, \bX) dy.
\end{eqnarray}
Refer to \cite{bhattacharya2019bayesian} for more on posterior risk bounds under the $\alpha$-R\'{e}nyi divergence. The factor $(1/n)$ is used to measure {\em average discrepancy} per observation. We can further simplify \eqref{eq:Renyi} to 
\begin{eqnarray*}
\mbox{D}_\alpha(\beta, \beta^*) = \frac{1}{n(\alpha -1)} \sum_{i=1}^n \log \Big[ p_{i,\beta}^\alpha p_{i,\beta^*}^{1- \alpha} + (1-p_{i,\beta})^\alpha (1- p_{i,\beta^*})^{1- \alpha}\Big],
\end{eqnarray*}
where $p_{i,\beta} = 1/\{1+ \exp(-\bx_i^{\T}\beta)\}$. 
The next theorem derives an upper bound  to the risk obtained by integrating the $\alpha$-R\'{e}nyi divergence with respect to the optimal variational solution. Denote by $\phi_p(x; \mu, \Sigma)$ the $p$-dimensional multivariate Gaussian density evaluated at $x \in \mathbb{R}^p$, with mean $\mu$ and variance covariance matrix $\Sigma$.  Let $\|\bX\|_{2, \infty} = \max\{\|\bx_i\|, i=1, \ldots, n\}$ and $ \| \bX\|_{\infty} := \max \{|x_{ij}|, i=1, \ldots, n, j=1, \ldots, p\}$.  Let  $L(\beta^*, \bX) = \max\{4\| \bX\|_{2, \infty}, 8 \| \bX\|_{2, \infty}^2 \|\beta^*\|_2 \}$.  

\begin{theorem}\label{thm:statopt}
For any $\varepsilon \in (0, 1)$, with probability $(1 - \varepsilon) -  1/ \{(D-1)^2\, n \, \varepsilon^2\}$ under \eqref{eq:true}
\begin{eqnarray*}
(1- \alpha) \int \mbox{D}_\alpha(\beta, \beta^*) \phi_p \big\{ \beta; \mu_{\alpha}(\xi^*), \Sigma_{\alpha}(\xi^*) \big\} d\beta &\leq&  D\alpha  \, \varepsilon^2 
+ \frac{p}{n} \log \Big\{\frac{L(\beta^*, \bX) }{\varepsilon^2} \Big\} +  \\
&{}& C_n(\beta^*, \mu_\beta, \Sigma_\beta) + \frac{1}{n} \log \Big( \frac{1}{\varepsilon} \Big) 
\end{eqnarray*}
 for some constant $D > 0$, where
\begin{eqnarray*}
C_n(\beta^*, \mu_\beta, \Sigma_\beta)= \frac{1}{2n}(\beta^*-\mu_\beta)^{\T}\Sigma_\beta^{-1} (\beta^*-\mu_\beta). 
\end{eqnarray*}

\end{theorem}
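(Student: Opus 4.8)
The plan is to convert the $\alpha$-R\'enyi risk into a KL-to-prior term plus a log-likelihood-ratio term via a Donsker--Varadhan / PAC--Bayes bound, then use Lemma~\ref{eq:varsoln} together with the minorization ${p}_{\l}^{\alpha}(y,\beta\mid\bX,\xi)\le p^{\alpha}(y,\beta\mid\bX)$ to control those terms by a narrow Gaussian comparison measure centered at $\beta^*$. Write $\hat L_n(\beta)=\log\{p(y\mid\beta^*,\bX)/p(y\mid\beta,\bX)\}$ and $\rho_\alpha(\beta,\beta^*)=\int\{p(y\mid\beta,\bX)/p(y\mid\beta^*,\bX)\}^\alpha p(y\mid\beta^*,\bX)\,dy$, so that $(1-\alpha)\mbox{D}_\alpha(\beta,\beta^*)=-n^{-1}\log\rho_\alpha(\beta,\beta^*)$. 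With the deterministic reference $\pi=\mbox{N}_p(\mu_\beta,\Sigma_\beta)$, Fubini gives $\mathbb{E}_{y\sim p(\cdot\mid\beta^*,\bX)}\!\int\{p(y\mid\beta,\bX)/p(y\mid\beta^*,\bX)\}^\alpha\rho_\alpha(\beta,\beta^*)^{-1}\pi(d\beta)=1$, so Markov's inequality yields, with probability at least $1-\varepsilon$, $\log\!\int\{p(y\mid\beta,\bX)/p(y\mid\beta^*,\bX)\}^{\alpha}\rho_\alpha(\beta,\beta^*)^{-1}\pi(d\beta)\le\log(1/\varepsilon)$. Applying the Donsker--Varadhan formula to this integral at the (data-dependent) measure $q^*$ and rearranging gives, on this event,
\[
n(1-\alpha)\!\int\!\mbox{D}_\alpha(\beta,\beta^*)\,q^*(d\beta)\le\log(1/\varepsilon)+D(q^*\,\|\,\pi)+\alpha\!\int\!\hat L_n(\beta)\,q^*(d\beta).
\]

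\textbf{Step 2 (use the variational optimum).} Since $p^{\alpha}(y,\beta\mid\bX)=\pi(\beta)p(y\mid\beta,\bX)^\alpha\ge{p}_{\l}^{\alpha}(y,\beta\mid\bX,\xi^*)$, expanding $D(q^*\,\|\,\pi)+\alpha\int\hat L_n q^*$ and substituting this inequality shows $D(q^*\,\|\,\pi)+\alpha\int\hat L_n q^*\le\mathcal L(q^*,\xi^*)+\alpha\log p(y\mid\beta^*,\bX)$. By Lemma~\ref{eq:varsoln}, $(q^*,\xi^*)$ minimizes $\mathcal L$, so this is $\le\mathcal L(q,\xi)+\alpha\log p(y\mid\beta^*,\bX)$ for every $q\in\mathcal F$ and every $\xi\in\mathbb R^n$; writing out $\mathcal L(q,\xi)$ from \eqref{eq:ELBO}--\eqref{sec_TTA_eqn2} and cancelling the $\log p(y\mid\beta^*,\bX)$ terms, the right-hand side becomes
\[
D(q\,\|\,\pi)+\alpha\!\int\!\hat L_n(\beta)\,q(d\beta)+\alpha\sum_{i=1}^n\mathbb{E}_q\big[-\log(1+e^{\bx_i^{\T}\beta})-g_{\xi_i}(\bx_i^{\T}\beta)\big],
\]
where $g_t(z)=A(t)z^2-z/2+C(t)$ is Jaakkola's quadratic minorant and the last sum --- the \emph{minorization gap} --- is nonnegative.

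\textbf{Step 3 (choose the comparison and bound the pieces).} Take $q=\mbox{N}_p(\beta^*,\tau^2 I_p)$ for a small $\tau^2$ and $\xi_i=|\bx_i^{\T}\beta^*|$ (the tangent point at the mean). Because $\mathbb{E}_q\beta=\beta^*$, both $\int\hat L_n\,q=\sum_i\{\mathbb{E}_q\log(1+e^{\bx_i^{\T}\beta})-\log(1+e^{\bx_i^{\T}\beta^*})\}$ and the minorization gap are \emph{deterministic}; since $z\mapsto\log(1+e^z)$ has second derivative at most $1/4$, and $z\mapsto-\log(1+e^z)-g_{|m|}(z)$ vanishes to second order at $z=m$ with curvature bounded by $1/4$, both are at most $\tfrac18\tau^2\tr(\bX^{\T}\bX)\le\tfrac18 n\tau^2\|\bX\|_{2,\infty}^2$. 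Moreover $D(q\,\|\,\pi)=\tfrac12\{\tau^2\tr(\Sigma_\beta^{-1})+(\beta^*-\mu_\beta)^{\T}\Sigma_\beta^{-1}(\beta^*-\mu_\beta)-p+p\log(1/\tau^2)+\log\det\Sigma_\beta\}$, so $n^{-1}D(q\,\|\,\pi)$ contributes exactly $C_n(\beta^*,\mu_\beta,\Sigma_\beta)$, plus $\tfrac{p}{2n}\log(1/\tau^2)$, plus lower-order terms. Choosing $\tau^2$ of order $\varepsilon^2/L(\beta^*,\bX)$ turns the $\log(1/\tau^2)$ term into $\tfrac{p}{n}\log\{L(\beta^*,\bX)/\varepsilon^2\}$ and the gap and likelihood-ratio terms into a constant multiple of $\alpha\varepsilon^2$, giving the stated bound; a complementary Chebyshev bound on a residual empirical-process fluctuation accounts for the additional $1/\{(D-1)^2 n\varepsilon^2\}$ loss in probability.

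\textbf{Main obstacle.} The crux is Steps~2--3: unlike ordinary $\alpha$-VB, the working ``likelihood'' ${p}_{\l}^{\alpha}$ is only a lower bound for the true one, so one must show that the minorization gap --- evaluated at the $\xi$ \emph{dictated} by the EM fixed point, not a free variational parameter --- costs nothing statistically, i.e.\ is dominated up to constants by the prior-concentration term already present. Making the curvature bound on $-\log(1+e^z)-g_{|m|}(z)$ hold uniformly in $z$ (so the per-observation gap is $O(\tau^2\|\bX\|_{2,\infty}^2)$), verifying that the optimality step still yields a \emph{deterministic} comparison after integrating $\hat L_n$ against a $q$ centered at $\beta^*$, and tracking constants so that $\tau^2$ can be calibrated against the stated $L(\beta^*,\bX)$, is where the real work lies.
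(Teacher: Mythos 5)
Your Steps 1--2 reproduce the paper's ``risk majorization'' step essentially verbatim: the same Fubini--Markov--Donsker--Varadhan chain yields, with probability $1-\varepsilon$, the bound $n(1-\alpha)\int \mbox{D}_\alpha(\beta,\beta^*)\,q^*(d\beta)\le \log(1/\varepsilon)+D(q^*\,\|\,\pi_\beta)+\alpha\int \hat L_n\,dq^*$, and both you and the paper then invoke Lemma \ref{eq:varsoln} to pass to an arbitrary test pair $(q,\xi)$, at the cost of the (nonnegative) minorization gap. Where you genuinely diverge is the choice of that test pair. The paper takes $q$ to be the prior restricted to a KL-type neighborhood $\mathcal{B}_n(\beta^*,\varepsilon)$ with $\xi_i=\bx_i^{\T}\beta^*$, must then control a genuinely random likelihood-ratio term by Chebyshev's inequality (this is precisely where the $1/\{(D-1)^2 n\varepsilon^2\}$ probability loss originates), and bounds the prior mass of the neighborhood directly. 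You instead take $q=\mbox{N}_p(\beta^*,\tau^2 I_p)$, which kills the data-dependent term $y^{\T}\bX(\beta-\beta^*)$ exactly because $\mathbb{E}_q\beta=\beta^*$; your likelihood-ratio and Jensen-gap terms then become deterministic and are handled by pure curvature bounds, while $D(q\,\|\,\pi_\beta)$ produces $C_n$ and the $\frac{p}{n}\log\{L(\beta^*,\bX)/\varepsilon^2\}$ term in closed form. Your route is cleaner in one respect --- it should deliver the bound with probability $1-\varepsilon$ outright, so your closing appeal to ``a residual empirical-process fluctuation'' to manufacture the $1/\{(D-1)^2n\varepsilon^2\}$ term is unnecessary and suggests you had not fully registered that nothing random survives the centering --- but it is looser in another: the Gaussian KL also emits $\frac{1}{2n}\log\det\Sigma_\beta$, $\frac{\tau^2}{2n}\tr(\Sigma_\beta^{-1})$ and $-\frac{p}{2n}$ terms, which you dismiss as lower order but which must be absorbed into $D$ or $L(\beta^*,\bX)$ to recover the theorem as stated (the paper's own prior-mass bound hides comparable $\Sigma_\beta$-dependent constants, so this is a shared looseness rather than a defect unique to your argument). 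Your identification of the main obstacle --- that the gap must be shown to cost only $O(\tau^2\|\bX\|_{2,\infty}^2)$ per observation when the tangent point is $|\bx_i^{\T}\beta^*|$ --- is exactly the content of the paper's second-order Taylor expansion of $f$ in the variable $\xi^2$, so the two proofs agree on where the work lies even though they package it differently.
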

The proof of Theorem \ref{thm:statopt} can be found in \S \ref{ssec:statopt1} in the Appendix. 
\begin{remark}
Setting $\varepsilon = (p \log n /n)^{1/2}$, the risk bound for discrepancy $\mbox{D}_\alpha$ is $p/n$ upto logarithmic terms which is minimax optimal. The explicit bound is non-asymptotic and depends on prior parameters, the covariate matrix $\bX$ and the true data generating density.  
\end{remark}

Next, we separately deal with the case $\alpha =1$.  In doing so, we work with a limiting metric of  $\alpha$-R\'{e}nyi divergence as $\alpha$ tends to $1$.   Let  
$a(t) = \log (1+ e^t)$ and $a^{(1)}$ and $a^{(2)}$ denote the first and second derivatives. 
$a$ satisfies $a(t + h) \ge a(t) + h \, a^{(1)}(t) + \r(|h|) \, a^{(2)}(t)/2$ for all $t, h$,  where $r(h) = h^2/(\r_1 h + 1)$ for $\r_1 > 0$.  Define 
\begin{eqnarray*}
\mbox{D}(\beta^\ast, \beta)&:=& \frac{1}{n}\mb E_{\beta^*} \bigg\{ \log \frac{p(y \mid \beta^*, \bX)}{p(y \mid \beta, \bX)}\bigg\}  = \frac{1}{n}\sum_{i=1}^n \big\{ a(\bx_i^{\T}\beta) - 
a(\bx_i^{\T}\beta^*) - a^{(1)}(\bx_i^{\T}\beta^*) \bx_i^{\T}(\beta - \beta^*)\big\}. 
\end{eqnarray*}
$\mbox{D}(\beta^\ast, \beta)$ is the KL divergence between $p(\cdot \mid \beta^*, \bX)$ and $p(\cdot \mid \beta, \bX)$. Let $W = \mbox{diag}\{a^{(2)}(\bx_1^{\T}\beta^*), \ldots, a^{(2)}(\bx_n^{\T}\beta^*)\}$ and let $\kappa_1 = \lambda_p(\bX^{\T}\bX/n)$, $\kappa_2 = \lambda_1(\bX^{\T}W\bX/n)$, where $\lambda_j(A)$ denotes the $j$th largest  eigen value of a positive definite matrix $A$. 

\begin{theorem}\label{thm:statopt2}
Fix $\gamma, \varepsilon \in (0, 1)$ and $\epsilon = \kappa_1 \varepsilon/(2\r_1 \|\bX\|_\infty p^{1/2})$. If  ${n}^{1/2} \geq p (\log p)^{1/2}  \|\bX\|_\infty^2/\{\kappa_1\surd{2}\}$ and both $\kappa_1, \kappa_2 > 0$,  then with probability  $1- 2e^{- n \gamma \epsilon/8} - e^{-n \kappa_2 \varepsilon^2/2} - 1/ \{(D-1)^2\, n \, \varepsilon^2\}$ 
 under \eqref{eq:true}, 
\begin{eqnarray*}
\gamma \int \mbox{D}(\beta^\ast, \beta) \phi_p \big\{ \beta; \mu(\xi^*), \Sigma(\xi^*) \big\} d\beta \leq  (D + \kappa_2/2) \, \varepsilon^2 
+  \frac{p}{n} \log \Big\{\frac{L(\beta^*, \bX) }{\varepsilon^2} \Big\}  + C_n(\beta^*, \mu_\beta, \Sigma_\beta).  
\end{eqnarray*}
\end{theorem}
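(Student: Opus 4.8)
The plan is to run the variational-risk-bound argument behind Theorem~\ref{thm:statopt} in the case $\alpha=1$, with two modifications. First, the $\alpha$-R\'enyi divergence degenerates at $\alpha=1$ and the deficiency factor $(1-\alpha)$ multiplying the left side of Theorem~\ref{thm:statopt} disappears, so letting $\alpha\uparrow1$ there is vacuous; instead we carry a free slack $\gamma\in(0,1)$ through the argument and pay for it using the self-concordance inequality $a(t+h)\ge a(t)+h\,a^{(1)}(t)+\r(|h|)a^{(2)}(t)/2$, which controls the discrepancy between the limiting R\'enyi divergence and $\mathrm{D}(\beta^\ast,\beta)$ by a quadratic form in $\beta-\beta^\ast$ governed by $W$, hence by $\kappa_2$. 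Second, because ${p}_{\l}(y\mid\beta,\bX,\xi)$ is only a minorant of $p(y\mid\beta,\bX)$, an extra term --- the minorization gap evaluated at the fixed point $\xi^\ast$ --- must be controlled, and this is what forces the eigenvalue hypotheses $\kappa_1,\kappa_2>0$ and the sample-size condition.

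\emph{Variational inequality.} By Lemma~\ref{eq:varsoln}, at a fixed point $\xi^\ast$ the Gaussian $q^\ast=\mathrm{N}_p(\mu(\xi^\ast),\Sigma(\xi^\ast))$ is exactly the conditional of $\beta$ under the working likelihood \eqref{eq:wm} with prior $\pi$, hence it globally maximizes $q\mapsto-\mathcal{L}(q,\xi^\ast)$ over $\mathcal{F}$. Evaluating at a test density $\tilde q$, using the pointwise bound ${p}_{\l}(y,\beta\mid\bX,\xi^\ast)\le p(y,\beta\mid\bX)$ on the $q^\ast$ side, and running the same ELBO-decomposition steps as in the proof of Lemma~\ref{eq:varsoln} gives
\[
\int q^\ast(\beta)\log\frac{p(y\mid\beta^\ast,\bX)}{p(y\mid\beta,\bX)}\,d\beta \;\le\; \int \tilde q(\beta)\log\frac{p(y\mid\beta^\ast,\bX)}{{p}_{\l}(y\mid\beta,\bX,\xi^\ast)}\,d\beta + \mathrm{D}(\tilde q\,\|\,\pi).
\]
I would take $\tilde q=\mathrm{N}_p(\beta^\ast,\rho^2 I_p)$ with $\rho^2\asymp\varepsilon^2/L(\beta^\ast,\bX)$; the Gaussian KL identity for $\mathrm{D}(\tilde q\,\|\,\pi)$ then reproduces exactly the $\tfrac{p}{n}\log\{L(\beta^\ast,\bX)/\varepsilon^2\}$ and $C_n(\beta^\ast,\mu_\beta,\Sigma_\beta)$ contributions after dividing by $n$.

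\emph{Concentration and the R\'enyi-to-KL conversion.} The left side above is the empirical log-likelihood ratio averaged over $q^\ast$; I would replace it, up to a slack, by its population analogue $n\int\mathrm{D}(\beta^\ast,\beta)q^\ast(\beta)d\beta$ via the Donsker--Varadhan/PAC-Bayes device: bound $\mathbb{E}_{\beta^\ast}\sup_q\exp\{\gamma n\int\mathrm{D}(\beta^\ast,\beta)q-\gamma\int\log\tfrac{p(y\mid\beta^\ast,\bX)}{p(y\mid\beta,\bX)}q-\mathrm{D}(q\,\|\,\pi)\}$ by $\int\exp\{\gamma n\mathrm{D}(\beta^\ast,\beta)-n(1-\gamma)\mathrm{D}_\gamma(\beta,\beta^\ast)\}\,\pi(d\beta)$ and apply Markov, producing the $1/\{(D-1)^2 n\varepsilon^2\}$ term exactly as in Theorem~\ref{thm:statopt}. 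The self-concordance bound now enters to show $\gamma\mathrm{D}(\beta^\ast,\beta)-(1-\gamma)\mathrm{D}_\gamma(\beta,\beta^\ast)\le \tfrac{\gamma}{2n}(\beta-\beta^\ast)^{\T}\bX^{\T}W\bX(\beta-\beta^\ast)$ (expand $\mathrm{D}_\gamma$ near $\gamma=1$ and use the quadratic lower bound on $a$), which keeps the integrand integrable on the event $\{\|\bX(\beta-\beta^\ast)\|^2\lesssim n\varepsilon^2\}$ --- the origin of the $e^{-n\kappa_2\varepsilon^2/2}$ term through $\kappa_2=\lambda_1(\bX^{\T}W\bX/n)$. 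This lets $\gamma\int\mathrm{D}(\beta^\ast,\beta)q^\ast$ appear on the left at the price of the extra $(D+\kappa_2/2)\varepsilon^2$ on the right. The $2e^{-n\gamma\epsilon/8}$ term comes from a sub-Gaussian bound on the score fluctuation $\bX^{\T}(y-\mathbb{E}_{\beta^\ast}y)$, paired by Cauchy--Schwarz with the localization radius $\epsilon=\kappa_1\varepsilon/(2\r_1\|\bX\|_\infty p^{1/2})$; this is where $\kappa_1>0$ and $n^{1/2}\ge p(\log p)^{1/2}\|\bX\|_\infty^2/(\kappa_1\surd2)$ are used.

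\emph{Main obstacle.} The genuinely delicate step is the remaining term $\int\tilde q(\beta)\log\{p(y\mid\beta,\bX)/{p}_{\l}(y\mid\beta,\bX,\xi^\ast)\}d\beta=\int\tilde q(\beta)\sum_i\{-\log(1+e^{\bx_i^{\T}\beta})-[A(\xi_i^\ast)(\bx_i^{\T}\beta)^2-\bx_i^{\T}\beta/2+C(\xi_i^\ast)]\}d\beta$, the tangent-transform minorization gap at the \emph{implicitly defined} fixed point $\xi^\ast$. Because the fixed-point equation \eqref{eq:fp} reads $(\xi_i^\ast)^2=\mathbb{E}_{q^\ast}[(\bx_i^{\T}\beta)^2]$ and the quadratic duality bound is tight exactly when $|\xi_i^\ast|=|\bx_i^{\T}\beta|$, this gap is a second-order quantity in $\|\bX(\beta-\beta^\ast)\|$ once one knows that $\mu(\xi^\ast)$ is close to $\beta^\ast$ (equivalently, each $\xi_i^\ast$ is close to $|\bx_i^{\T}\beta^\ast|$) and the eigenvalues of $\Sigma(\xi^\ast)$ are small. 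Establishing this a priori localization --- self-consistently with \eqref{eq:fp} and uniformly on the high-probability event above --- is the crux of the proof and the reason the $\alpha=1$ case needs the extra hypotheses; once it is in hand the gap is absorbed into the $\varepsilon^2$ terms, and the rest is routine Gaussian-KL algebra together with the Bernstein/Chernoff and Donsker--Varadhan estimates.
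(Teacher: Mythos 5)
Your high-level architecture (a variational inequality, a concentration/testing step that converts the empirical log-likelihood ratio into $n\,\mbox{D}(\beta^\ast,\beta)$ at the price of the factor $\gamma$, and a test density concentrated near $\beta^\ast$ to produce the prior-mass terms) matches the paper's. But the step you single out as the ``main obstacle'' --- controlling the tangent-transform minorization gap at the implicitly defined fixed point $\xi^\ast$, via an a priori localization of $\mu(\xi^\ast)$ near $\beta^\ast$ and of each $\xi_i^\ast$ near $|\bx_i^{\T}\beta^\ast|$ --- is both unresolved in your write-up and, more importantly, unnecessary. The resolution is already contained in Lemma \ref{eq:varsoln}: the pair $(q^\ast,\xi^\ast)$ minimizes $\mathcal{L}(q,\xi)$ \emph{jointly} over $\mathcal{F}\times\mathbb{R}^n$, so the right-hand side of the risk majorization equals $\inf_{q,\xi}\{-\int q\,\log\{{p}_{\l}(y\mid\beta,\xi,\bX)/p(y\mid\beta^\ast,\bX)\}+\mbox{D}(q\,\|\,\pi_\beta)\}$ as in \eqref{eq:riskmajor}, and one may substitute \emph{any} test pair $(\tilde q,\tilde\xi)$. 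The paper takes $\tilde\xi_i=\bx_i^{\T}\beta^\ast$ and $\tilde q$ the prior restricted to a KL ball; at that explicit choice the Jensen gap of the quadratic minorant is a second-order Taylor remainder in the variable $(\bx_i^{\T}\beta)^2-(\bx_i^{\T}\beta^\ast)^2$, bounded by $n\varepsilon^2$ whenever $\|\beta-\beta^\ast\|\le\varepsilon^2/L(\beta^\ast,\bX)$. Your version of the variational inequality freezes $\xi^\ast$ on the test-density side, which is exactly why you are left needing to localize the fixed point; no such localization is proved (or needed) in the paper, and I do not see how you would establish it without substantial additional assumptions on the design.

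A secondary gap: your proposed R\'enyi-to-KL conversion (``expand $\mbox{D}_\gamma$ near $\gamma=1$'' inside a Donsker--Varadhan exponential-moment bound) is not how the $\alpha=1$ case is handled, and I do not think it closes as stated: the exponential moment $\bbE_{\beta^\ast}\exp\{\gamma\log(p/p^\ast)\}$ is controlled by a R\'enyi divergence only for $\gamma<1$, and the bound degenerates as $\gamma\to1$, which is precisely why Theorem \ref{thm:statopt2} cannot be obtained as a limit of Theorem \ref{thm:statopt}. The paper instead proves Lemma \ref{lem:test} by a sieve/annulus decomposition with exponentially consistent likelihood-ratio tests over convex hulls of balls, combined with covering-number bounds and the event $\Omega_n$ controlling the score $\bX^{\T}(y-\bbE y)$; this is where the localization radius $\epsilon=\kappa_1\varepsilon/(2\r_1\|\bX\|_\infty p^{1/2})$, the sample-size condition, and the probability terms $2e^{-n\gamma\epsilon/8}$ and $e^{-n\kappa_2\varepsilon^2/2}$ actually arise. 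Your sketch names the right ingredients, but the testing construction is the substantive content of that step and is missing from the proposal.
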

The proof of Theorem \ref{thm:statopt2} can be found in \S \ref{ssec:statopt2} in the Appendix. 

\begin{remark}
To obtain a risk bound, we keep $\gamma$ to be a fixed number in $(0, 1)$ and 
set $\varepsilon = (p \log n /n)^{1/2}$. Then KL divergence risk is $p/n$ upto logarithmic terms which is again minimax optimal. As opposed to Theorem \ref{thm:statopt},    Theorem \ref{thm:statopt2} requires the eigen values of $\bX^{\T}W \bX/n$ to be bounded from below and the eigen values of $\bX^{\T} \bX/n$  to be bounded from above. 
\end{remark}

Now we conduct a numerical study to empirically support the conclusions of the theorems above. For fixed $(n,p)$
 we construct a $n\times p$ design matrix $\bX$ where $\bx^\T_i\ (i=1,\ldots, n)$ are independently drawn from  $\mbox{N}_p\big( 0,(0.5\,\mb{I}_p + 0.5\,\ind_p\ind_p^{\T}) \big)$. We then normalize each row of $\bX$ by $\surd{p}$. We fix $\beta^*$ to be $\set{-4,4,4,-4}$
and generate $y_i \sim \mbox{Bernoulli}(p_i)$ with $p_i = 1/ \{1 + \exp(-\bx^\T_i\beta^*)\}$, independently for $i \in \set{1,2,\ldots,n}$. We place a zero-mean Gaussian prior $\beta\sim \mbox{N}_p(0_p,\Sigma_{\beta})$ and set $\Sigma_{\beta} = 5^2\mb{I}_p$. 
Given a dataset $(y,\bX)$ and fixed $\alpha \in \{0.50, 0.65, 0.80, 0.95, 1.00\}$ we calculate the fixed point solution $\xi^*$ using \eqref{sec_TTA_eqn3}. We use $p(y\mid \mu(\xi^*),\bX)$ as the final estimated density and calculate the discrepency $\mbox{D}_{\alpha}(\mu(\xi^*),\beta^*)$ with $p(y\mid \beta^*,\bX)$. In panel (a) we plot $\mbox{D}_{\alpha}(\mu(\xi^*),\beta^*)$ for $\alpha \in \{0.50, 0.65, 0.80, 0.95\}$ along with  $\mbox{D}(\beta^*,\mu(\xi^*))$ that corresponds to $\alpha=1$. In panel (b) we plot the $\ell_2$ norm between $\mu(\xi^*)$ and $\beta^*$. We repeat this process for $500$ independent samples with $(n=100, p=4)$ and $(n=200, p=4)$. 
Clearly, increasing the sample size leads to improved estimation as seen from either panel of Figure \ref{opt_fig1}. 
Also, $\mbox{D}_{\alpha}(\mu(\xi^*),\beta^*)$ slightly increases as $\alpha$ increases to $1$. Since $\alpha$-R\'{e}nyi divergence counterbalances the effect of the misspecified likelihood and reinforces concentration around the truth, this behavior is expected.
\begin{figure}[htbp!]
    \centering
\begin{subfigure}{0.50\textwidth}
  \centering
  \includegraphics[scale= 0.225]{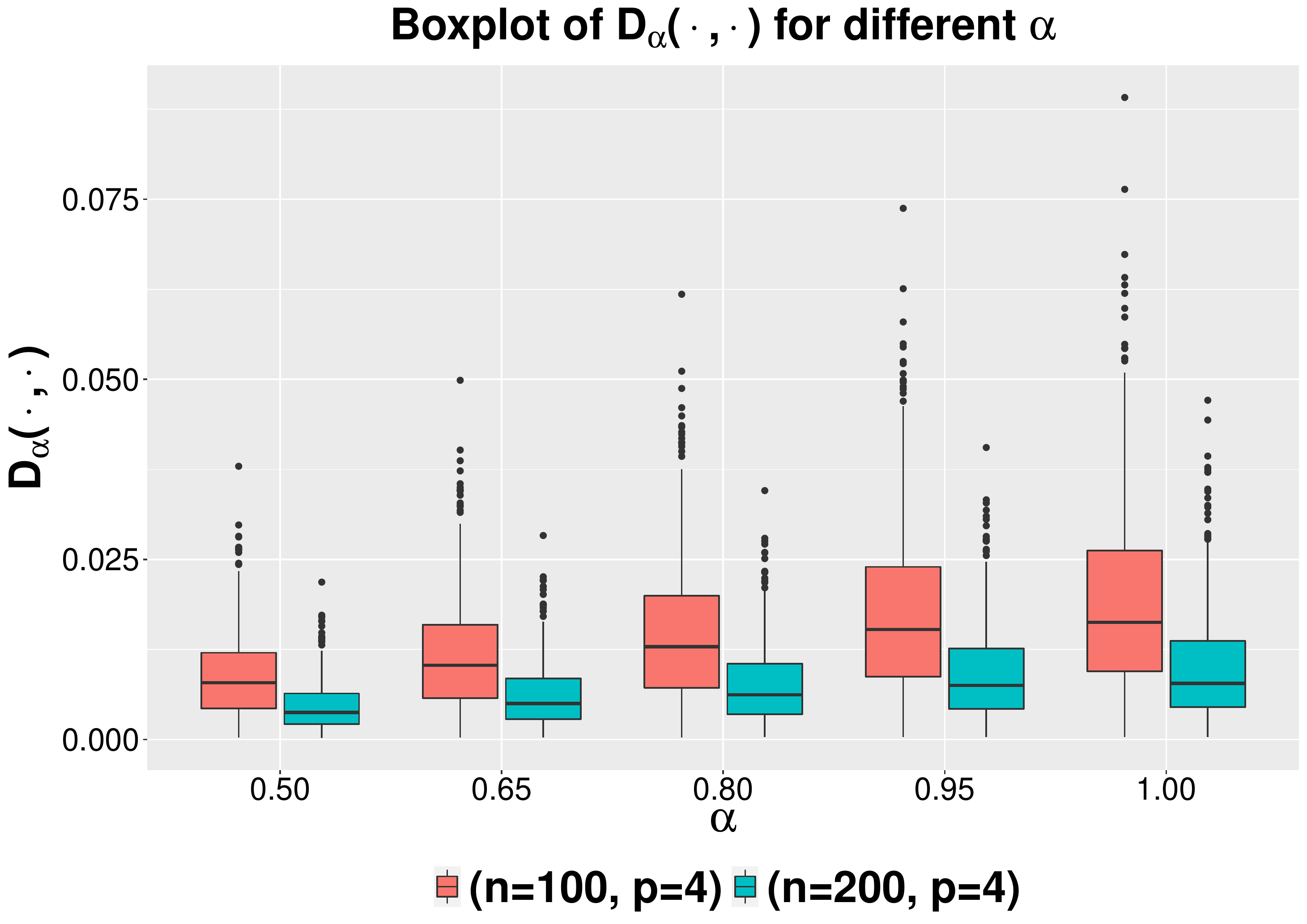}
  \caption{}
  \label{opt_fig1_a}
\end{subfigure}%
\begin{subfigure}{0.50\textwidth}
  \centering
  \includegraphics[scale= 0.225]{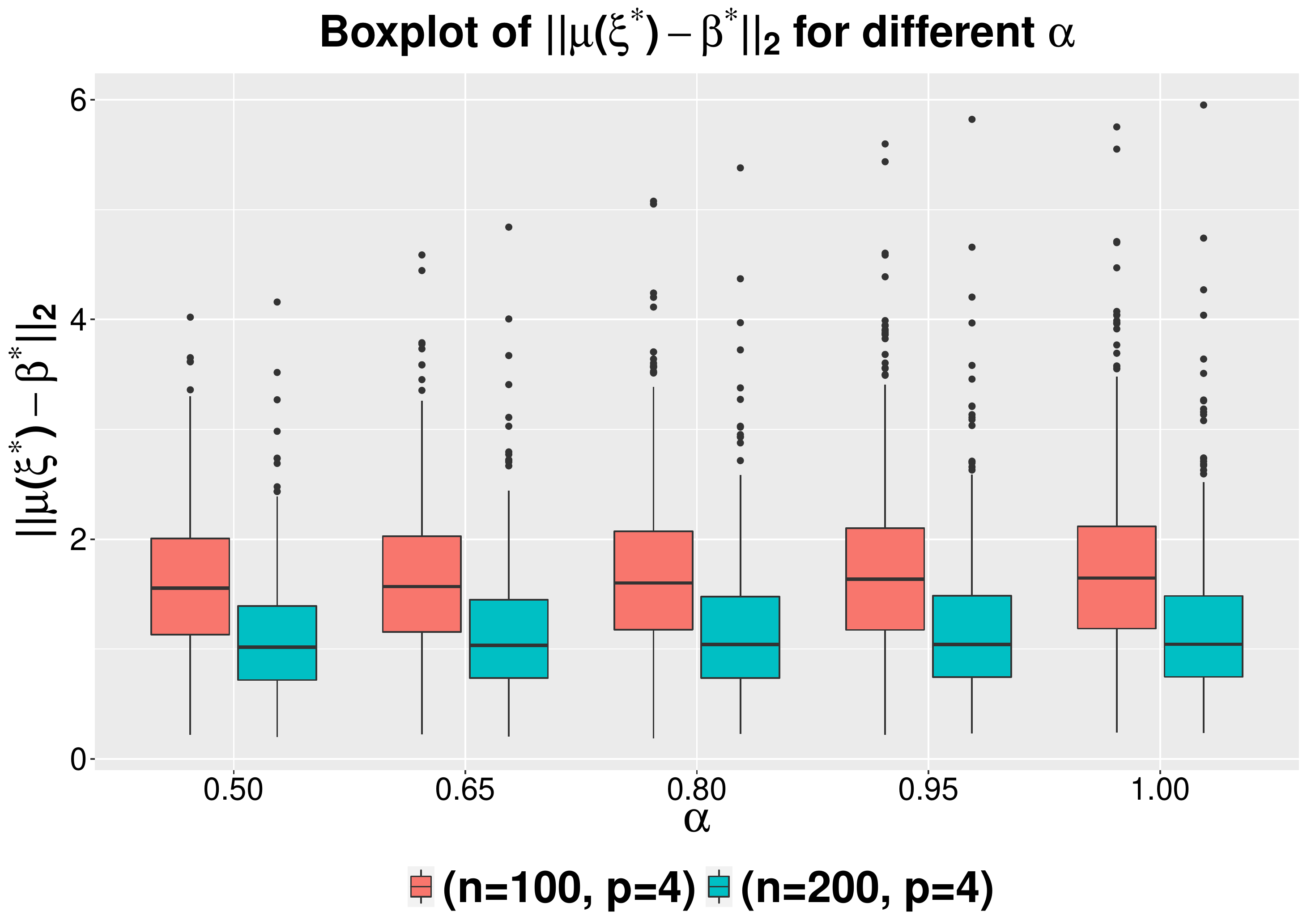}
  \caption{}
  \label{opt_fig1_b}
\end{subfigure}%
\caption{(a) Boxplot of $\mbox{D}_{\alpha}(\beta,\beta^*)$ for $\alpha\in (0,1)$ and $\mbox{D}(\beta^*,\beta)$ for $\alpha=1$ (b) Boxplot of $\|\mu(\xi^*)- \beta^*\|_2$ for different values of $\alpha \in (0,1]$}
\label{opt_fig1}
\end{figure}

In Figure \ref{opt_fig2} we show the contour plots of the marginal of $(\beta_2, \beta_4)$ obtained from the variational approximation 
$q^* =  \mbox{N}_p\{\mu(\xi^*), \Sigma({\xi^*})\}$   for a given dataset $(y,\bX)$. The upper and lower panels correspond to $(n=100 ,p=4)$ and $(n=200, p=4)$ respectively with different $\alpha\in\{0.80, 0.95, 1.00\}$. Clearly the concentration of the approximate posterior increases as $\alpha$ tends to 1. Also concentration increases with increase in the sample size. Further, the variational approximations appear to be almost similar for $\alpha = 0.95$ and $\alpha = 1$. 

\begin{figure}[htbp!]
    \centering
\begin{subfigure}{0.33\textwidth}
  \centering
  \includegraphics[scale= 0.155]{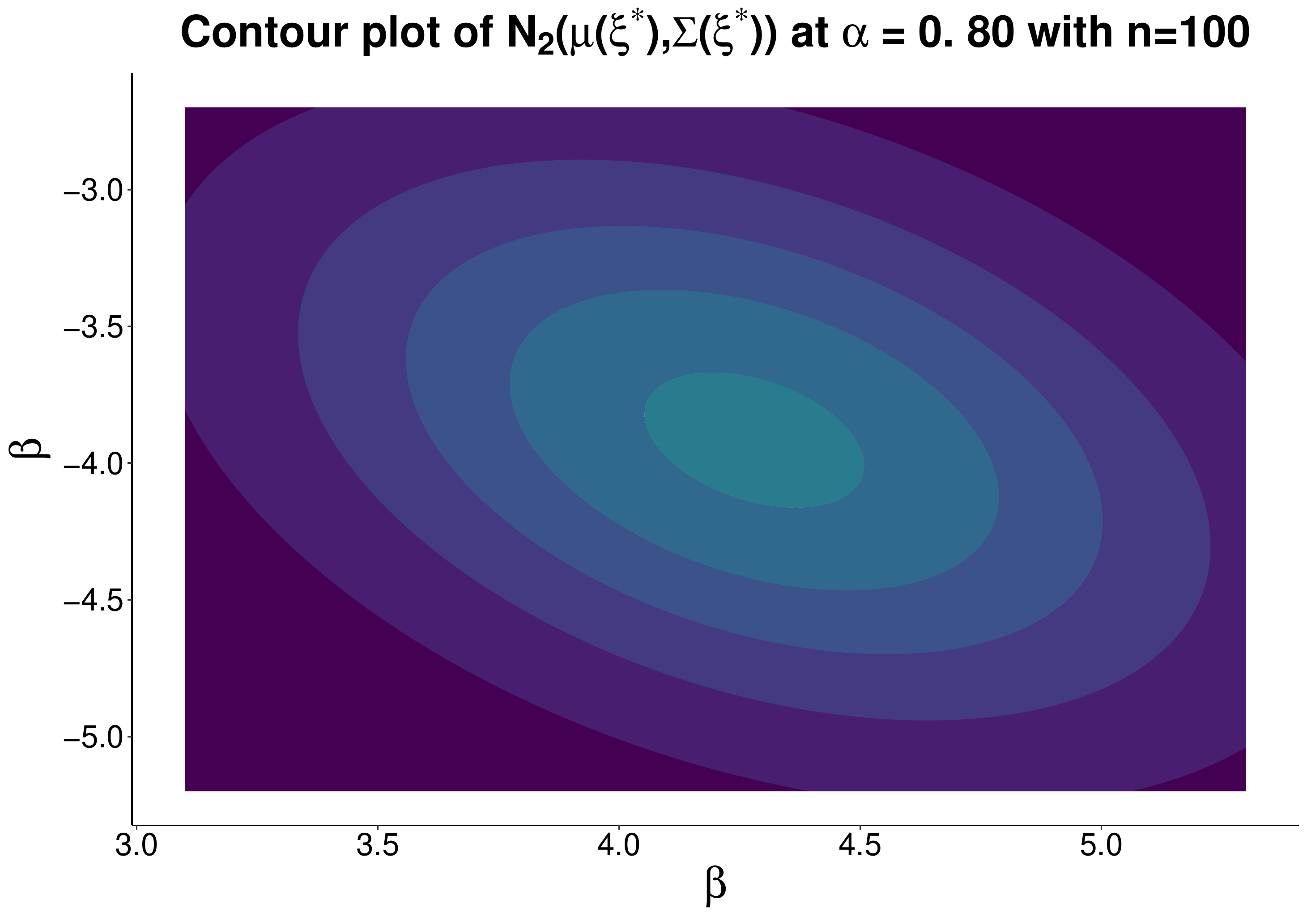}
  \caption{}
  \label{opt_fig2_a}
\end{subfigure}%
\begin{subfigure}{0.33\textwidth}
  \centering
  \includegraphics[scale= 0.155]{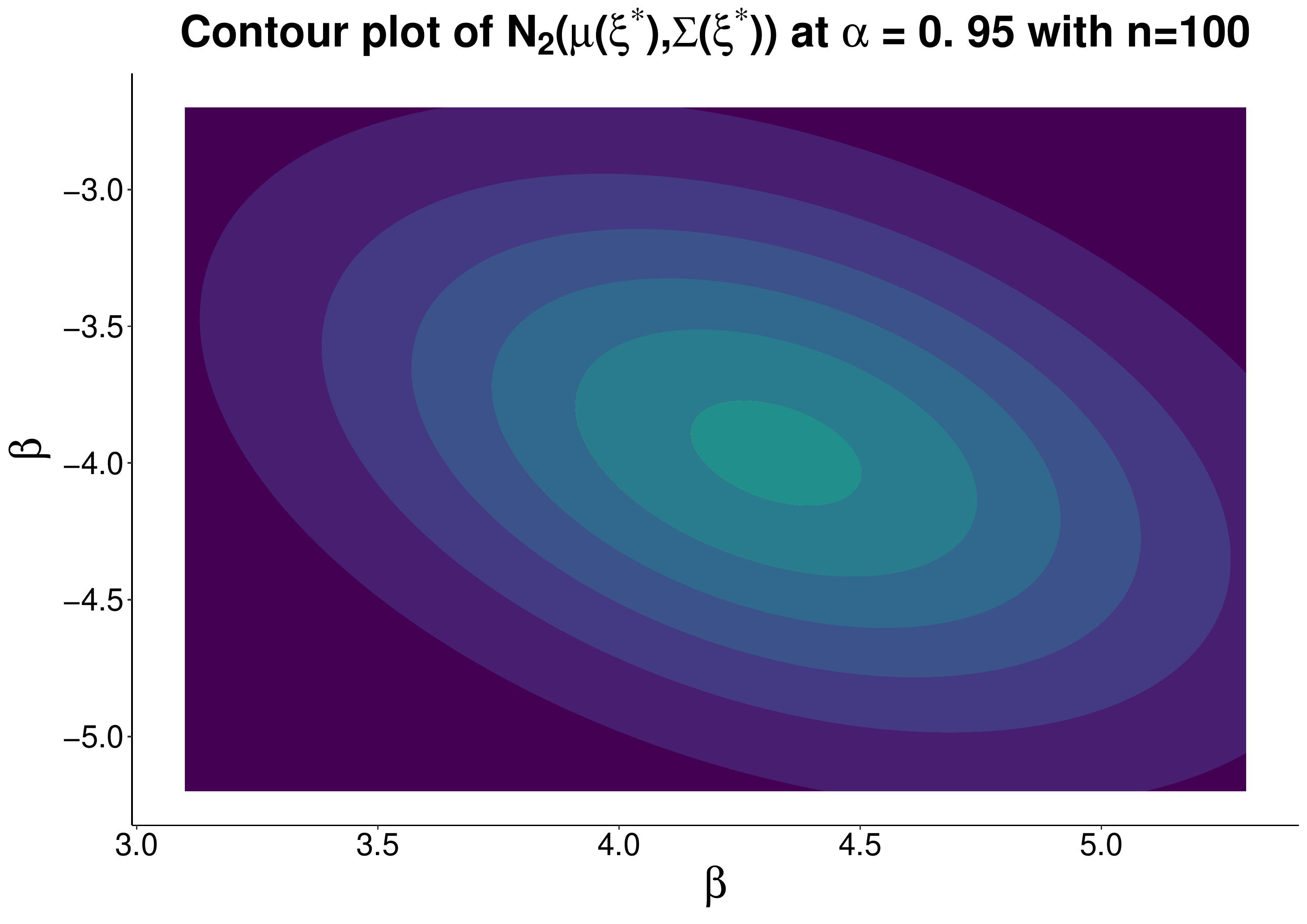}
  \caption{}
  \label{opt_fig2_b}
\end{subfigure}%
\begin{subfigure}{0.33\textwidth}
  \centering
  \includegraphics[scale= 0.155]{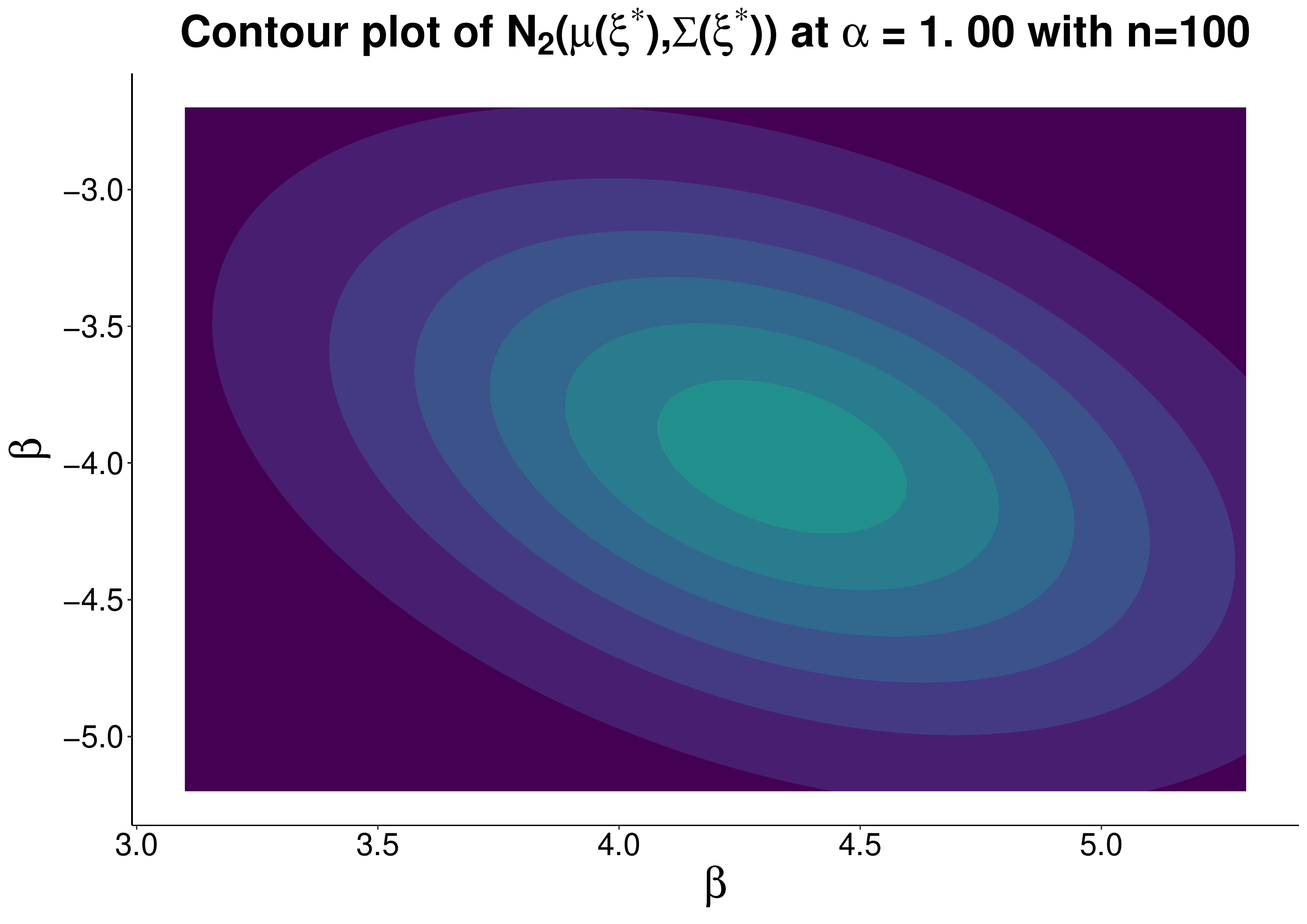}
  \caption{}
  \label{opt_fig2_c}
\end{subfigure}%
\\
\begin{subfigure}{0.33\textwidth}
  \centering
  \includegraphics[scale= 0.155]{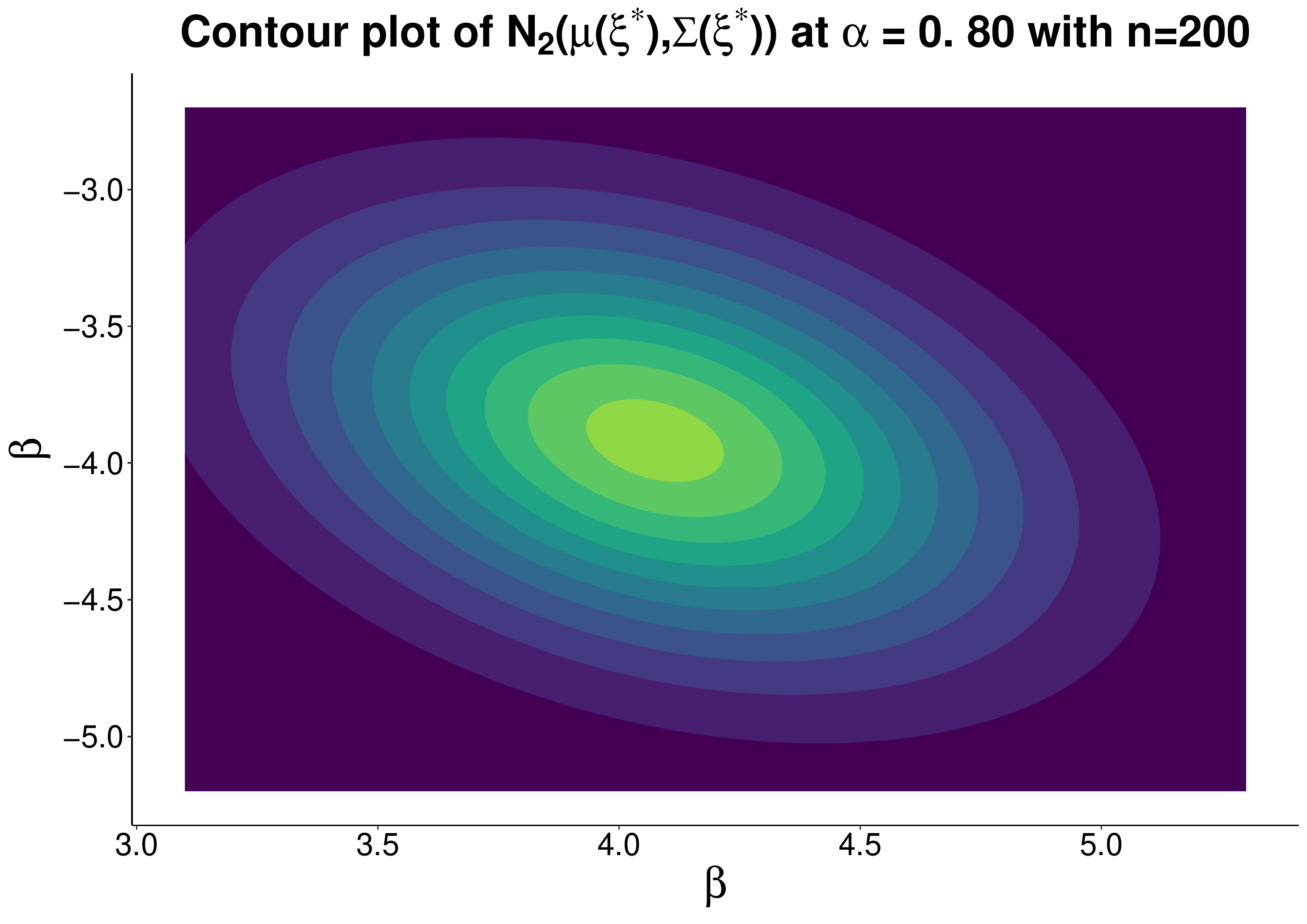}
  \caption{}
  \label{opt_fig2_d}
\end{subfigure}%
\begin{subfigure}{0.33\textwidth}
  \centering
  \includegraphics[scale= 0.155]{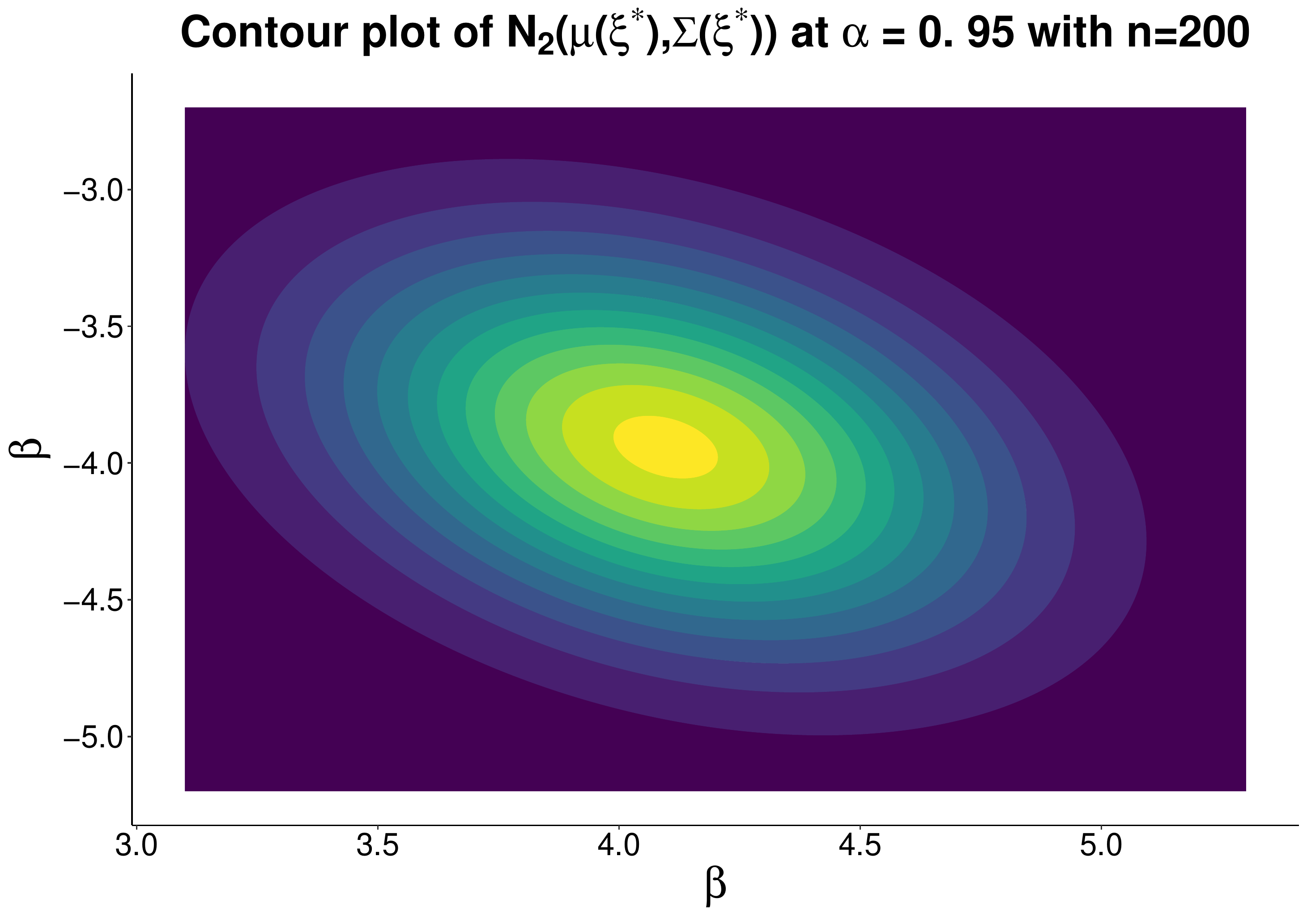}
  \caption{}
  \label{opt_fig2_e}
\end{subfigure}%
\begin{subfigure}{0.33\textwidth}
  \centering
  \includegraphics[scale= 0.155]{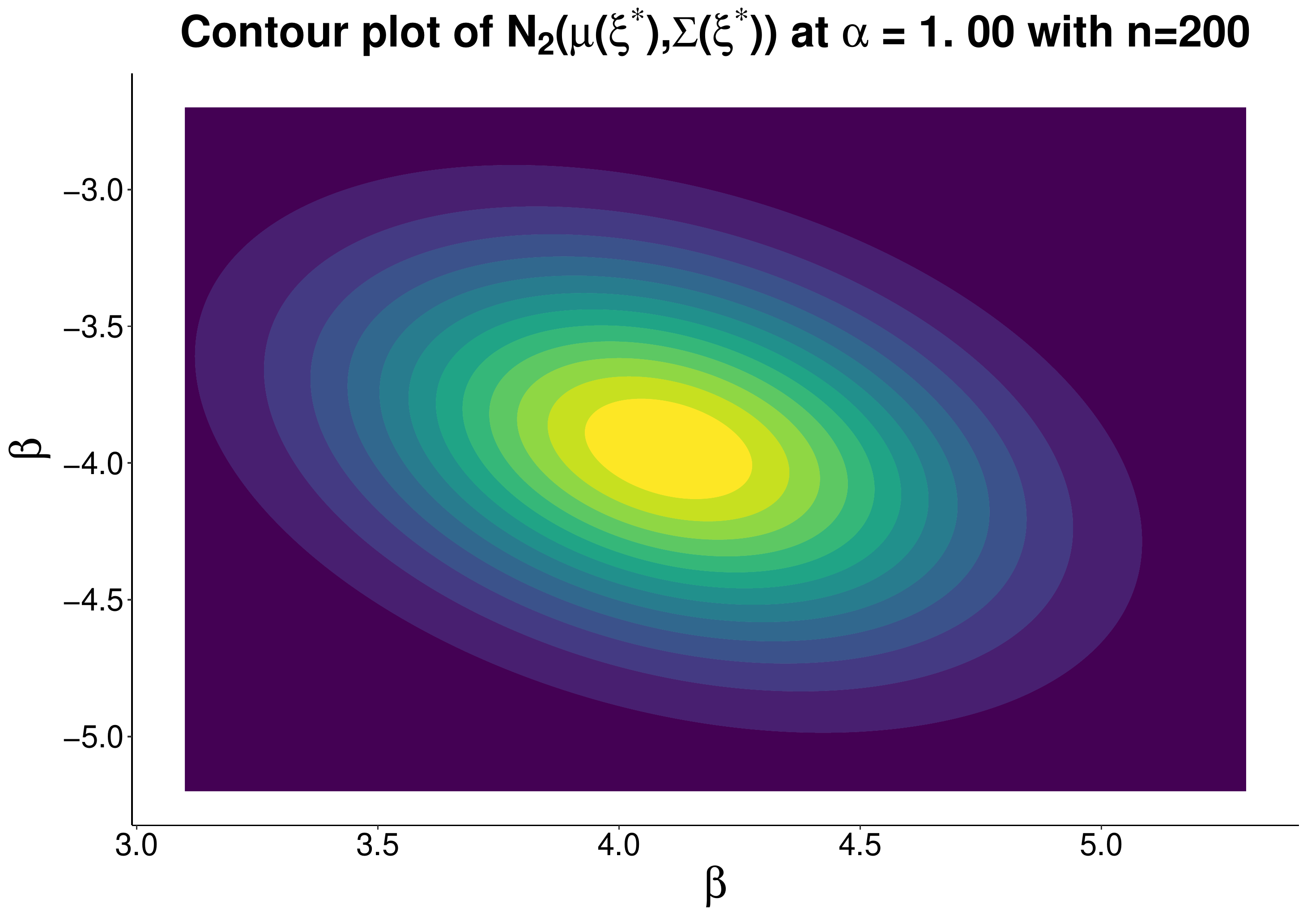}
  \caption{}
  \label{opt_fig2_f}
\end{subfigure}%
\caption{ Contour plot of posterior distribution of $(\beta_2,\beta_4)$. The upper row ((a)--(c)) corresponds to $(n=100, p=4)$ and the lower row ((d)--(f)) corresponds to $(n=200, p=4)$.}
\label{opt_fig2} 
\end{figure}

\section{Stability  and convergence of tangent transform algorithm} \label{sec:algoconv}
\subsection{Preliminaries}
We provide a brief review of stability of dynamical systems here; a more detailed review and relevant references can be found in \S \ref{sec:dyn} of the Appendix. Consider the following discrete-time autonomous system,
\begin{align}\label{sec_NOS_eq11}
    \psi^{t+1} = f(\psi^t), \quad t\in \mb{N}, 
\end{align}
where $f : \mb{R}^n \to \mb{R}^n$ (or, $f : \mb{D} \to \mb{R}^n, \mb{D} \subseteq \mb{R}^n$) is a twice continuously differentiable function. Any $\psi^* \in \mb{R}^n$ satisfying $\psi^* = f(\psi^*)$ is called a fixed point for this system.  A fixed point $\psi^*$ of \eqref{sec_NOS_eq11} is called {\em locally asymptotically stable} if given any $\epsilon > 0$,   there exists $\delta = \delta(\epsilon)$ such that whenever $\|\psi^0 - \psi^*\| < \delta$, we have $\|f(\psi^t) - \psi^*\| < \epsilon$ for all $t$ and $\lim_{t \to \infty} \|\psi^t - \psi^*\| = 0$. 

The following well-known result is instrumental to show that a fixed point is locally asymptotically stable. Denote by $\rho(\mathbf{J})$ the spectral radius of a square matrix $\mathbf{J}$, the largest eigenvalue of $\mathbf{J}$ in absolute value.
\begin{lemma}\label{sec_NOS_lemma21}
Let $\psi^*$ be a fixed point solution to the discrete-time autonomous system given by $\psi_{t+1} = f(\psi_t)$. Suppose, $f:\mb{D} \to \mb{R}^n (\mb{D} \subseteq \mb{R}^n)$ is a twice continuously differentiable function around a neighborhood $\mb{D}$ of $\psi^*$. Let $\mathbf{J} = [\partial_i f(\psi)/\partial \psi_j]_{\psi=\psi^*}$ be the Jacobian matrix of $f$ evaluated at $\psi^*$. Then, $\psi^*$ is locally asymptotically stable if $\rho(\mathbf{J})$ is less than $1$.  
\end{lemma}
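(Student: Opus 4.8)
The plan is to reduce the statement to the classical Lyapunov/Hartman--Grobman-type result for discrete dynamical systems via a direct $\varepsilon$--$\delta$ argument using a suitable matrix norm. First I would invoke the standard fact from linear algebra that for any square matrix $\mathbf{J}$ and any $\eta > 0$, there exists a vector norm $\|\cdot\|_\star$ on $\mb{R}^n$ (equivalently, an invertible change of basis) whose induced operator norm satisfies $\|\mathbf{J}\|_\star \le \rho(\mathbf{J}) + \eta$. Since $\rho(\mathbf{J}) < 1$ by hypothesis, choose $\eta$ small enough that $r := \rho(\mathbf{J}) + \eta < 1$; fix this norm $\|\cdot\|_\star$ for the remainder of the argument. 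All the $\epsilon$--$\delta$ bookkeeping in the definition of local asymptotic stability is with respect to the Euclidean norm, but since all norms on $\mb{R}^n$ are equivalent, it suffices to prove the statement in $\|\cdot\|_\star$ and then transfer back with the norm-equivalence constants.

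Next I would Taylor-expand $f$ about the fixed point. Writing $e^t := \psi^t - \psi^*$ and using $f(\psi^*) = \psi^*$, twice continuous differentiability of $f$ on the neighborhood $\mb{D}$ gives
\begin{equation*}
e^{t+1} = f(\psi^* + e^t) - \psi^* = \mathbf{J}\, e^t + R(e^t), \qquad \|R(e^t)\|_\star = o(\|e^t\|_\star) \text{ as } e^t \to 0.
\end{equation*}
Concretely, pick $\delta_0 > 0$ small enough that the ball $B_\star(\psi^*, \delta_0) \subseteq \mb{D}$ and so that $\|R(v)\|_\star \le \tfrac{1-r}{2}\|v\|_\star$ for all $\|v\|_\star \le \delta_0$ (possible by the remainder estimate). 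Then for $\|e^t\|_\star \le \delta_0$,
\begin{equation*}
\|e^{t+1}\|_\star \le \|\mathbf{J}\|_\star \|e^t\|_\star + \|R(e^t)\|_\star \le \Big( r + \tfrac{1-r}{2}\Big)\|e^t\|_\star = \tfrac{1+r}{2}\,\|e^t\|_\star =: \rho\,\|e^t\|_\star,
\end{equation*}
with $\rho = (1+r)/2 \in (0,1)$. In particular the iterate stays in $B_\star(\psi^*, \delta_0)$, so this contraction estimate propagates: by induction $\|e^t\|_\star \le \rho^t \|e^0\|_\star$ whenever $\|e^0\|_\star \le \delta_0$, which gives both Lyapunov stability (given $\epsilon$, take $\delta = \min(\delta_0, \epsilon)$ in $\|\cdot\|_\star$) and $\lim_{t\to\infty}\|e^t\|_\star = 0$. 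Converting $\delta_0$ and the target $\epsilon$ between $\|\cdot\|_\star$ and the Euclidean norm via the equivalence constants completes the proof.

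The only genuinely non-routine ingredient is the existence of the norm $\|\cdot\|_\star$ with $\|\mathbf{J}\|_\star$ arbitrarily close to $\rho(\mathbf{J})$; everything else is a standard contraction-mapping iteration. I would either cite this (it is classical — e.g., via the Jordan form, rescaling the off-diagonal entries of each Jordan block) or, since the lemma is stated as ``well-known'' and a fuller treatment is deferred to the appendix, simply reference the appendix discussion of dynamical systems. If one wished to avoid the change-of-norm device entirely, an alternative is to bound $\|\mathbf{J}^k\|$ directly: $\rho(\mathbf{J}) < 1$ implies $\|\mathbf{J}^k\| \le C r^k$ for some $C$ and $r < 1$ by Gelfand's formula, and then run essentially the same telescoping argument on $e^{t}$ in blocks of length $k$; this is slightly messier but uses only the Euclidean norm. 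Either route is short, so the main ``obstacle'' is purely expository — deciding how much of this standard material to reproduce versus cite.
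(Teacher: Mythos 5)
Your proposal is correct and, in fact, more complete than what the paper provides: the paper does not prove this lemma at all. It states it as ``well-known,'' and in the appendix (Lemma \ref{sec_NOS_lemma2} together with the Hartman--Grobman discussion) it only gives a heuristic linearization $(\psi^{t+1}-\psi^*) \approx \mathbf{J}(\psi^t-\psi^*)$, works out the purely linear system in the special case where $\mathbf{J}$ has a complete set of eigenvectors, gestures at the Jordan canonical form for the general case, and defers the actual proof to the cited textbooks. Your argument --- choose an adapted vector norm with induced operator norm $\|\mathbf{J}\|_\star \le \rho(\mathbf{J})+\eta < 1$, Taylor-expand about $\psi^*$ to isolate an $o(\|e^t\|_\star)$ remainder, shrink the neighborhood so the remainder is dominated by $\tfrac{1-r}{2}\|e^t\|_\star$, and run the resulting contraction --- is the standard rigorous proof and is sound; the induction step correctly notes that the iterates remain in the ball, which is the one place such arguments usually go wrong. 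Two cosmetic remarks: $C^1$ already suffices for the $o(\|e\|)$ remainder (the paper's $C^2$ hypothesis is stronger than needed), and your Gelfand-formula alternative is exactly the route one would take to avoid introducing the auxiliary norm. Given that the paper treats this as citable background, either writing out your short proof or simply matching the paper's citation to \citet{wiggins2003introduction} would be acceptable.
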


\subsection{Asymptotic stability of tangent transform EM}\label{sec:stabilityTT}
In this subsection, we study the EM sequence of $\xi$ from equation \eqref{sec_TTA_eqn3} viewed as a discrete time dynamical system in $\xi^2$. As noted above, the convergence and stability aspects of the system depends crucially on the properties of the Jacobian of the map. Since the function $A(\xi)=  -{\tanh(\xi/2)}/{4\xi}$ is symmetric around $0$, and $\Sigma_{\alpha}(\xi)$ and $\mu_{\alpha}(\xi)$ are dependent on $\xi$ through $A(\cdot)$, only the magnitude of $\xi$ is relevant and hence we will discuss the nature of EM iterates on $\mb{R}^+$. The properties of the function $A(\cdot)$ play a crucial role in such an analysis. In Proposition \ref{sec_TTA_prop1} in the Appendix, we in particular show that the function $A: \mb{R}^+ \to \mb{R}^-$ is monotonically increasing and twice continuously differentiable with $A(0)=-1/8$ and $A(\xi) + \xi A^{\prime}(\xi) < 0$ for all $\xi\in \mb{R}^+$. 

In Theorem \ref{main_VB_1} below, we show that the EM sequence in equation \eqref{sec_TTA_eqn3} is locally asymptotically stable. 
\begin{theorem}\label{main_VB_1}
Suppose the design matrix $\bX$ does not have any row equal to the zero vector. For any $\alpha \in (0, 1]$ and positive definite $\Sigma_\beta$, any fixed point solution 
$\xi^*$ of the EM sequence in \eqref{sec_TTA_eqn3}
is locally asymptotically stable. 
\end{theorem}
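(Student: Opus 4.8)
The plan is to apply Lemma \ref{sec_NOS_lemma21}: it suffices to show that the map $g : \mathbb{R}_{+}^n \to \mathbb{R}_{+}^n$ defined by $g(\eta)_i = [\bX\{\Sigma_\alpha(\sqrt{\eta})/\alpha + \mu_\alpha(\sqrt{\eta})\mu_\alpha^{\T}(\sqrt{\eta})\}\bX^{\T}]_{ii}$, whose fixed point is $\eta^* = (\xi^*)^2$, has a Jacobian at $\eta^*$ with spectral radius strictly less than $1$. (Working in the variable $\eta = \xi^2$ is natural since $A(\cdot)$, $\Sigma_\alpha$, $\mu_\alpha$ depend on $\xi$ only through $\xi^2$ via $A$, so $g$ is genuinely a smooth function of $\eta$ on $\mathbb{R}_+^n$; the no-zero-row hypothesis keeps $\eta^* $ in the interior $\mathbb{R}_+^n$, so $g$ is twice continuously differentiable in a neighborhood of $\eta^*$, using the properties of $A$ from Proposition \ref{sec_TTA_prop1}.) First I would compute $\partial g_i / \partial \eta_j$ explicitly. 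Writing $S(\eta) = \Sigma_\alpha/\alpha + \mu_\alpha \mu_\alpha^{\T}$ and differentiating, each entry of $S$ depends on $\eta_j$ only through $A(\xi_j)$, and $\partial A(\xi_j)/\partial \eta_j = A'(\xi_j)/(2\xi_j)$; a Sherman--Morrison / matrix-derivative computation gives $\partial \Sigma_\alpha/\partial A_j = 2\,\Sigma_\alpha \bX^{\T} e_j e_j^{\T} \bX \Sigma_\alpha$ and a corresponding expression for $\partial(\mu_\alpha\mu_\alpha^{\T})/\partial A_j$, so that the Jacobian factors through the matrix $M$ with $M_{ij} = (\bx_i^{\T} \Sigma_\alpha \bx_j)^2 + (\text{cross terms involving } \mu_\alpha)$, scaled by $A'(\xi_j)/(2\xi_j)$.

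The key structural observation I would aim to exploit is that at the fixed point there is an identity linking $\eta^*_i$ to the row sums or diagonal entries of this Jacobian: $\eta^*_i = (\bx_i^{\T}\Sigma_\alpha\bx_i)/\alpha + (\bx_i^{\T}\mu_\alpha)^2 = S_{ii}^{(\bX)}$, and crucially the quantity $A(\xi) + \xi A'(\xi) < 0$ (Proposition \ref{sec_TTA_prop1}) controls the sign and size of the diagonal feedback term $\partial g_i/\partial \eta_i$. I would try to show the Jacobian $\mathbf{J}$ at $\eta^*$ is (up to a positive diagonal similarity) a nonnegative matrix, so that $\rho(\mathbf{J})$ is bounded by a weighted maximum row sum; then the bound $\sum_j J_{ij} < 1$ for each $i$ would follow from a Cauchy--Schwarz argument on the $(\bx_i^{\T}\Sigma_\alpha \bx_j)^2$ terms together with the strict inequality $A(\xi)+\xi A'(\xi)<0$, which forces the self-reinforcing term to be strictly dominated. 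Concretely, one wants an inequality of the form $\sum_j M_{ij}\,\lvert A'(\xi_j)\rvert/(2\xi_j) < \eta_i^* / (\text{something} \ge \eta_i^*)$, i.e., the map is a strict contraction in the appropriate weighted $\ell_\infty$ sense at the fixed point.

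The main obstacle, I expect, is establishing that strict row-sum bound uniformly without any assumption on $\bX$, $p$, $n$, or sparsity — in particular handling the rank-one $\mu_\alpha\mu_\alpha^{\T}$ contribution, which is not obviously sign-definite and could in principle inflate a row sum. I anticipate the resolution uses the precise EM fixed-point relation \eqref{eq:fp} to rewrite $\mu_\alpha(\xi^*)$-dependent terms, cancelling the dangerous pieces, combined with the elementary but essential fact that $t \mapsto -\tanh(t/2)/(4t)$ together with $A + \xi A' < 0$ yields $\lvert A'(\xi)\rvert/(2\xi) < \lvert A(\xi)\rvert/\xi^2$ — i.e., the "gain" of the feedback at each coordinate is strictly less than what would be needed for an eigenvalue to reach $1$. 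A secondary technical point is justifying differentiability and the interior-ness of $\eta^*$ at coordinates where $\xi_j^*$ could be small; here $A'(\xi)/(2\xi) \to A''(0)/2$ has a finite limit (from the twice-continuous-differentiability in Proposition \ref{sec_TTA_prop1}), so the Jacobian entries extend continuously and the argument goes through, but this limiting behavior must be checked rather than assumed.
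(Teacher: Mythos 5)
Your setup matches the paper's: you linearize the iteration in the variable $\eta=\xi^2$, invoke Lemma \ref{sec_NOS_lemma21}, note that the no-zero-row hypothesis forces $\xi^*_i>0$ so the map is smooth near the fixed point, compute the Jacobian $\mathbf{J}_{\alpha}=\big[\bX\Sigma_{\alpha}(\xi^*)\bX^{\T}\circ\bX\{\Sigma_{\alpha}(\xi^*)/\alpha+2\mu_{\alpha}(\xi^*)\mu_{\alpha}^{\T}(\xi^*)\}\bX^{\T}\big]\mathrm{D}$ with $\mathrm{D}=\mathrm{diag}\{A'(\xi^*)/\xi^*\}$, and correctly identify both the fixed-point normalization $(\xi_i^*)^2=[\bX\Lambda_{\alpha}\bX^{\T}]_{ii}$ and the inequality $A(\xi)+\xi A'(\xi)<0$ as the engines of the bound. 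The divergence, and the gap, is in how you propose to bound $\rho(\mathbf{J}_{\alpha})$.

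Your plan hinges on the claim that $\mathbf{J}_{\alpha}$ is, up to a positive diagonal similarity, an entrywise nonnegative matrix, so that the spectral radius can be controlled by a weighted maximum row sum. That claim is false in general. Writing $a_{ij}=\bx_i^{\T}\Sigma_{\alpha}\bx_j$ and $m_i=\bx_i^{\T}\mu_{\alpha}$, the $(i,j)$ entry of $\mathbf{J}_{\alpha}$ is proportional to $a_{ij}^2/\alpha+2a_{ij}m_im_j$, which is negative whenever $a_{ij}m_im_j<0$ and $|a_{ij}|<2\alpha|m_im_j|$; a positive diagonal similarity preserves the signs of entries, so no such conjugation can repair this. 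You have implicitly conflated positive semi-definiteness (which the symmetrized matrix does enjoy, by the Schur product theorem, \emph{despite} having negative entries) with entrywise nonnegativity (which a Perron--Frobenius row-sum bound requires). Falling back on $\rho(\mathbf{J}_{\alpha})\le\max_i\sum_j|[\mathbf{J}_{\alpha}]_{ij}|$ destroys exactly the cancellation in the rank-one $\mu_{\alpha}\mu_{\alpha}^{\T}$ piece that you were counting on, and there is no reason this absolute row sum is below one without assumptions on $\bX$. The paper's resolution is spectral rather than combinatorial: conjugate by $\mathrm{D}^{1/2}$ to get a symmetric PSD matrix $\tilde{\mathbf{J}}_{\alpha}$ (so $\rho=\lambda_1$), drop a PSD term to reduce to $2\,\mathrm{D}^{1/2}[\bX\Sigma_{\alpha}\bX^{\T}\circ\bX\Lambda_{\alpha}\bX^{\T}]\mathrm{D}^{1/2}$, use the fixed-point equation to factor $\bX\Lambda_{\alpha}\bX^{\T}=\Delta(\xi^*)\circ\Gamma_{\alpha}$ with $\Gamma_{\alpha}$ having unit diagonal, apply the Hadamard eigenvalue inequality $\lambda_1(A\circ B)\le\max_i A_{ii}\,\lambda_1(B)$ (Lemma \ref{sec_ev_3}), and finish by showing $\lambda_1\big(2\Sigma_{\alpha}\bX^{\T}\mathrm{diag}\{\xi^*A'(\xi^*)\}\bX\big)<1$, which is precisely the Loewner-order statement $2\bX^{\T}\mathrm{diag}\{\xi^*A'(\xi^*)\}\bX\prec\Sigma_{\alpha}^{-1}(\xi^*)$ implied by $A+\xi A'<0$. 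Your scalar inequality $A'(\xi)/(2\xi)<-A(\xi)/\xi^2$ is true but is not the form in which the fact gets used; the operator-norm route through the Hadamard product is what makes the argument assumption-free.
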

In light of  Lemma \ref{sec_NOS_lemma21}, one needs to check the spectral radius of the Jacobian of the system at the fixed point to prove Theorem \ref{main_VB_1}. We present an outline of the proof here; refer to \S \ref{appendix_thm2_proof} in the Appendix for a complete proof. Given positive semi-definite matrices $A, B$ of the same dimension, we follow the usual convention to denote $B \prec A$ (resp. $B \precsim A$) to mean $(A-B)$ is positive definite (resp. positive semi-definite). 

Since $\Sigma_{\alpha}(\xi^*) = [\Sigma^{-1}_{\beta}/{\alpha} - 2 \bX^\T\mbox{diag}\{A(\xi^*)\}\bX]^{-1}$ is positive definite and 
$\bx^\T_i \neq 0\ (i = 1,2,\ldots,n)$, one can conclude $\xi_i^*>0$ from \eqref{eq:fp}.
Next, we show that the Jacobian matrix evaluated at the fixed point $\xi^*$ can be analytically expressed as 
\begin{eqnarray}\label{eq:Jacobian}
\mathbf{J}_{\alpha} =\left[\bX\Sigma_{\alpha}\left({\xi^*}\right)\bX^{\T} \circ \bX\left\{ \Sigma_{\alpha}\left(\xi^*\right)/{\alpha} + 2\mu_{\alpha}\left(\xi^*\right)\mu^{\T}_{\alpha}\left(\xi^*\right)\right\}\bX^{\T}\right] \mathrm{D},
\end{eqnarray}
where $\circ$ denotes the Hadamard (or, elementwise) product, $\mu_{\alpha}(\xi), \Sigma_{\alpha}({\xi})$ are defined in \eqref{eq:muxisigxi}, and $\mathrm{D} = \text{diag}\left\{{A^{\prime}\left(\xi^*\right)}/{\xi^*}\right\}$, with the $\slash$ operation  interpreted elementwise. By similarity, $\mathbf{J}_{\alpha}$ and
\be 
\tilde{\mathbf{J}}_{\alpha} = \D^{1/2}\left[\bX \Sigma_{\alpha}(\xi^*)\bX^{\T} \circ \bX\left\{ \Sigma_{\alpha}(\xi^*)/{\alpha} + 2\mu_{\alpha}(\xi^*)\,\mu^{\T}_{\alpha}(\xi^*)\right\}\bX^{\T}\right]\D^{1/2},
\ee 
have the same set of eigenvalues. Clearly, $\tilde{\mathbf{J}}_{\alpha}$ is real symmetric and positive semi-definite by the Schur product theorem. Therefore, $\tilde{\mathbf{J}}_{\alpha}$, and hence $\mathbf{J}_{\alpha}$, have non-negative eigenvalues. Hence, the spectral radius $\rho(\mathbf{J}_{\alpha})$ is simply the largest eigenvalue of $\mathbf{J}_{\alpha}$, which we proceed to bound next. 

Using the fact that, $\D^{1/2}[\bX \Sigma_{\alpha}(\xi^*)\bX^{\T}\circ \bX \Sigma_{\alpha}(\xi^*)\bX^{\T}]\D^{1/2}/{\alpha}$ is a positive semi-definite matrix, we have,
 \begin{align}\label{sec_MT_proof_1}
     \tilde{\mathbf{J}}_{\alpha} \precsim 2\,\D^{1/2}\left[\bX \Sigma_{\alpha}(\xi^*)\bX^{\T} \circ \bX\left\{ \Sigma_{\alpha}(\xi^*)/{\alpha} + \mu_{\alpha}(\xi^*)\,\mu_{\alpha}^{\T}(\xi^*)\right\}\bX^{\T}\right]\D^{1/2},
 \end{align}
Denote  $\Lambda_{\alpha}  =  \Sigma_{\alpha}(\xi^*)/{\alpha} + \mu_{\alpha}(\xi^*)\,\mu^\T_{\alpha}(\xi^*)$ and $\bX\Lambda_{\alpha} \bX^{\T} = \Delta(\xi^*)\circ \Gamma_{\alpha}$ where $[\Delta(\xi^*)]_{ij} = \xi^*_i\xi^*_j$, $[\Gamma_{\alpha}]_{ij} = \bx^\T_i \Lambda_{\alpha}  \bx_j/(\xi^*_i\xi^*_j)$. Then the matrix on the right hand side of the \eqref{sec_MT_proof_1} can be written as, 
$$2\, \D^{1/2}\set{\bX \Sigma_{\alpha}(\xi^*)\bX^{\T} \circ \Delta(\xi^*)} \D^{1/2}\circ \Gamma_{\alpha}.$$
A result from \cite{horn1994topics}, stated in Lemma \ref{sec_ev_3} in the Appendix, provides bounds on the largest eigenvalues of $\mbox{M} \circ \mbox{N}$ as a product of the largest eigenvalue of $\mbox{M}$ and largest diagonal of the $\mbox{N}$. The diagonals of $\Gamma_{\alpha}$ are $1$ and the largest eigenvalue of $2\, \D^{1/2}\set{\bX \Sigma_{\alpha}(\xi^*)\bX^{\T} \circ \Delta(\xi^*)} \D^{1/2} = 2\, \mbox{diag}[\{\xi^*\, A^{\prime}(\xi^*)\}^{1/2}] \bX \Sigma_{\alpha}(\xi^*)\bX^{\T} \mbox{diag}[\{\xi^*\, A^{\prime}(\xi^*)\}^{1/2}]$ is the same as that of $2\,\Sigma_{\alpha}(\xi^*)\bX^{\T} \mbox{diag}\{\xi^*\, A^{\prime}(\xi^*)\}\bX$. Since $A(x) + x A^{\prime}(x) < 0$ for all $x\in\mb{R}$, $ 2\,\bX^{\T} \mbox{diag}\{\xi^*\, A^{\prime}(\xi^*)\}\bX \prec  \Sigma^{-1}_{\alpha}(\xi^*)$. Lemma \ref{sec_ev_4} shows that the largest eigenvalue of $\mbox{M}^{-1/2}\,\mbox{N}\,\mbox{M}^{-1/2}$ is strictly less than 1 where $\mbox{N}\prec \mbox{M}$ and $\mbox{M}, \mbox{N}$ are positive definite and positive semi-definite matrices respectively. This delivers the proof that $\rho(\mathbf{J}_{\alpha})< 1$.

In the special case when $p = 1$, we can make substantial simplifications and show that (see \S \ref{appendix_specialcase} in the Appendix for details),
\begin{align}\label{sec_mainthm_1}
    \rho(\mathbf{J}_{\alpha}) &= \frac{ 2 \sum^n_i x^2_i A^{\prime}(\xi^*_i) \xi^*_i}{\{\sigma_{\beta}^{-2}/\alpha- \sum^n_{i=1} 2x^2_i A(\xi^*_i)\}} 
 - \frac{ \sum^n_i x^4_i A^{\prime}(\xi^*_i)/ \xi^*_i}{\alpha\,\{\sigma_{\beta}^{-2}/\alpha - \sum^n_{i=1} 2x^2_i A(\xi^*_i)\}^2},\\
 &<\frac{ 2 \sum^n_{i=1} x^2_i A^{\prime}(\xi^*_i) \xi^*_i}{\sigma_{\beta}^{-2}/\alpha - \sum^n_{i=1} 2x^2_i A(\xi^*_i)}< 1, \nonumber
\end{align}
where the first inequality follows from the fact that the second term in \eqref{sec_mainthm_1} is positive as $A^{\prime}(x)/x >0$ for all $x \in \mb{R}$. The second inequality follows from the fact that $A(x) + xA^{\prime}(x)<0$ for all  $x\in \mb{R}$.  

It is important to note that Theorem \ref{main_VB_1} places minimal restriction on the design matrix $\bX$ and its conclusion remains true for any $p$ and $n$. We conduct a replicated numerical study to empirically demonstrate some of these features. 
We use the same simulation design corresponding to Figure \ref{opt_fig1} except now for a fixed $(n, p)$, we provide a sufficiently flat prior $\Sigma_{\beta} = 10^2 \mb{I}_p$ while fixing the first $p/2$ (resp.  $[p/2]+1$) entries of $\beta^*$ to be $-4$, and the remaining $p/2$ (resp. $[p/2]$) to be $4$ when $p$ is even (resp. odd). To remain faithful to the assumptions of Theorem \ref{main_VB_1}, we do not normalize $\bX$ with $\surd{p}$. 
We compute the spectral radius $\rho \equiv \rho(\mathbf{J}_{\alpha})$ of the Jacobian matrix $\mathbf{J}_{\alpha}$ for $\alpha\in\{0.50,1.00\}$ at the fixed point $\xi^*$ for different values of $(n,p)$ over 500 independent replicates, with summary boxplots shown in Figure \ref{numeric_fig1}. In panel (a), we fix $n = 150$ and vary $p \in \{2, 5, 10, 20\}$.
 In panel (b), we fix $p$ at $15$ and vary $n \in \set{5, 10, 50, 100}$. It is evident that $\rho$ remains less than 1 for all combinations of $(n,p)$. 
Observe also that the first two cases in panel (b) correspond to $p > n$, and as predicted by the theory, the spectral radius continues to be smaller than $1$. It can be seen from either panel that on an average $\rho$ at $\alpha = 0.5$ is higher than the corresponding value at $\alpha = 1$.

\begin{figure}[htbp!]
    \centering
\begin{subfigure}{0.50\textwidth}
  \centering
  \includegraphics[scale= 0.225]{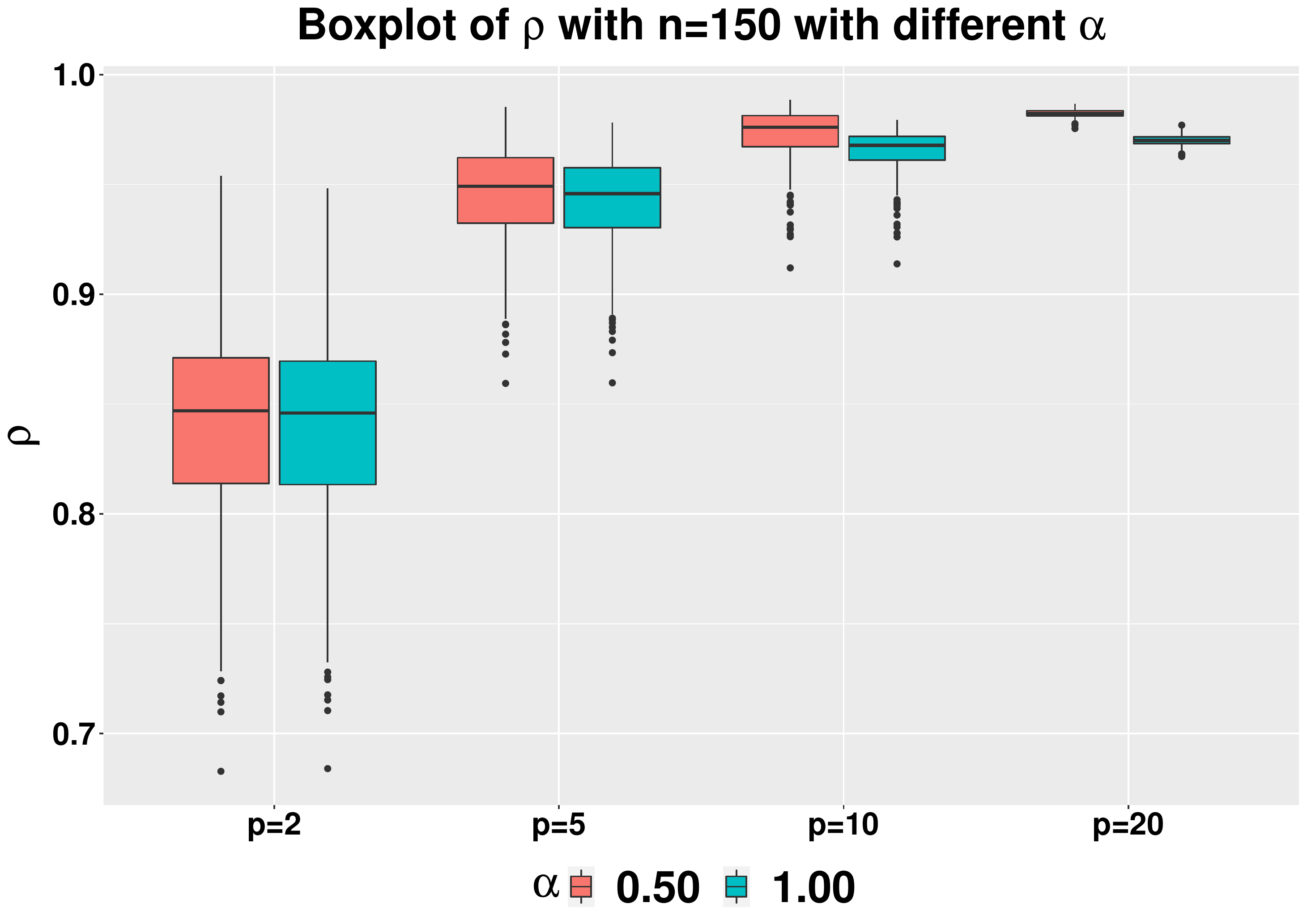}
  \caption{}
  \label{numerical_fig1_a}
\end{subfigure}%
\begin{subfigure}{0.50\textwidth}
  \centering
  \includegraphics[scale= 0.225]{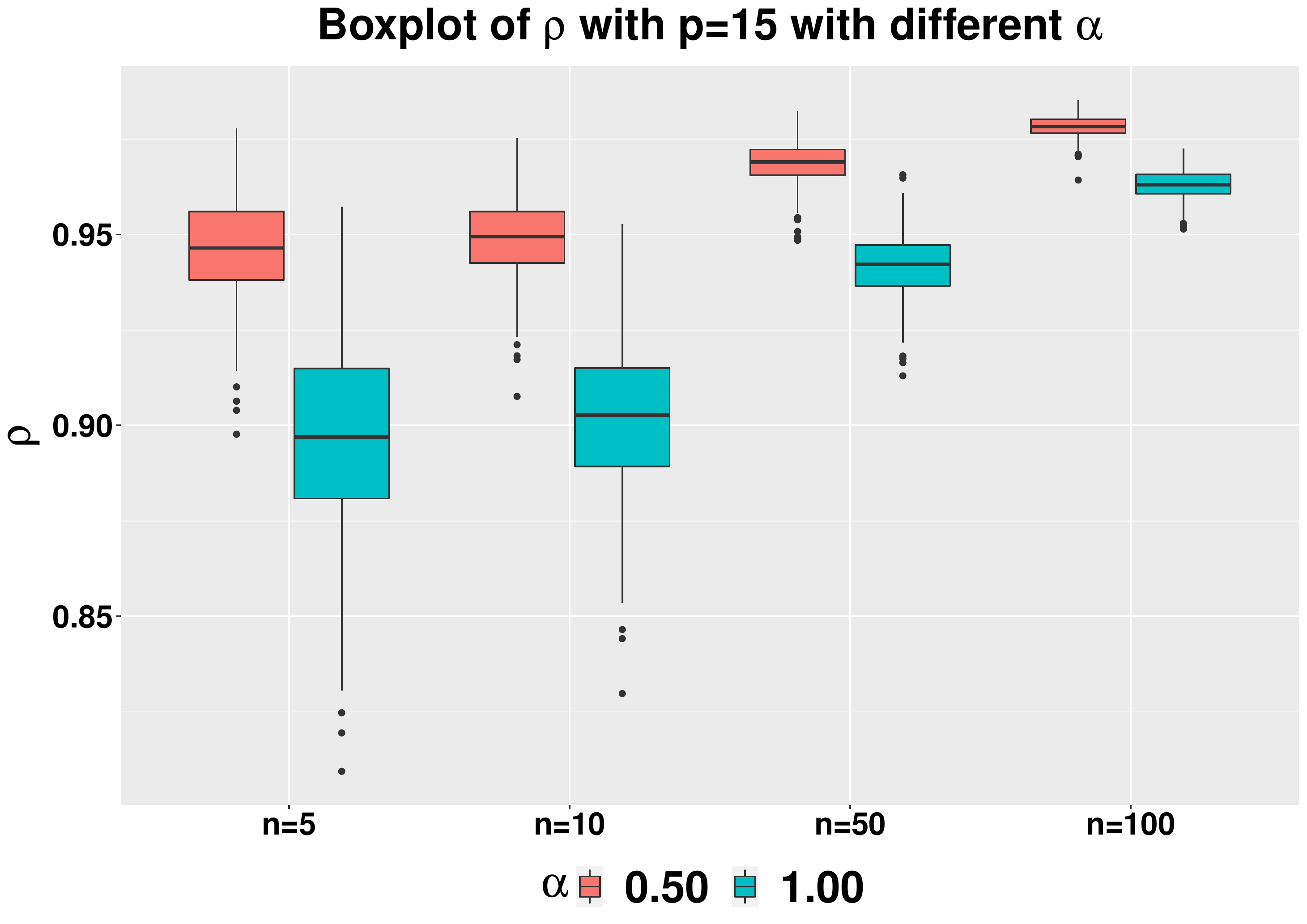}
  \caption{}
  \label{numerical_fig1_b}
\end{subfigure}%
\caption{(a) Boxplot of $\rho$ with $n$ fixed and varying $p$ (b) Boxplot of $\rho$ with $p$ fixed and different values of $n$. Both the plots are produced with 500 replications using the same data $(y,\bX)$. It can be clearly seen that, for both the $\alpha\in\{0.50,1.00\}$ the spectral radius is strictly less than 1 irrespective of $n$ and $p$.}
\label{numeric_fig1}
\end{figure}

\begin{figure}[htbp!]
    \centering
\begin{subfigure}{0.50\textwidth}
  \centering
  \includegraphics[scale= 0.225]{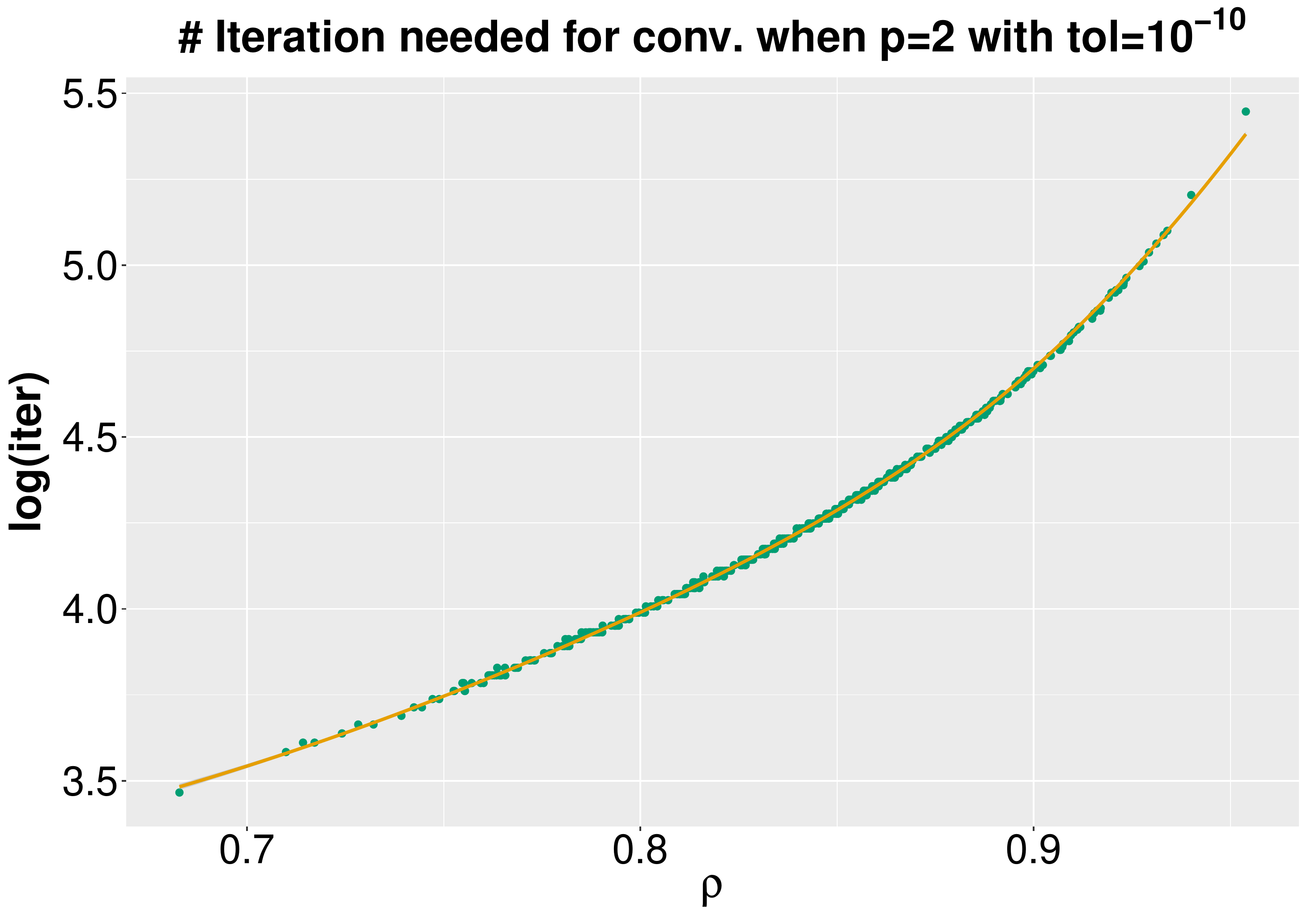}
  \caption{}
  \label{sec_NOS_fig1_a}
\end{subfigure}%
\begin{subfigure}{0.50\textwidth}
  \centering
  \includegraphics[scale= 0.225]{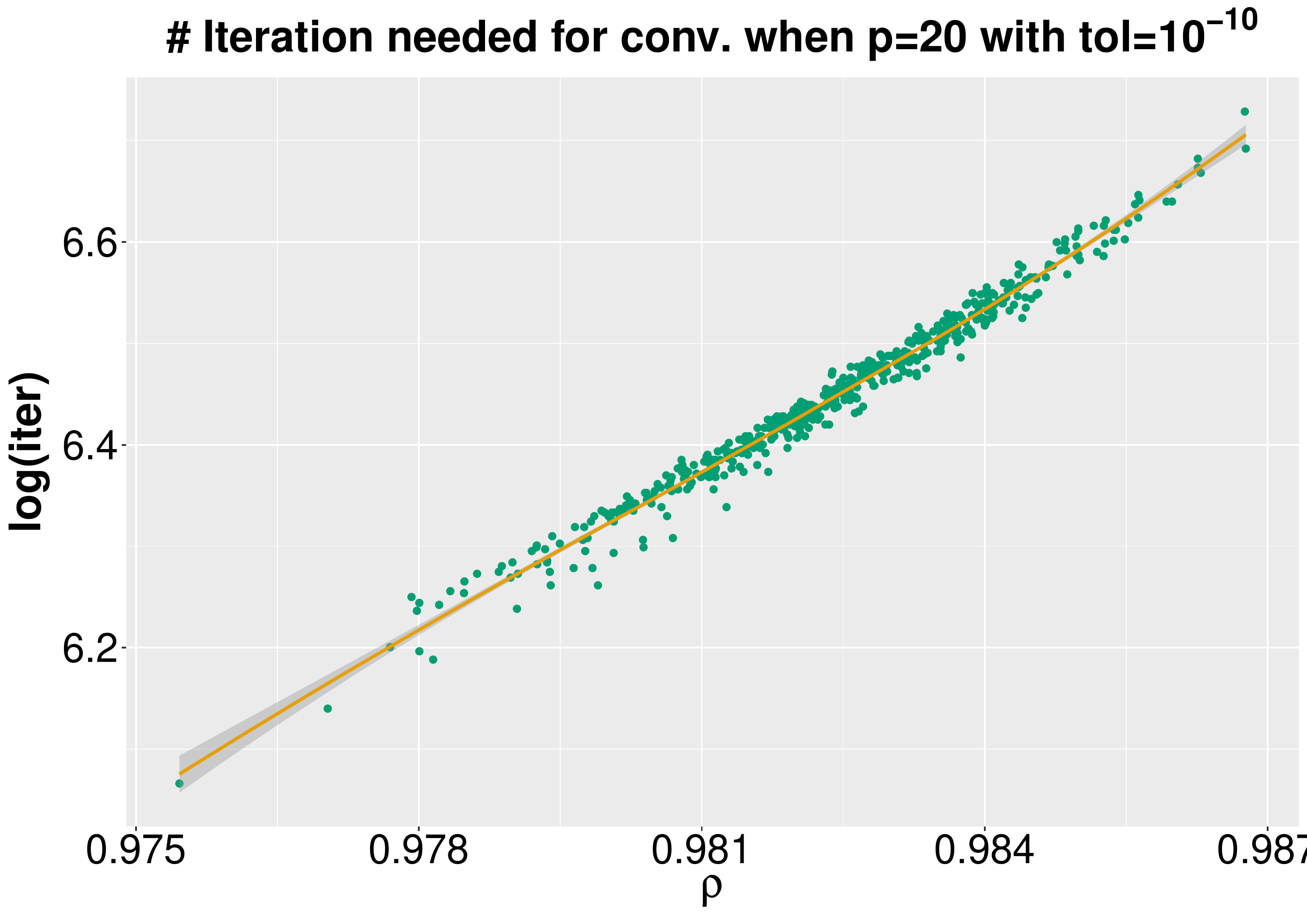}
  \caption{}
  \label{sec_NOS_fig1_b}
\end{subfigure}%
\caption{For each replicated dataset $i \in \set{1,2,\ldots,500}$, we observe the number of iterations ($\log$-scale) required by the algorithm for convergence and calculate the $\rho$ at the fixed point solution for $\alpha=1$. We plot ($\rho_i$,$\log(\text{iter}_i)$) for all $i \in \set{1,2,\ldots,500}$ and fitted with LOWESS line to explore the relationship between these two dependent variables. In (a), we generated the data $(y,\bX)$ with $(n=150, p=2)$. In (b), data $(y,\bX)$ is generated with $(n=150, p=20)$. It can be seen that number of iteration grows almost exponentially with the increasing $\rho$. Also, for fixed $n$, bigger $p$ leads to higher $\rho$ and as a consequence more iteration are required for convergence.}
\label{sec_NOS_fig1}
\end{figure}

It is worth noting that local asymptotic stability does not provide any information other than existence of a $\delta$ - neighborhood around $\xi^*$ such that, if the system is initialized in that region the iterates converge to $\xi^*$ as $t \to \infty $. Also, the definition does not say anything about the rate of convergence.  In the following, we provide a heuristic argument to connect the notion of rate of convergence with the spectral radius.

For simplicity, consider the one-dimensional system $x^{t+1} = g(x^t)$ for some function $g:\mb{R} \to \mb{R}$ which is twice continuously differentiable. If $x^*$ is a fixed point of this system, using Taylor's theorem we have for some $T_0>0$, $(x^{t+1} - x^*)\approx g(x^*)(x^t - x^*)$ for all $t \geq T_0$. Recall that the linear rate of convergence \citep{romero2019convergence} is given by,
$
\gamma = \lim_{t \to \infty} {\|x^{t+1} - x^*\|}\slash\|x^{t} - x^*\|,
$
provided the limit exists. In the above scenario, the iterates converge when $g(x^*) < 1$ and the rate of convergence is $g(x^*)$. For a general $d$-dimensional linear system $\alpha^{(t+1)} = A \alpha^{(t)}$ with fixed point $\alpha^* = 0$, it can be shown that, $\|\alpha^{(t)}\|_2 = \|A^t \alpha^{(0)}\|_2 \leq \{\rho(A)\}^t \|\alpha^{(0)}\|_2$ where $A$ is a square matrix and $\|\cdot\|_2$ is the Euclidean norm. Hence $\rho(A)$ acts as a rate of convergence for this case.  Figure  \ref{sec_NOS_fig1} is an illustration of the number of iterations needed for the system given by \eqref{sec_TTA_eqn3} to converge to the fixed point as a function of $\rho(\mathbf{J}_{\alpha=1})$. It is evident that the  number of  iterations increases exponentially as $\rho(\mathbf{J}_{\alpha=1})$ tends to $1$.    

\subsection{A special case of semi-orthogonal design}\label{ssec:semi}
In this section, we shall consider a simple hierarchical logistic regression model given by,
\begin{align}\label{Semi_ortho_eqn1}
p(y_{ij}=1 \mid \beta) = 1/\{1+ \exp{(-  \beta_j)}\} \quad ( i = 1,2,\ldots,n; \ j = 1,2,\ldots,p),
\end{align}  
We assume a prior $\beta \sim \mbox{N}_p(0,\sigma^2_{\beta}\mathbbm{I}_p)$. 
In this case, the results of Section \ref{sec:stabilityTT} can be strengthened to obtain a global convergence rate of the EM sequence \eqref{sec_TTA_eqn3}.  
One key advantage here is the ability to decouple the EM sequence into independent coordinate-wise updates. This is illustrated in 
Lemma \ref{Semi_ortho_lemma1}.  
\begin{lemma}\label{Semi_ortho_lemma1}
 The EM updates for the model \eqref{Semi_ortho_eqn1} can be simplified to,
\begin{align}\label{semi_ortho_eqn2}
\Big(\zeta^{t+1}_j\Big)^2 = \frac{1}{\{\sigma_{\beta}^{-2} - 2n\,A(\zeta^{t}_j)\}} + \frac{n^2\,(\bar{y}_j - 1/2)^2}{\{\sigma_{\beta}^{-2} - 2n\,A(\zeta^{t}_j)\}^{2}} \quad (j = 1,2,\ldots,p),
\end{align}
where, $\bar{y}_j = \sum^n_{i=1} y_{ij}/n$, for all $j = 1,2,\ldots,p$ and $\zeta^{t}$ is the update at the $t^{th}$ iteration.
\end{lemma}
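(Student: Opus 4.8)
The plan is to recognize model~\eqref{Semi_ortho_eqn1} as the logistic regression model~\eqref{sec_TTA_eqn0} carrying a block-structured, semi-orthogonal design, and then to specialize the generic EM recursion~\eqref{sec_TTA_eqn3}. Index the observations by pairs $(i,j)$ with $i\in\{1,\ldots,n\}$, $j\in\{1,\ldots,p\}$; then \eqref{Semi_ortho_eqn1} coincides with \eqref{sec_TTA_eqn0} once we let the covariate of observation $(i,j)$ be the canonical basis vector $e_j\in\mb{R}^p$. Stacking these rows gives the $np\times p$ design $\bX$, for which $\bX^{\T}\bX=n\,\mb{I}_p$ and, more importantly, $\bX^{\T}\mbox{diag}\{A(\xi)\}\bX=\mbox{diag}\big(\sum_i A(\xi_{i1}),\ldots,\sum_i A(\xi_{ip})\big)$ is diagonal for every collection of variational parameters $\xi=(\xi_{ij})$. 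Taking $\mu_\beta=0$, $\Sigma_\beta=\sigma_\beta^2\mb{I}_p$, and $\alpha=1$ in~\eqref{eq:muxisigxi}, and writing $\Sigma(\cdot),\mu(\cdot)$ for $\Sigma_\alpha(\cdot),\mu_\alpha(\cdot)$, it then follows that $\Sigma(\xi)$ is diagonal and $\mu(\xi)=\Sigma(\xi)\bX^{\T}(y-\tfrac12\ind_{np})$ is available coordinatewise.

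Next I would exploit the symmetry of the design to collapse the $np$-dimensional EM map to a $p$-dimensional one. The key observation is that the $(i,j)$ entry of the right-hand side of~\eqref{sec_TTA_eqn3} equals $e_j^{\T}\{\Sigma(\xi^t)+\mu(\xi^t)\mu^{\T}(\xi^t)\}e_j$, which is independent of $i$; hence after a single iteration all variational parameters within block $j$ coincide, so we may write $\xi_{ij}^t=\zeta_j^t$ for every $i$ and every $t\ge 1$. Substituting $\xi_{ij}=\zeta_j$ into the diagonal formulas from the first step, and using $[\bX^{\T}(y-\tfrac12\ind_{np})]_j=\sum_{i=1}^n(y_{ij}-\tfrac12)=n(\bar y_j-\tfrac12)$ with $\bar y_j=n^{-1}\sum_i y_{ij}$, gives $[\Sigma(\zeta)]_{jj}=(\sigma_\beta^{-2}-2nA(\zeta_j))^{-1}$ and $[\mu(\zeta)]_j=n(\bar y_j-\tfrac12)(\sigma_\beta^{-2}-2nA(\zeta_j))^{-1}$.

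Finally I would assemble the pieces: the $j$th coordinate of~\eqref{sec_TTA_eqn3} reads $(\zeta_j^{t+1})^2=[\Sigma(\zeta^t)]_{jj}+[\mu(\zeta^t)]_j^2$, which is precisely~\eqref{semi_ortho_eqn2}; the denominators are strictly positive because $A(\cdot)<0$ on $\mb{R}^+$ by Proposition~\ref{sec_TTA_prop1}, so the recursion is well defined. The computation is essentially bookkeeping once the block structure is in place, and I expect the only delicate point to be the reduction step—verifying that the EM operator genuinely decouples across the $p$ blocks and that each per-block update depends on $\zeta^t$ only through its $j$th coordinate—which is immediate here since $e_j^{\T}Me_j$ depends only on the $(j,j)$ entry of $M$ while both $\Sigma(\xi^t)$ and $\mu(\xi^t)\mu^{\T}(\xi^t)$ are coordinate-separable under this design. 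This decoupling is exactly what makes the subsequent global convergence analysis tractable, since it replaces the coupled $np$-dimensional iteration by $p$ independent scalar fixed-point iterations.
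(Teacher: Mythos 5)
Your proof is correct and arrives at the same recursion, but it takes a mildly different route from the paper's. The paper re-derives the tangent-transform bound directly for the grouped log-likelihood $\sum_j n\bar{y}_j\beta_j-\sum_j n\log(1+e^{\beta_j})$, introducing a single variational parameter $\zeta_j$ per group from the outset (so $p$ parameters in total); the discussion following the lemma even stresses that $\zeta$ is indexed by groups rather than by individuals. You instead embed \eqref{Semi_ortho_eqn1} into the general model \eqref{sec_TTA_eqn0} via the $np\times p$ design whose rows are canonical basis vectors $e_j$, run the generic recursion \eqref{sec_TTA_eqn3} with $np$ observation-level parameters $\xi_{ij}$, and observe that the update for $(\xi_{ij}^{t+1})^2$ equals $e_j^{\T}\{\Sigma(\xi^t)+\mu(\xi^t)\mu^{\T}(\xi^t)\}e_j$, which is free of $i$; hence the parameters synchronize within blocks after one step and the iteration collapses to $p$ independent scalar maps. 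Both computations give $[\Sigma(\zeta)]_{jj}=\{\sigma_\beta^{-2}-2nA(\zeta_j)\}^{-1}$ and $[\mu(\zeta)]_j=n(\bar{y}_j-1/2)[\Sigma(\zeta)]_{jj}$, and therefore \eqref{semi_ortho_eqn2}. What your route buys is a literal specialization of the general algorithm, so that the framework of Theorem \ref{main_VB_1} applies verbatim to this design; what the paper's route buys is economy, since only $p$ variational parameters ever appear and the lemma is stated directly about that $p$-dimensional recursion. The only cosmetic caveat in your version is that the very first update from an unsynchronized $\xi^0$ involves $\sum_i A(\xi_{ij}^0)$ rather than $nA(\zeta_j^0)$; from $t\geq 1$ onward (or under a synchronized initialization) the recursion is exactly \eqref{semi_ortho_eqn2}, so this does not affect the claim.
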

\begin{proof} 
The log-likelihood from \eqref{Semi_ortho_eqn1} is given by,
$$
\log p(y\mid \beta) \propto \sum^p_{j=1} n\,\bar{y}_j \, \beta_j - \sum^p_{j=1}n\, \log\{1 + \exp(\beta_j)\}.
$$
Following the calculations of \eqref{sec_TTA_eqn1} and \eqref{sec_TTA_eqn2} corresponding to $\alpha=1$,
$$
\log{p}_{\l}(y,\beta \mid \zeta) = -\frac{1}{2}\beta^{\T}\left[\sigma^{-2}_{\beta}\mathbbm{I}_p - 2n\, \text{diag}\{A(\zeta)\}\right]\beta +  n\,{(\bar{Y}-1/2\ind_p )}^{\T}\beta + n\,\ind^{\T}_p C(\zeta)  + \mbox{Const.},
$$
where, $\bar{Y}^\T = [ \bar{y}_1,\bar{y}_2,\ldots, \bar{y}_p ]$. We claim from the above equation that, $ \beta_j \mid Y,\zeta_j \sim \mbox{N}\big(\mu(\zeta_j),\Sigma(\zeta_j)\big)$, independently for all $j =1,2,\ldots,p$.  Here $\Sigma^{-1}(\zeta_j) =\{\sigma_{\beta}^{-2} - 2n\,A(\zeta_j)\}$ and \\
$\mu(\zeta_j) = {n\,\big(\bar{y}_j- {1}/{2}\big)}/ \{\sigma_{\beta}^{-2} - 2n \,A(\zeta_j)\}$. Following the calculation similar to \eqref{sec_TTA_eqn3} we obtain \eqref{semi_ortho_eqn2}.
\end{proof}
It is important to distinguish between the EM update $\xi$ in \eqref{sec_TTA_eqn3} and $\zeta$ in \eqref{semi_ortho_eqn2}. In the general setting \eqref{sec_TTA_eqn0}, the variational parameter $\xi$ are introduced for each individual $i \in \set{1,2,\ldots,n}$, whereas $\zeta$ is introduced here for different groups $j\in \set{1,2,\ldots,p}$. Though we used similar techniques to get the updates, they have different interpretation. Figure \ref{numeric_fig1} and Figure \ref{ODM_fig1} are not comparable in that sense.

 The parallelization of the updates of $\zeta$ makes the posterior of $\beta$ independent.  Also, since the updates are independent and identical for all  $j = 1,2,\ldots,p$ given the initial point, it suffices to study the stability of a single coordinate. The following theorem assures the global asymptotic stability of the EM sequence in \eqref{semi_ortho_eqn2}.
\begin{theorem}\label{thm_semi_ortho}
The EM updates in \eqref{semi_ortho_eqn2} are globally asymptotically stable assuming $\beta_j \sim \mbox{N}(0,\sigma_\beta^2)$ with $\sigma_\beta =1$ for all $j = 1,2, \ldots, p$ and $n\geq 2$.  Moreover, with $\lambda_j^{t} := (\zeta_j^{t})^2\ (j=1, \ldots, p)$, there exists a global constant $\rho \in (0, 1)$ such that 
\begin{eqnarray*}
|\lambda_j^{t}  - \lambda_j^*| \leq \rho^t |\lambda_j^{0}  - \lambda_j^*|. 
\end{eqnarray*}
\end{theorem}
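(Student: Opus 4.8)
By Lemma~\ref{Semi_ortho_lemma1} the $p$ coordinates of the EM map decouple into identical scalar recursions, so it suffices to analyze one of them; I drop the subscript, set $\lambda=\zeta^2$, $c=n^2(\bar y-1/2)^2\in[0,n^2/4]$, and use $\sigma_\beta=1$. Then \eqref{semi_ortho_eqn2} becomes $\lambda^{t+1}=F(\lambda^t)$ with
\[
F(\lambda)=\frac{1}{u(\lambda)}+\frac{c}{u(\lambda)^2},\qquad u(\lambda)=1-2n\,A(\sqrt\lambda),
\]
$A$ the Jaakkola function. By Proposition~\ref{sec_TTA_prop1}, $A$ is $C^2$ and increasing on $\mb{R}^+$, $A(0)=-1/8$, $A<0$, $A(\zeta)+\zeta A'(\zeta)<0$, and $A'(\zeta)/\zeta>0$; hence $F$ extends to a $C^2$, strictly positive function on $[0,\infty)$ and $u(\lambda)\in(1,1+n/4]$ for all $\lambda\ge0$. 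Consequently $F(\lambda)<1+c$ and $F(\lambda)\ge\ell:=(1+n/4)^{-1}+c\,(1+n/4)^{-2}$, so $F$ maps $[0,\infty)$ into the compact, forward-invariant interval $I:=[\ell,1+c]$; moreover $\ell\ge c\,(1+n/4)^{-2}$, i.e.\ $c\le(1+n/4)^2\lambda$ for every $\lambda\in I$, an inequality used below.

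A chain-rule computation gives, with $\zeta=\sqrt\lambda$,
\[
F'(\lambda)=\frac{n\,A'(\zeta)/\zeta}{u(\lambda)^2}\Big(1+\frac{2c}{u(\lambda)}\Big)>0,
\]
so $F$ is strictly increasing and, being a continuous self-map of the compact interval $I$, has a fixed point $\lambda^*\in I$ by the intermediate value theorem. The crux is to show $\rho:=\sup_{\lambda\ge0}F'(\lambda)<1$. Writing $b=-A(\zeta)\in(0,1/8]$ and $u=1+2nb$, I would bound the two summands of $F'$ separately. For the first summand, the boundedness of $A'(\zeta)/\zeta$ (value $1/48$ at the origin) together with $A(\zeta)+\zeta A'(\zeta)<0$ in the form $A'(\zeta)/\zeta<b/\lambda$ gives $n\,A'(\zeta)/(\zeta u^2)\le\min\{n/48,\,1/(8\lambda)\}$, and using that $u$ is of order $n$ precisely when $\zeta$ is small (since then $b$ is close to $1/8$) one checks this is strictly below $1$ for all $\lambda\ge0$ once $n\ge2$. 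For the second summand, $2c\,n\,(A'(\zeta)/\zeta)/u^3$, I would combine the refined bound $A'(\zeta)\le\tanh(\zeta/2)/(4\zeta^2)$ with the trapping inequality $c\le(1+n/4)^2\lambda$ and carry out a case split on the magnitude of $\zeta$: for small $\zeta$ the factor $u^{-3}$ (with $u\approx1+n/4$) is tiny, for large $\zeta$ the decay $A'(\zeta)/\zeta\lesssim\zeta^{-3}$ dominates while $\lambda$ is large, and in between both effects help, so in each range the term stays below $1$. Assembling the estimates yields a constant $\rho=\rho(n)\in(0,1)$. (The pointwise bound $F'(\lambda^*)<1$ at the fixed point is already the $p=1$ instance \eqref{sec_mainthm_1} of Theorem~\ref{main_VB_1}; what is new here is its uniformity over the whole half-line.)

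Granting $\rho<1$, the mean value theorem applied on the segment joining $\lambda^t$ and $\lambda^*$ (contained in $[0,\infty)$) gives $|\lambda^{t+1}-\lambda^*|=|F(\lambda^t)-F(\lambda^*)|\le\rho\,|\lambda^t-\lambda^*|$ for all $t\ge0$; iterating, $|\lambda^t-\lambda^*|\le\rho^t\,|\lambda^0-\lambda^*|\to0$, which is the asserted rate and also forces uniqueness of $\lambda^*$, hence global asymptotic stability; finally $\zeta^t=\sqrt{\lambda^t}\to\sqrt{\lambda^*}=\zeta^*$, and the argument applies verbatim to each coordinate $j$. I expect the uniform Jacobian bound (the second paragraph) to be the only genuine obstacle: a crude estimate of $F'$ can exceed $1$, and it is precisely the trapping-region inequality $c\le(1+n/4)^2\lambda$ — which says the iterates cannot be small while $c$ is large — that removes the apparently dangerous configurations, together with a careful case analysis on $\zeta$ and the hypotheses $\sigma_\beta=1$ and $n\ge2$.
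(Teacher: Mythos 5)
Your setup matches the paper's: you decouple the coordinates via Lemma \ref{Semi_ortho_lemma1}, write the scalar recursion $\lambda^{t+1}=F(\lambda^t)$ with exactly the map $h_{u,\sigma_\beta,n}$ of the paper, compute the same derivative $F'(\lambda)=\{nA'(\zeta)/(\zeta u^2)\}(1+2c/u)$, and correctly identify that everything reduces to showing $\sup_\lambda F'(\lambda)<1$, after which the mean value theorem (Lemma \ref{sec_NOS_lemma3}) finishes the argument. The observation that $F$ maps into a compact invariant interval, giving existence of the fixed point, is a nice addition the paper leaves implicit. However, the crucial quantitative step is not carried out: the second paragraph consists of ``one checks'' and ``a careful case analysis,'' and this is precisely where all the content of the theorem lives.

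Moreover, the specific strategy you sketch for the second summand does not close. The trapping inequality $c\le(1+n/4)^2\lambda$ is useful only when $\lambda$ is small; in the regime where $\lambda=\zeta^2$ is large it is far \emph{weaker} than the trivial bound $c=n^2(\bar y-1/2)^2\le n^2/4$, so substituting it makes the estimate worse there. Concretely, bounding the second summand by $2n(1+n/4)^2 b/(1+2nb)^3$ with $b=-A(\zeta)$ and maximizing over $b\in(0,1/8]$ gives $4(1+n/4)^2/27$ at $b=1/(4n)$, which exceeds $1$ once $n\ge 7$ --- so the case split as described cannot produce a uniform $\rho<1$ without a different input. The paper's mechanism is different and is the key idea you are missing: using only $c\le n^2/4$, one writes the resulting majorant as $h'_{1,n}(z)=\{A'(\sqrt z)/\sqrt z\}\,\sigma_n^{-2}\{1/n+1/(2\sigma_n)\}$ with $\sigma_n=1/n-2A(\sqrt z)$, proves (Lemma \ref{semi_ortho_lemma1}) that for each fixed $z$ this is \emph{increasing in $n$} for $n\ge2$ with explicit limit $h'(z)=-A'(\sqrt z)/\{16\sqrt z\,A^3(\sqrt z)\}$, and then shows $h'(z)<1$ by reducing it to the elementary inequality $2(e^x-1)<x(e^x+1)$ for $x>0$. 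This monotonicity-in-$n$ plus explicit-limit argument is what replaces your case analysis, and without it (or a correct substitute) the proof has a genuine gap.
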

\begin{proof}
The proof of Theorem \ref{thm_semi_ortho} is provided for $\sigma_\beta = 1$ for technical convenience.
Letting $z=\zeta_j^2$ and $u= (\bar{y}_j - 0.5)$, consider $h_{u,\sigma_{\beta},n}(z) = 1/\{\sigma^{-2}_{\beta} - 2n\,A(\sqrt{z})\} + n^2\,u^2/ \{\sigma^{-2}_{\beta} - 2n\,A(\sqrt{z})\}^2$  for fixed $j = 1,2,\ldots,p$. Then one can write \eqref{semi_ortho_eqn2} by, $z^{t+1}= h(z^t)$. It is easy to see that,
\begin{align}\label{semi_ortho_eqn3}
h_{u,\sigma_{\beta},n}^{\prime}(z) &= \frac{n\,A^{\prime}(\sqrt{z})}{\sqrt{z}} \{\sigma^{-2}_{\beta} - 2n\, A(\sqrt{z})\}^{-2}\trd{1 +  \frac{2n^2\, u^2}{\{\sigma^{-2}_{\beta} - 2n\,A(\sqrt{z})\}}},
\end{align}
Let us call $\sigma_n = \{\sigma^{-2}_{\beta}/n - 2\,A(\sqrt{z})\}$. Since $u^2 \leq 1/4$ as $\bar{y}_j \in [0,1]$, we have the following inequality,
\begin{align}
h_{u,\sigma_{\beta},n}^{\prime}(z) &\leq \frac{A^{\prime}(\sqrt{z})}{\sqrt{z}} \sigma_n^{-2}\trd{\frac{1}{n} +  \frac{1}{2\,\sigma_n}} := h_{\sigma_{\beta},n}^{\prime}(z).\nonumber
\end{align}
In Appendix \S \ref{ortho_proof_convergence}, we show that $\sup_{n \geq 1} \| h_{\sigma_{\beta},n}^{\prime}\|_\infty <1$ when $\sigma_{\beta} = 1$, where 
$\| h_{\sigma_{\beta},n}^{\prime}\|_\infty := \sup_{z \in \mathbb{R}^+} h_{\sigma_{\beta},n}^{\prime}(z)$. The proof is then concluded by appealing to Lemma \ref{sec_NOS_lemma3} in the Appendix with $\rho = \sup_{n \geq 1} \| h_{1,n}^{\prime}\|_\infty$.
\end{proof} 

\begin{figure}[h]
    \centering
\begin{subfigure}{0.48\textwidth}
  \centering
  \includegraphics[scale= 0.220]{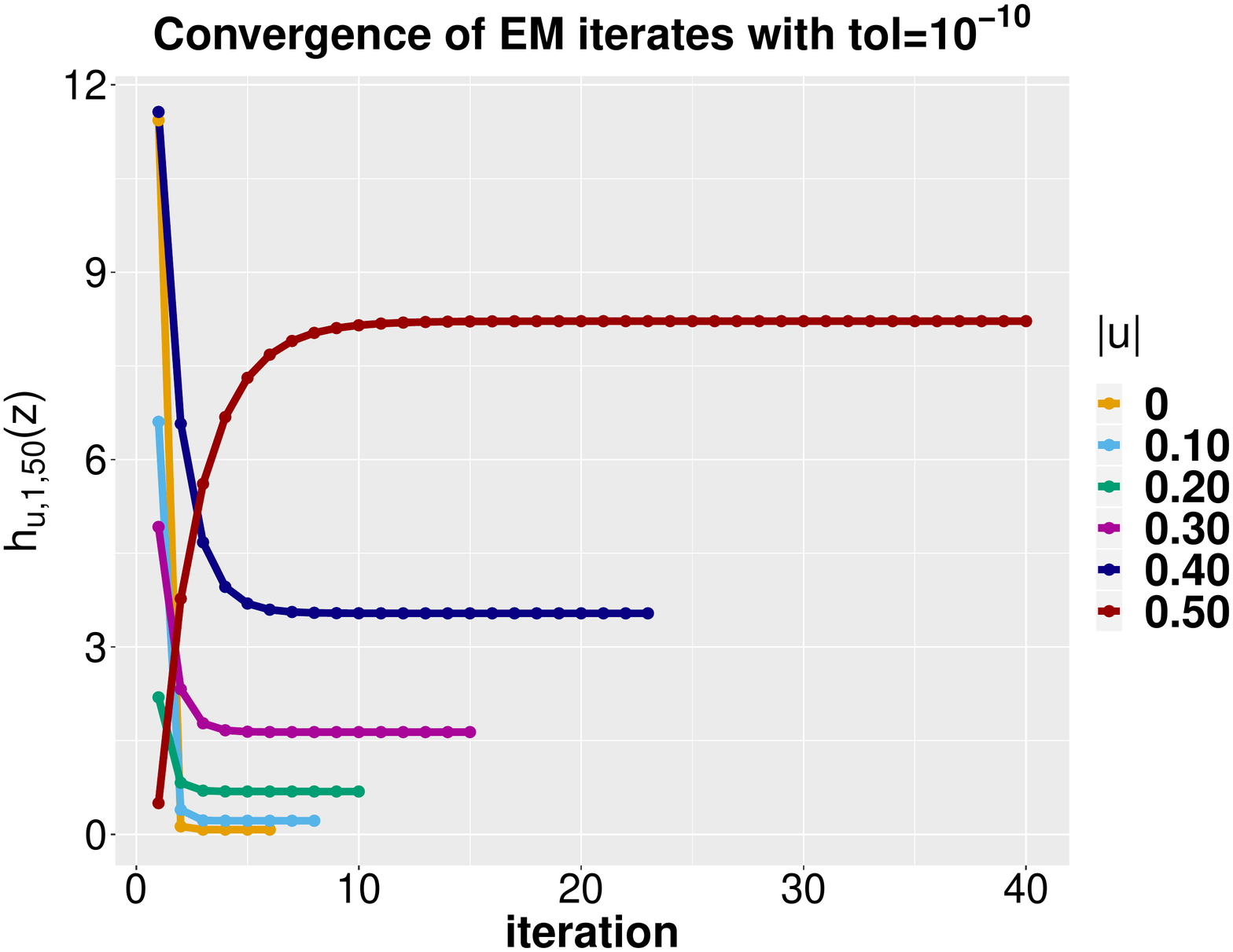}
  \caption{}
  \label{ODM_fig1_a}
\end{subfigure}%
\begin{subfigure}{0.48\textwidth}
  \centering
  \includegraphics[scale= 0.225]{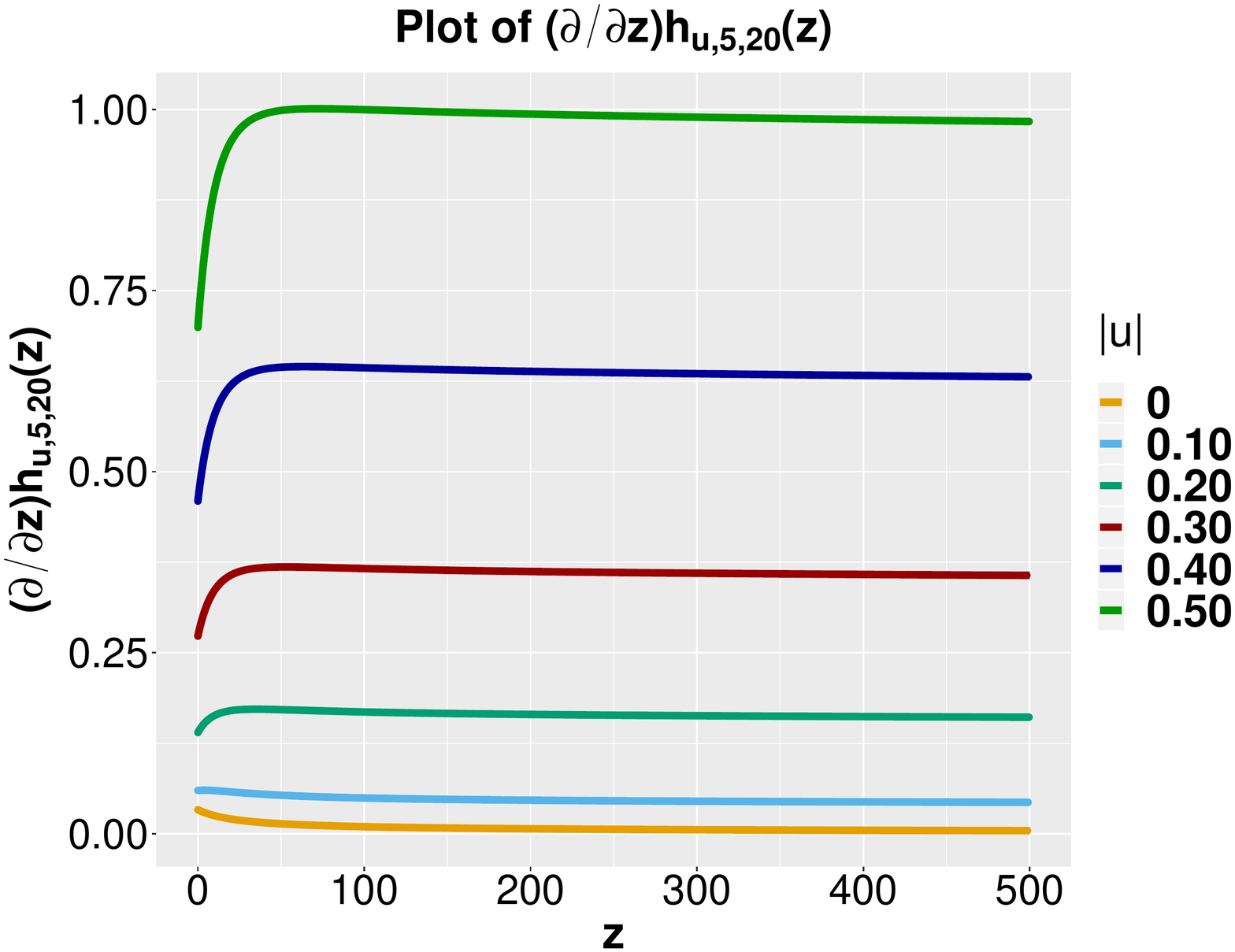}
  \caption{}
  \label{ODM_fig1_b}
\end{subfigure}%
\\
\begin{subfigure}{0.48\textwidth}
  \centering
  \includegraphics[scale= 0.225]{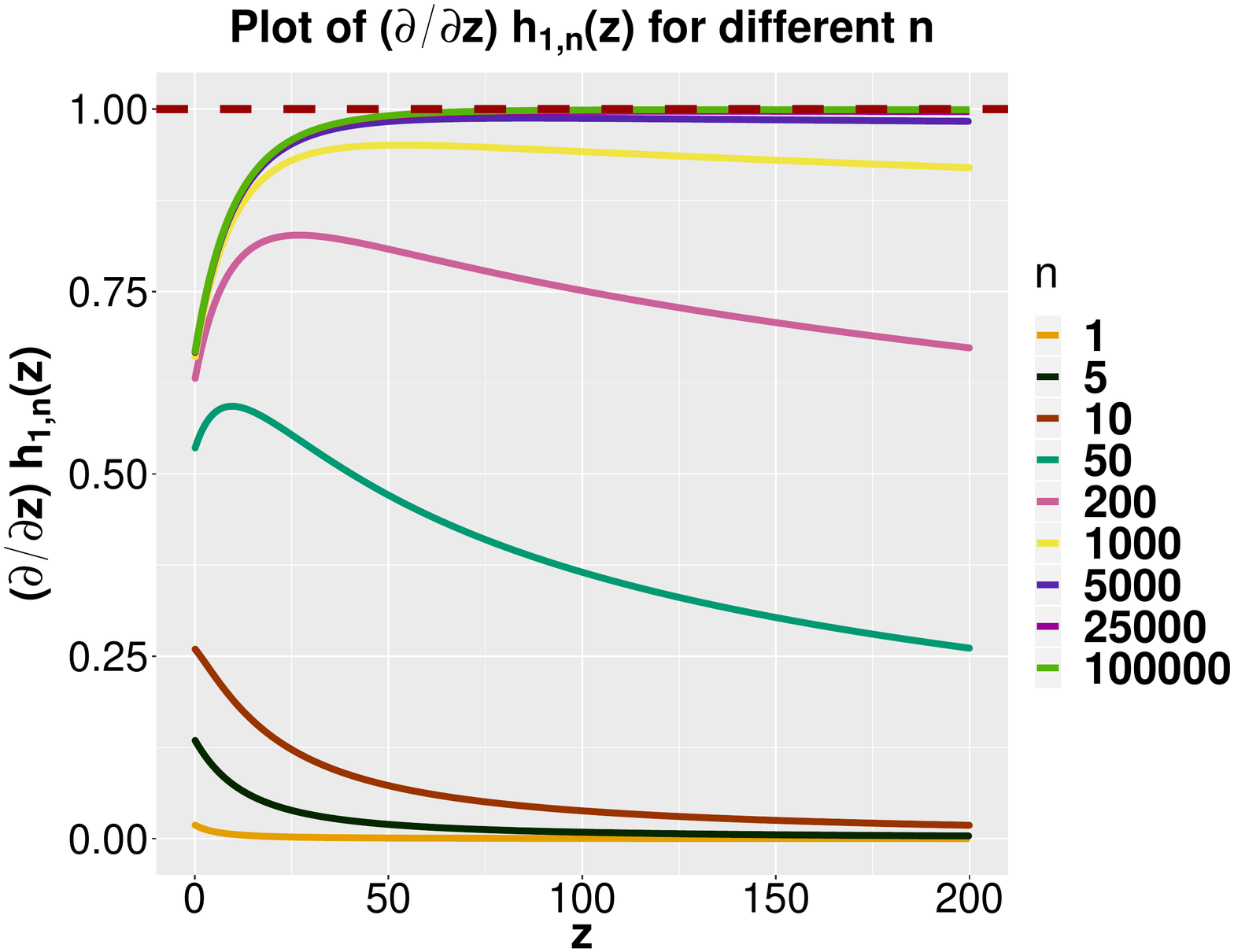}
  \caption{}
  \label{ODM_fig1_c}
\end{subfigure}%
\begin{subfigure}{0.48\textwidth}
  \centering
  \includegraphics[scale= 0.225]{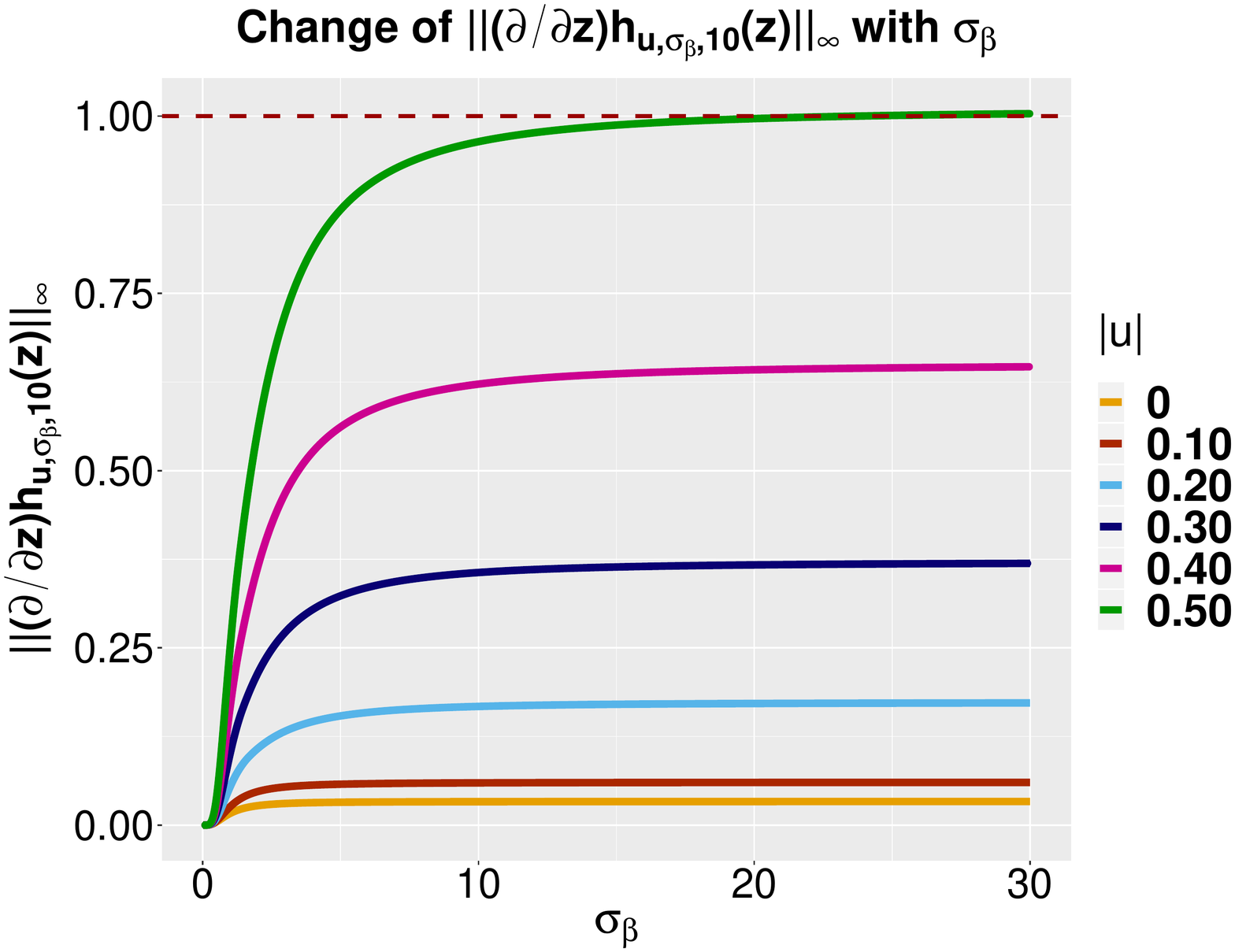}
  \caption{}
  \label{ODM_fig1_d} 
\end{subfigure}%
    \caption{(a) Evolution of $\lambda_1^t$ with arbitrary initialization when $\sigma_{\beta}=1$ and $n=50$ for different $|u|$ (b) Plot of $h_{u,\sigma_{\beta},n}^{\prime}(z)$ for varying $|u|$ when $\sigma_{\beta}=5$ and $n=20$.
(c) Plot of $h_{1,n}^{\prime}(z)$ for different values of $n$, (d) We plot $\|h_{u,\sigma_{\beta},n}^{\prime}\|_{\infty} = \max_{z\in \mb{R}^+}h_{u,\sigma_{\beta},n}^{\prime}(z)$ as a function of $\sigma_{\beta}$, for different $|u|$ and a fixed $n=10$. Numerically it is seen that  $\|h_{0.5,\sigma_{\beta},10}^{\prime}(z)\|_{\infty}\geq1$ when $\sigma_{\beta}\geq 12.894$.}
    \label{ODM_fig1} 
\end{figure}
It can be seen from \eqref{semi_ortho_eqn2} that the updates of $\zeta_j$ depend on the $(y,\bX)$ through $\bar{y}_j$, which is a sufficient statistic for $\beta_j$. Therefore if for some $j\neq j^{\prime}$ we have $\bar{y}_j = \bar{y}_{j^{\prime}}$,  the sequences $\{\zeta^t_j\}$ and $\{\zeta^t_{j^{\prime}} \}$ converge to the same limit.  Figure \ref{ODM_fig1_a} shows the global convergence of the EM sequence for different $\bar{y}_j$ when $\sigma_{\beta} = 1$ and $n=50$ with arbitrary initializations. Numerically we assumed convergence when $|\zeta^{t+1} - \zeta^t|<10^{-10}$. Interestingly, it is observed that convergence is slower when the data becomes more imbalanced, i.e. $|\bar{y}_j-0.5| \to \pm 0.5$.  A similar behavior for the mixing time of the P\'{o}lya-Gamma data augmentation Gibbs sampling in Bayesian logistic regression is observed in \cite{johndrow2016inefficiency}, which is all the more interesting given the connection between P\'{o}lya-Gamma augmentation and tangent transforms established by \cite{durante2019conditionally}.


Figure \ref{ODM_fig1_b} shows the behavior of $h_{u,5,20}^{\prime}(z)$  for different values of $u$. Barring $u=0$, in all other cases $h_{u,5,20}^{\prime}(z)$ increases first before dropping off. Figure \ref{ODM_fig1_c} shows that for fixed $z$, $h_{1,n}^{\prime}(z)$ is an increasing function of $n$ and less than $1$. Lemma \ref{semi_ortho_lemma1} proves this fact and in addition shows that for fixed $z$, $\lim_{n\to \infty} h_{1,n}^{\prime}(z) < 1$.  It is important to note is that $h_{\sigma_{\beta},n}^{\prime}(z)$ is dependent on $\sigma_{\beta}$ and for large $\sigma_{\beta}$ and fixed $z$, $h_{\sigma_{\beta},n}^{\prime}(z)$ may not be an increasing function of $n$.  Finally, Figure \ref{ODM_fig1_d} shows $\|h_{u,\sigma_{\beta},10}^{\prime}\|_\infty$ increases as $\sigma_{\beta}$ increases. It can be easily verified that for fixed $u$ and $n$, $h_{u,\sigma_{\beta},n}^{\prime}(z)$ is an increasing function of $\sigma_{\beta}$ and also for fixed $n$ and $\sigma_{\beta}$, an increasing function of $|u|$. Numerically it can be seen that $\|h^{\prime}_{0.5, \sigma_{\beta},10}\|_{\infty}\geq1$ when $\sigma_{\beta}\geq 12.894$. Overall, as the data gets more imbalanced, a flatter prior $\beta$ increasingly hurts the convergence. 


\section{Extension to Multinomial Logit}
In this section we provide an extension of the results in Section \ref{sec:stabilityTT}  to the case of multinomial logit regression where the response is an unordered categorical random variables with $K$ levels. Assume $y_i \,(i =1,2,\ldots,n)$, takes values in $\set{1, 2, \ldots, K}$ with following probabilities:
\[
p[y_i = j \mid \beta_1,\beta_2,\ldots,\beta_{K-1}]=
\begin{cases}
\frac{\exp{(\bx^T_i\beta_j)}}{1 + \sum^{K-1}_{j=1}\exp{(\bx^T_i\beta_j)}}&\text{for $ j = 1,2,\ldots,K-1$}\\
\frac{1}{1 + \sum^{K-1}_{j=1}\exp{(\bx^T_i\beta_j)}} &\text{for $j = K$}.\\
\end{cases}
\]
Assume $\beta_j \sim \mbox{N}_p(\mu_j,\Sigma_j)\ (j = 1,2,\ldots,K-1)$.  Let us define, $Y_{n \times K-1} = [Y_1, Y_2,\ldots, Y_{K-1}]$ with $Y^{\T}_j = \trd{\ind{\p{y_1=j}}, \ind{\p{y_2=j}}, \ldots, \ind{\p{y_n=j}}}\ (j = 1,2,\ldots,K-1)$, $\bX$ is the design matrix. Specific to each individual $i$ and class $j$, we introduce a variational parameter denoted by $\chi_{ij}\ ( i = 1,2,\ldots,n;\ j = 1, 2, \ldots, K-1)$. Let us call $\chi^{\T}_{j} = (\chi_{1j},\chi_{2j}, \ldots, \chi_{nj})$.
The following Lemma provides the update equation for the EM sequence.
\begin{lemma}\label{sec_EML_lemma1}
The EM updates to the above multinomial logit regression are given by,
 \begin{align}\label{sec_EML_eqn4}
      (\chi_{j}^{t+1})^2 = \mbox{diag}[\bX\{ \Sigma_{\alpha}(\chi^t_{j})/{\alpha} + \mu_{\alpha}(\chi^t_{j})\mu_{\alpha}^{\T}(\chi^t_{j})\}\bX^{\T}],\quad j\in\set{1, 2, \ldots, K-1} 
 \end{align}
 where, $\Sigma_{\alpha}^{-1}(\chi_{j}) = \Sigma^{-1}_{j}/{\alpha} - 2\bX^\T\mbox{diag}\{A(\chi_{j})\}\bX$ and $\mu_{\alpha}^\T(\chi_{j})\Sigma_{\alpha}^{-1}(\chi_{j}) = {\left(y_j- (1/2)\ind_n \right)}^{\T}\bX + \mu^{\T}_{j}\Sigma^{-1}_{j}/{\alpha}$.
\end{lemma}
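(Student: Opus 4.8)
The plan is to reduce the multinomial problem, via a suitable convex-duality minorant, to $K-1$ decoupled copies of the binary logistic computation already carried out in Section~\ref{sec:tt}, and then invoke that computation coordinate-wise. Writing $Y_j\in\{0,1\}^n$ for the class-$j$ response indicator vector (the object called $y_j$ in the statement), the fractional joint log-density is
\begin{align*}
\log p^{\alpha}(Y,\beta_1,\dots,\beta_{K-1}\mid\bX)
={}& \alpha\sum_{j=1}^{K-1} Y_j^{\T}\bX\beta_j \;-\; \alpha\sum_{i=1}^{n}\log\Big(1+\textstyle\sum_{j=1}^{K-1}e^{\bx_i^{\T}\beta_j}\Big)\\
& -\;\sum_{j=1}^{K-1}\tfrac12(\beta_j-\mu_j)^{\T}\Sigma_j^{-1}(\beta_j-\mu_j)\;+\;\mbox{Const.}
\end{align*}
The only term coupling the $\beta_j$'s is the multinomial log-partition $\log(1+\sum_j e^{\bx_i^{\T}\beta_j})$. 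I would peel it apart using the elementary inequality $1+\sum_{j=1}^{K-1}t_j\le\prod_{j=1}^{K-1}(1+t_j)$, valid for all $t_j\ge0$ (expand the product and drop the nonnegative cross terms), applied with $t_j=e^{\bx_i^{\T}\beta_j}$; this delivers the \emph{lower} bound $-\log(1+\sum_j e^{\bx_i^{\T}\beta_j})\ge-\sum_{j=1}^{K-1}\log(1+e^{\bx_i^{\T}\beta_j})$. To each summand I then apply, with a dedicated variational parameter $\chi_{ij}$, the Jaakkola--Jordan quadratic duality bound quoted just before \eqref{sec_TTA_eqn2}, namely $-\log(1+e^{x})\ge A(\chi)x^2-x/2+C(\chi)$. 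The outcome is a valid minorant $\log p_{\l}^{\alpha}(Y,\beta_1,\dots,\beta_{K-1}\mid\bX,\chi)$ of $\log p^{\alpha}(Y,\beta_1,\dots,\beta_{K-1}\mid\bX)$ that, up to an additive constant, equals $\sum_{j=1}^{K-1}\ell_j(\beta_j;\chi_j)$, where each $\ell_j$ is \emph{exactly} the right-hand side of \eqref{sec_TTA_eqn2} under the substitutions $y\mapsto Y_j$, $\Sigma_\beta\mapsto\Sigma_j$, $\mu_\beta\mapsto\mu_j$, $\xi\mapsto\chi_j$.

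Once this separable minorant is in hand, the rest is a coordinate-wise replay of the derivation of \eqref{eq:muxisigxi}--\eqref{sec_TTA_eqn3}. Treating $(\beta_1,\dots,\beta_{K-1})$ as the latent block and $Y$ as the complete data: for fixed $\chi$ the working conditional posterior factorizes as $\prod_{j=1}^{K-1}\mathrm{N}_p\{\mu_\alpha(\chi_j),\Sigma_\alpha(\chi_j)/\alpha\}$, with $\Sigma_\alpha(\chi_j)$ and $\mu_\alpha(\chi_j)$ read off from $\ell_j$ exactly as \eqref{eq:muxisigxi} was read off from \eqref{sec_TTA_eqn2}. The EM objective \eqref{eq:EMQ} therefore decomposes as $Q_\alpha(\chi^{t+1}\mid\chi^t)=\sum_{j=1}^{K-1}Q_{\alpha,j}(\chi_j^{t+1}\mid\chi_j^t)+\mbox{Const.}$, with each $Q_{\alpha,j}$ of the form \eqref{eq:EMQ}. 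Differentiating the $j$-th summand in $\chi_j^{t+1}$ and using $C^{\prime}(x)=-x^2A^{\prime}(x)$, exactly as in the passage from \eqref{eq:EMQ} to \eqref{sec_TTA_eqn3}, produces $(\chi_j^{t+1})^2=\mathrm{diag}[\bX\{\Sigma_\alpha(\chi_j^t)/\alpha+\mu_\alpha(\chi_j^t)\mu_\alpha^{\T}(\chi_j^t)\}\bX^{\T}]$ for each $j$ independently, which is \eqref{sec_EML_eqn4}.

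The only genuinely new ingredient relative to Section~\ref{sec:tt} is the separable minorant of the softmax normalizer; after that the argument is bookkeeping. The point I would check most carefully is that the cross terms $(\bx_i^{\T}\beta_j)(\bx_i^{\T}\beta_{j'})$, $j\ne j'$, truly vanish from the minorant --- they do precisely because the product inequality replaces a single $\log$ of a sum by a sum of $\log$'s \emph{before} the quadratic bound is applied, so no quadratic form ever mixes two different $\beta_j$'s. As in the binary case, this minorant is not tight (it is an equality only when $K=2$), but, just as with $p_{\l}^{\alpha}$ there, tightness is irrelevant for \emph{defining} the EM recursion, so \eqref{sec_EML_eqn4} is valid as stated. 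The fractional version ($\alpha\in(0,1)$) requires no change beyond carrying the factor $\alpha$ through, exactly as in \eqref{sec_TTA_eqn1}--\eqref{sec_TTA_eqn3}.
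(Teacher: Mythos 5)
Your proposal is correct and follows essentially the same route as the paper: decouple the softmax normalizer via the one--vs--each bound $\log\bigl(1+\sum_{j}e^{\bx_i^{\T}\beta_j}\bigr)\le\sum_{j}\log\bigl(1+e^{\bx_i^{\T}\beta_j}\bigr)$ (the paper's \eqref{sec_EML_eqn2}, cited from \cite{aueb2016one}), then apply the Jaakkola--Jordan quadratic minorant per class and replay the binary E- and M-steps coordinate-wise. The only addition is your elementary one-line justification of that inequality via $1+\sum_j t_j\le\prod_j(1+t_j)$, which the paper simply cites.
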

\begin{proof}
We begin with the log-fractional likelihood,
\begin{eqnarray}\label{sec_EML_eqn1}
        \log{p^{\alpha}(y \mid \bX,\beta)} &=&   {\alpha}\,\sum^n_{i=1} \sum^{K-1}_{j=1} x^\T_i\beta_j \ind{[y_i=j]} - {\alpha}\,\sum^n_{i=1} \log \big(1+ \sum^{K-1}_{j=1} e^{x^\T_i\beta_j}\big),
\end{eqnarray}
\eqref{sec_EML_eqn1} poses the same difficulty of intractability as \eqref{sec_TTA_eqn1}. Moreover, the logistic term can not be optimized straightaway due to sum of exponents inside the logistic function.  Various methods have been propose to circumvent this issue; Taylor approximation to the $\log$-sum-$\exp$ term \citep{braun2010variational}, Quasi-Monte-Carlo \citep{lawrence2004reducing}, Jensen's inequality \citep{blei2007correlated}, quadratic approximation \citep{bouchard2008efficient,jebara2012majorization}.  
In the following, we use a very recent method by \cite{aueb2016one} which has been numerically shown to outperform the others which is based on the following inequality, 
\begin{eqnarray}\label{sec_EML_eqn2}
\sum^{K-1}_{j=1}\log\p{1+ e^{\bx^\T_i\beta_j}} \geq  \log\bigg(1+ \sum^{K-1}_{j=1} e^{\bx^\T_i\beta_j}\bigg).
\end{eqnarray}
In an ideal scenario, if $y_i = l$ both the terms in the inequality above should be $\approx \log(1 + \exp(\bx^\T_i \beta_l))$. This inequality is not too loose in that sense.  Using \eqref{sec_EML_eqn2} in \eqref{sec_EML_eqn1}, we get a lower bound to $p^{\alpha}(y \mid \bX,\beta )$ given by,
\begin{eqnarray}\label{sec_EML_eqn3}
    \log{p^{\alpha}(y \mid \bX,\beta)} &\geq&  {\alpha}\, \sum^{K-1}_{j=1} \bigg\{\sum^n_{i=1}\bx^\T_i\beta_j\ind{[y_i=j]} - {\alpha}\, \sum^n_{i=1}\log\p{1+ e^{\bx^\T_i\beta_j}}\bigg\}.
\end{eqnarray}
Now we use the quadratic bound proposed by \cite{jaakkola2000bayesian} on the right hand side of the above inequality. 
This leads to a lower bound to $\log{p^{\alpha}(y \mid \bX,\beta)}$ similar to \eqref{sec_TTA_eqn2} where
\begin{align*}
\log{{p}_{\l}^\alpha(y,\beta \mid  \bX,\chi)} = &{}\sum^{K-1}_{j=1}\trd{\alpha\,\bigg\{Y^\T_j\bX\beta_j + {(\bX\beta_j)}^{\T} \text{diag}\{A(\chi_{ij})\} (X\beta_j) - \frac{1}{2}\ind^{\T}_n X \beta_j + \ind^{\T}_n C(\chi_{ij})\bigg\}}\\ 
&-   \sum^{K-1}_{j=1}\trd{\frac{1}{2}\p{\beta_j - \mu_j}^\T\Sigma^{-1}_j\p{\beta_j - \mu_j}} + \mbox{Const.},
\end{align*}
for fixed $j \in \set{1,2,\ldots ,K-1}$ the updates are exactly similar to the updates in logistic version. Moreover, updates to $\chi_{j}^{\T} = \trd{\chi_{1j},\chi_{2},\ldots,\chi_{nj}}^{\T}$ are independent over $j \in \set{1,2,\ldots ,K-1}$. Following the similar E-step and M-step for the logistic version as in \eqref{eq:EMQ}-\eqref{sec_TTA_eqn3}, it can be easily seen that for fixed $j$ the update equation is given by \eqref{sec_EML_eqn4}.     
\end{proof}

As the updates across each level $j\in \set{1,2,\ldots,K-1}$ are independent and the behavior of the updates is exactly similar to the binary setup in \eqref{sec_TTA_eqn3}, this leads us to the following theorem that guarantees the local asymptotic stability of EM updates in Lemma \ref{sec_EML_lemma1}.
\begin{theorem}
Suppose the design matrix $\bX$ does not have any row equal to the zero vector. For any $\alpha \in (0, 1]$ and positive definite $\Sigma_\beta$, any fixed point solution  $\chi^*_{j}$ of the EM sequence in \eqref{sec_EML_eqn4} 
is locally asymptotically stable. 
\end{theorem}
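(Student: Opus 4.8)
The plan is to reduce the multinomial problem to $K-1$ independent copies of the binary logistic problem already settled in Theorem \ref{main_VB_1}. By Lemma \ref{sec_EML_lemma1}, the working lower bound $\log {p}_{\l}^{\alpha}(y,\beta\mid\bX,\chi)$ splits additively over $j\in\set{1,\dots,K-1}$, each summand depending only on the pair $(\beta_j,\chi_j)$; consequently the conditional (pseudo-)posterior of $\beta$ factorizes as $\prod_{j=1}^{K-1}\mbox{N}_p(\mu_{\alpha}(\chi_j),\Sigma_{\alpha}(\chi_j)/\alpha)$, the EM $Q$-function decomposes into a sum of per-class $Q$-functions, and the M-step \eqref{sec_EML_eqn4} updates each $\chi_j$ using only its own previous value $\chi_j^t$. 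Thus, writing $\lambda^t=((\chi_1^t)^2,\dots,(\chi_{K-1}^t)^2)$, the full EM map (in the squared coordinates) is the Cartesian product of the $K-1$ coordinate maps $g^{(j)}$, each of which is exactly the binary logistic EM map \eqref{sec_TTA_eqn3} with $(\Sigma_\beta,\mu_\beta,y)$ replaced by $(\Sigma_j,\mu_j,Y_j)$.

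First I would record that at any fixed point $\chi^*_j$, positive definiteness of $\Sigma_{\alpha}(\chi^*_j)=[\Sigma_j^{-1}/\alpha-2\bX^{\T}\mbox{diag}\{A(\chi^*_j)\}\bX]^{-1}$ together with the assumption that $\bX$ has no zero row forces $\chi^*_{ij}>0$ for every $i$ (the same argument used in the outline of Theorem \ref{main_VB_1}, invoking $A(0)=-1/8<0$ and the properties in Proposition \ref{sec_TTA_prop1}). Hence the map is twice continuously differentiable in a neighborhood of $\chi^*=(\chi^*_1,\dots,\chi^*_{K-1})$, so Lemma \ref{sec_NOS_lemma21} applies. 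Next, because the map is a product map, its Jacobian at $\chi^*$ is block-diagonal, $\mathbf{J}=\mbox{diag}(\mathbf{J}^{(1)},\dots,\mathbf{J}^{(K-1)})$, where each block $\mathbf{J}^{(j)}$ is precisely the Jacobian of the form \eqref{eq:Jacobian} for the $j$-th binary subproblem. Therefore $\rho(\mathbf{J})=\max_{1\le j\le K-1}\rho(\mathbf{J}^{(j)})$.

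Finally, I would invoke Theorem \ref{main_VB_1} separately for each $j$: its hypotheses (no zero row in $\bX$, $\alpha\in(0,1]$, positive definite $\Sigma_j$) hold, so $\rho(\mathbf{J}^{(j)})<1$ for every $j$, whence $\rho(\mathbf{J})<1$. Local asymptotic stability of $\chi^*$ then follows from Lemma \ref{sec_NOS_lemma21}.

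\emph{Main obstacle.} There is essentially no new analytic difficulty: the content lies entirely in verifying that the decoupling asserted in Lemma \ref{sec_EML_lemma1} is exact — i.e., that the per-class E- and M-steps genuinely do not interact, so that the block-diagonal structure of the Jacobian is legitimate and the conditional posterior really does factorize over $j$. Once that bookkeeping is in place, the crucial spectral-radius bound $\rho(\mathbf{J}^{(j)})<1$ is inherited verbatim from the binary analysis (Schur product theorem, similarity to a symmetric positive semi-definite matrix, Lemmas on Hadamard products and on $\mathrm{M}^{-1/2}\mathrm{N}\,\mathrm{M}^{-1/2}$, and the inequality $A(x)+xA^{\prime}(x)<0$), with $\Sigma_\beta$ replaced by $\Sigma_j$ throughout.
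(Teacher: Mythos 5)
Your proposal is correct and follows essentially the same route as the paper: the paper's proof likewise observes that for each fixed $j$ the update \eqref{sec_EML_eqn4} and its fixed-point equation reduce exactly to \eqref{sec_TTA_eqn3} and \eqref{eq:fp} with $(\Sigma_j,\mu_j,Y_j)$ in place of $(\Sigma_\beta,\mu_\beta,y)$, and then invokes Theorem \ref{main_VB_1} directly. Your additional remarks about the block-diagonal Jacobian and $\rho(\mathbf{J})=\max_j\rho(\mathbf{J}^{(j)})$ are a harmless (and slightly more explicit) elaboration of the same decoupling argument.
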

\begin{proof}
For each fixed $j \in \set{1,2, \ldots,K-1}$, the fixed point equation is,
\begin{align}\label{sec_EML_fp}
      (\chi^{*}_{j})^2 = \mbox{diag}[\bX\{ \Sigma_{\alpha}(\chi^*_{j}) + \mu_{\alpha}(\chi^*_{j})\mu_{\alpha}^{\T}(\chi^*_{j})\}\bX^{\T}].
\end{align}
 Call $\xi^* = \chi^*_{j}$ and $\xi^t = \chi^t_{j}$. Then,  \eqref{sec_EML_eqn4} and \eqref{sec_EML_fp} reduces to \eqref{sec_TTA_eqn3} and \eqref{eq:fp} respectively. Now, we directly apply Theorem \ref{main_VB_1} to conclude that the updates in \eqref{sec_EML_eqn4} are locally asymptotically stable. 
\end{proof}

\section*{Acknowledgements}
Dr. Pati and Dr. Bhattacharya acknowledge support from NSF DMS (1613156, 1854731, 1916371) and NSF CCF 1934904 (HDR-TRIPODS). In addition, Dr. Bhattacharya acknowledges NSF CAREER 1653404 award for supporting this project.

\appendix

\section{Proof of Statistical optimality results in Section \ref{sec:statopt}}
In the following, we first provide the proofs of Theorems \ref{thm:statopt} and \ref{thm:statopt2} in \S \ref{ssec:statopt1} and \ref{ssec:statopt2} respectively and then provide the proofs of some of the auxiliary results used in subsequent \S \ref{aux:1}. 
\subsection{Proof of Theorem \ref{thm:statopt}}\label{ssec:statopt1}
The proof consists of two major steps.  \\ \\
{\em Risk majorization.}  In this first step, we obtain an upper bound to the integrated risk in terms of easily controllable quantities. 
We denote $\bbE_{\beta^*}$ as taking expectation under \eqref{eq:true}.  From  the definition of the $\alpha$-Renyi divergence  and the fact that ${p}_{\l}$ lower bounds $p(y \mid \beta, \bX)$ 
\begin{align*}
\bbE_{\beta^*} \exp\Big\{\alpha \,  \log\frac{{p}_{\l}(y\,\mid\,\beta, \xi, \bX)}{p(y\,\mid\,\beta^\ast, \bX)}\Big\} \leq \bbE_{\beta^*} \exp\Big\{\alpha \,  \log\frac{p(y \,\mid\, \beta, \bX)}{p(y \,\mid\, \beta^\ast, \bX)}\Big\}
 = e^{- n(1 - \alpha) \mbox{D}_{\alpha}(\beta, \beta^\ast) }.
\end{align*}
Thus, for any $\varepsilon \in (0, 1)$, we have
\begin{align*}
\bbE_{\beta^\ast} \exp \bigg[\alpha \,  \log\frac{{p}_{\l}(y\,\mid\,\beta, \xi, \bX)}{p(y\,|\,\beta^\ast, \bX)} + n (1 - \alpha) \mbox{D}_{\alpha}(\beta, \beta^\ast) - \log(1/\varepsilon) \bigg] \le \varepsilon. 
\end{align*}
Integrating both side of this inequality with respect to the prior $\pi_\beta$ and interchanging the integrals using Fubini's theorem, we obtain
\begin{align*}
\bbE_{\beta^\ast} \int \exp\bigg[\alpha \,  \log\frac{{p}_{\l}(y\,|\,\beta, \xi, \bX)}{p(y\,|\,\beta^\ast, \bX)} + n(1 - \alpha) \mbox{D}_{\alpha}(\beta, \beta^\ast) - \log(1/\varepsilon)\bigg] \pi_\beta(\beta)\,d\beta \le \varepsilon. 
\end{align*}
Now, recall the {\em variational inequality} for a probability measure $\mu$ and for $h$ such that  $e^h$ is integrable, 
\begin{align} \label{eq:vi}
\log \int e^h d\mu = \sup_{\rho \ll \mu} \bigg[ \int h d\rho - D(\rho \vert \vert \mu) \bigg].
\end{align}
Using \eqref{eq:vi}, 
\begin{align*}
\bbE_{\beta^\ast} \exp \sup_{q \ll \pi_\beta} \bigg[\int \bigg\{ \alpha \,  \log\frac{{p}_{\l}(y\,\mid\,\beta, \xi, \bX)}{p(y\,\mid\,\beta^\ast, \bX)} + n(1 - \alpha) \mbox{D}_{\alpha}(\beta, \beta^\ast)  - \log(1/\varepsilon)\bigg\}\,q(\beta)\,d\beta - \mbox{D}(q \,\vert \vert \,\pi_\beta)  \bigg] \le \varepsilon.
\end{align*}
If we choose $\rho = q^*_\beta \equiv \phi_p \big\{ \beta; \mu_{\alpha}(\xi^*), \Sigma_{\alpha}(\xi^*) \big\}$ as the variational approximation  and set $\xi = \xi^*$
\begin{align*}
\bbE_{\beta^\ast} \exp \bigg[\int \bigg\{ \alpha \,  \log\frac{{p}_{\l}(y\,\mid\,\beta, \xi^*, \bX)}{p(y\,\mid\,\beta^\ast, \bX)} + (1 - \alpha) \mbox{D}_{\alpha}(\beta, \beta^\ast)  - \log(1/\varepsilon)\bigg\} \,q^*_\beta(\beta)\,d\beta &- \mbox{D}(q^*_\beta \,\vert \vert\, \pi_\beta)  \bigg]\\ &\le \varepsilon. 
\end{align*}
By applying Markov's inequality, we further obtain that with $\bbP_{\beta^\ast}$ probability at least $(1 - \varepsilon)$, 
\begin{align*}
n(1 - \alpha) \int \mbox{D}_{\alpha}(\beta, \beta^\ast) \, q^*_\beta(\beta)\,d\beta
& \le - \alpha  \int_\beta  \log\frac{{p}_{\l}(y\,\mid\,\beta, \xi^*, \bX)}{p(y\,|\,\beta^\ast, \bX)} \,q^*_\beta(\beta)\,d\beta+ \mbox{D}(q^*_\beta \, \vert \vert\, \pi_\beta) + \log(1/\varepsilon).
\end{align*}
Now using the Lemma \ref{eq:varsoln}, 
\begin{align}\label{eq:riskmajor}
  &-\alpha\, \int_\beta  \log\frac{{p}_{\l}(y\,\mid\,\beta, \xi^*, \bX)}{p(y\,\mid\,\beta^\ast,\bX)} \,q^*_\beta(\beta)\,d\beta+ \mbox{D}(q^*_\beta\, \vert \vert\, \pi_\beta) \nonumber\\
 &= \inf_{q, \xi}  \bigg\{-\alpha\, \int_\beta  \log\frac{{p}_{\l}(y\,\mid\,\beta, \xi, \bX)}{p(y\,|\,\beta^\ast, \bX)} \,q(\beta)\,d\beta+ \mbox{D}(q\, \vert \vert\, \pi_\beta)\bigg\}. 
\end{align}

\noindent {\em Optimizing the majorized risk.}
Our second step consists of optimizing the term obtained in \eqref{eq:riskmajor} by choosing  suitable candidates for $q$ and $\xi$. We refer to them as $\tilde{q}$ and $\tilde{\xi}$.  The idea is to choose $\tilde{q}$ and $\tilde{\xi}$  so that $\tilde{q}$ places almost all its mass into a small neighborhood around truth $\bX \beta^*$, so that the first term in the right hand side of \eqref{eq:riskmajor} becomes small; on the other hand, the neighborhood is large enough so that the second regularization term $n^{-1}\,D(q\, ||\, \pi_\beta)$ is not too large. 
We choose $\tilde{q}$ first and $\tilde{\xi}$ later.  Let $\tilde{q}$
\begin{align}
\tilde{q}(\beta) = \frac{\pi_\beta(\beta)}{\pi_\beta\big[\mathcal{B}_n(\beta^\ast,\,\varepsilon)\big]}\, I_{\mathcal{B}_n(\beta^\ast,\,\varepsilon)}(\beta),\quad\forall \beta\in\beta, \label{Eqn:KL_res}
\end{align}
be the restriction of the prior density $\pi_\beta$ into the KL neighborhood $\mathcal{B}_n(\beta^\ast,\,\varepsilon)$ around $\beta^\ast$ with radius $\varepsilon$ defined as
\begin{align*}
\mathcal{B}_n(\beta^\ast,\varepsilon) &=  \Big\{n^{-1}\,\widetilde{\mbox{D}}\big[ p(\cdot\mid\beta^\ast, \bX)\, \big|\big| \, {p}_{\l}(\cdot\mid\beta, \xi, \bX)\big]\leq \varepsilon^2,\,\, n^{-1}\,\mbox{V}\big[ p(\cdot\mid\beta^\ast, \bX)\, \big|\big| \,  {p}_{\l}(\cdot\mid\beta, \xi, \bX)\big] \leq \varepsilon^2\Big\},
\end{align*}
where for two non-negative functions $f, g$, 
$\widetilde{\mbox{D}}(f \,\| \,g) = \int f | \log (f/g)|$ and  $\mbox{V}(f, g) := \int f (\log f/g)^2 - \widetilde{\mbox{D}}(f \,\| \,g)^2$. Note that $\tilde{\mbox{D}}(f \,|\,g)$ is an extension of the usual KL distance for probability measures to positive functions which may not integrate to one.   With this substitution, the second term in \eqref{eq:riskmajor} becomes the negative log prior mass $[n\,(1-\alpha)]^{-1}\, \log \big\{\pi_\beta[ \mathcal{B}_n(\beta^\ast,\,\varepsilon)]\big\}^{-1}$ and it remains to provide a high-probability bound for the first term and an upper bound for the log-prior concentration term $\log \big\{\pi_\beta[ \mathcal{B}_n(\beta^\ast,\,\varepsilon)]\big\}$.  

\noindent {\em i) High probability upper bound for the first term in \eqref{eq:riskmajor}}. 
By applying Fubini's theorem and invoking the definition of $\mathcal{B}_n(\beta^\ast,\,\varepsilon)$, we have
\begin{align*}
&\bbE_{\beta^\ast}\bigg[\int_\beta \tilde{q}(\beta)\, \log\frac{{p}_{l}(y\,\mid\,\beta, \xi, \bX)}{p(y\,\mid\,\beta^\ast, \bX)}\, d\beta\bigg] 
= \int_\beta  \bbE_{\beta^\ast}\bigg[ \log\frac{{p}_{\l}(y\,\mid\,\xi, \beta, \bX)}{p(y\,\mid\,\beta^\ast, \bX)} \bigg]\, \tilde{q}(\beta)\,d\beta\\
\leq &\, \int_{ \mathcal{B}_n(\beta^\ast,\, \varepsilon)} \widetilde{\mbox{D}}\big[ p(\cdot\,\mid\,\beta^\ast, \bX)\, \big|\big| \, {p}_{\l}(\cdot\,\mid\,\beta, \xi, \bX)\big]\, \tilde{q}(\beta)\,d\beta \leq n\,\varepsilon^2.
\end{align*} 
Similarly, we have the following bound for the second moment by applying the Cauchy-Schwarz inequality,
\begin{align*}
&\mbox{Var}_{\beta^\ast}\bigg[\int_\beta \tilde{q}(\beta)\, \log\frac{{p}_{\l}(y\,\mid\,\beta, \xi, \bX)}{p(y\,\mid\,\beta^\ast, \bX)}\, d\beta\bigg]\leq \int_{\mathcal{B}_n(\beta^\ast,\, \varepsilon)} V\big[ p(\cdot\,\mid\,\beta^\ast, \bX)\, \big|\big| \, p(\cdot\,\mid\,\beta, \xi, \bX)\big]\, \tilde{q}(\beta)\,d\beta \leq n\,\varepsilon^2.
\end{align*} 

Putting pieces together, applying Chebyshev's inequality, we obtain
\begin{align*}
& \bbP_{\beta^\ast} \bigg\{\int_\beta \tilde{q}(\beta)\, \log\frac{{p}_{\l}(y\,\mid\,\beta, \xi, \bX)}{p(y\,\mid\,\beta^\ast,\bX)}\, d\beta \leq - D\,n \,\varepsilon^2\bigg\}  \\
& \le \bbP_{\beta^\ast} \bigg\{\int_\beta \tilde{q}(\beta)\, \log\frac{{p}_{\l}(y\,\mid\,\beta, \xi, \bX)}{p(y\,\mid\,\beta^\ast, \bX)}\, d\beta - \bbE_{\beta^\ast}\Big[\int_\beta \tilde{q}(\beta)\, \log\frac{{p}_{\l}(y\,\mid\,\beta, \xi, \bX)}{p(y\,\mid\,\beta^\ast, \bX)}\, d\beta\Big] \leq -(D-1) \,n \,\varepsilon^2\bigg\}  \\
& \le \frac{\mbox{Var}_{\beta^\ast}\big[\int_\beta \tilde{q}(\beta)\, \log\frac{{p}_{\l}(y\,\mid\,\beta,\xi, \bX)}{p(y\,\mid\,\beta^\ast,\bX)}\, d\beta\big] }{(D-1)^2\, n^2\, \varepsilon^4} 
\le \frac{1}{(D-1)^2\, n \, \varepsilon^2}. 
\end{align*}
It follows with probability $1- 1/ \{(D-1)^2\, n \, \varepsilon^2)\}$, 
the first term of \eqref{eq:riskmajor} evaluated at $q = \tilde{q}$ satisfies
\begin{align*}
 - \alpha \int_\beta \tilde{q}(\beta)\, \log\frac{{p}_{\l}(y\,\mid\,\beta, \xi, \bX)}{p(y\,\mid\,\beta^\ast, \bX)}\, d\beta  \leq  Dn \alpha \, \varepsilon^2. 
\end{align*}

\noindent {\em ii) Upper bound for the negative $\log$-prior concentration term $-\log \big\{\pi_\beta[ \mathcal{B}_n(\beta^\ast,\,\varepsilon)]\big\}$}.  
We first obtain an upper bound for the $\log$-pseudo-likelihood ratio
\begin{align*}
 \log\frac{{p}_{\l}(y\,\mid\,\beta, \xi, \bX)}{p(y\,\mid\,\beta^\ast, \bX)} &=  
   y^{\T}\bX(\beta - \beta^*)  
       + \beta^{\T}\left\{\bX^{\T}\text{diag}\{A(\xi)\}\bX\right\}\beta + 0.5\ind_n^{\T}\bX \\
&+ \ind^{\T}_n C(\xi) +  \ind_n^{\T} \log \{ 1+ \exp(\bX \beta^*)\}. 
\end{align*}

To obtain the lower bound ${p}_{\l}(y \mid \xi, \beta, \bX)$ of 
$p(y \mid  \beta, \bX)$, 
\cite{jaakkola2000bayesian} used $- \log \{1+\exp(-x)\} = x/2 - \log(e^{x/2} + e^{-x/2})$ and noted 
 $f(x) = - \log(e^{x/2} + e^{-x/2})$ is a convex function in the variable $x^2$. 
 Since a tangent surface to a
convex function is a global lower bound for the function, we can bound $f (x)$ globally
with a first order Taylor expansion in the variable of $x^2$ around $\xi^2$ as
\begin{align}\label{eq:convex_gap}
f(x) &\geq f(\xi)  + \frac{d f(\xi)}{d\xi^2} (x^2 - \xi^2)\nonumber \\
 &=   - \log \{1+\exp(-\xi)\} - \xi/2 - \frac{1}{4\xi} \tanh(\xi/2)(x^2 - \xi^2). 
\end{align}
To quantify the gap  $\Delta(\beta, \beta^*):= \log{p}_{\l}(y\,\mid\,\beta, \xi, \bX)  - \log p(y\,\mid\,\beta^\ast, \bX)$, observe that
\begin{align*}
\Delta(\beta, \beta^*) &=  \log p(y\,\mid\,\beta, \bX)  - \log p(y\,\mid\,\beta^\ast, \bX) +
  \log{p}_{\l}(y\,\mid\,\beta, \xi, \bX) - \log p(y\,\mid\,\beta, \bX)  \\
  &:=  y^\T \bX(\beta - \beta^*) + \ind^{\T}_n [\log (1+ \exp(\bX\beta^*)  - \log (1+ \exp(\bX\beta)] + \Delta 
\end{align*}
where $\Delta$ is the {\em Jensen-Gap} in \eqref{eq:convex_gap}.  To estimate $\Delta$, we perform a second order Taylor-expansion around $\xi^2$
\begin{eqnarray*}
f(x) = f(\xi)  + \frac{d f(\xi)}{d\xi^2} (x^2 - \xi^2) + \frac{1}{2} \frac{d^2 f(\xi)}{d\xi^4} |_{\xi = \hat{\xi}} (x^2 - \xi^2)^2. 
\end{eqnarray*}
Observe further, 
\begin{eqnarray*}
\frac{d f(\xi)}{d\xi^2} = - \frac{1}{4\surd{\xi^2}} \tanh \frac{\surd{\xi^2}}{2}, \quad \frac{d^2 f(\xi)}{d\xi^4} = - \Big[\frac{0.0625\, \mbox{sech}^2(\surd{\xi^2}/2)}{\xi^2} - \frac{0.125 \tanh(\surd{\xi^2}/2)}{(\xi^2)^{1.5}} \Big].  
\end{eqnarray*}
Moreover, $d^2 f(\xi)/ d\xi^4$ is a decreasing function of $\xi^2$ and $0 < d^2 f(\xi)/d\xi^4 < 1$. Hence $
 \Delta  \leq  \sum_{i=1}^n \{ (\bx_i^{\T}\beta)^2 - \xi_i^2\}^2.
$
Setting $\xi_i = \bx_i^{\T} \beta^*$ for all $i=1, \ldots, n$, we have 
\begin{eqnarray*}
\Delta &\leq& \sum_{i=1}^n \{\bx_i^{\T}(\beta - \beta^*)\}^2 \{\bx_i^{\T}(\beta - \beta^*) + 2 \bx_i^{\T} \beta^* \}^2  \\
&\leq& 2 \sum_{i=1}^n \{\bx_i^{\T}(\beta - \beta^*)\}^4 + 8\sum_{i=1}^n \{\bx_i^{\T} \beta^* \}^2\{\bx_i^{\T}(\beta - \beta^*)\}^2.   \\
&\leq& 2n \| \bX\|_{2, \infty}^4 \|\beta - \beta^*\|^4 + 8n  \| \bX\|_{2, \infty}^4 \|\beta^*\|_2^2 \|\beta - \beta^*\|^2,
\end{eqnarray*}
where the final inequality follows from $\bx_i^{\T}(\beta - \beta^*) \leq \| \bX\|_{2, \infty} \|\beta - \beta^*\|$.  Plugging in the bound obtained above in $\Delta(\beta, \beta^*)$, we get 
\begin{eqnarray*}
\Delta(\beta, \beta^*) &\leq& 
   y' \bX(\beta - \beta^*) + \ind^{\T}_n [\log (1+ \exp(\bX\beta^*)  - \log (1+ \exp(\bX\beta)] + \sum_{i=1}^n \{ (\bx_i^{\T}\beta)^2 - (\bx_i^{\T}\beta^*)^2\}^2 \\
 & \leq&  \sum_{i=1}^n \{ y_i + 1 \} \bx_i^{\T} (\beta - \beta^*) + 2n \| \bX\|_{2, \infty}^4 \|\beta - \beta^*\|^4 + 8n  \| \bX\|_{2, \infty}^4 \|\beta^*\|_2^2 \|\beta - \beta^*\|^2 \\
   & \leq&  2n  \| \bX\|_{2, \infty} \|\beta - \beta^*\| + 2n \| \bX\|_{2, \infty}^4 \|\beta - \beta^*\|^4 + 8n  \| \bX\|_{2, \infty}^4 \|\beta^*\|_2^2 \|\beta - \beta^*\|^2. 
\end{eqnarray*}
where the last inequality is obtained by noting that $\log(1+ e^x)$ is a $1$-Lipschitz function. Recall that $L(\beta^*, \bX) = \max\{4\| \bX\|_{2, \infty}, 8 \| \bX\|_{2, \infty}^2 \|\beta^*\|_2 \}$. If $\|\beta  - \beta^*\| < \varepsilon^2/L(\beta^*, \bX)$, then
$\Delta(\beta, \beta^*) \leq n \varepsilon^2$ which implies 
\begin{align*}
n^{-1}\,\widetilde{\mbox{D}}\big[ p(\cdot\,\mid\,\beta^\ast, \bX)\, \big|\big| \, {p}_{\l}(\cdot\,\mid\,\beta, \xi, \bX)\big]\leq \varepsilon^2.
\end{align*}  
Also, since 
\begin{align*}
V \big[ p(y\,\mid\,\beta^\ast, \bX)\, \big|\big| \, p(y\,\mid\,\beta, \xi, \bX)\big]= n 
V \big[ p(y_1\,\mid\,\beta^\ast, \bx_1)\, \big|\big| \, p(y_1\,\mid\,\beta, \xi, \bx_1)\big],
\end{align*}
following the same argument as before but with one observation, we have
\begin{align*}
\Delta_1(\beta, \beta^*)&:= \log{p}_{\l}(y_1\,|\,\beta, \xi, \bx_1)  - \log p(y_1\,|\,\beta^\ast, \bx_1) \\
& \leq 2 \| \bX\|_{2, \infty} \|\beta - \beta^*\| + 2 \| \bX\|_{2, \infty}^4 \|\beta - \beta^*\|^4 + 8  \| \bX\|_{2, \infty}^4 \|\beta^*\|_2^2 \|\beta - \beta^*\|^2, 
\end{align*} 
which implies if $\|\beta  - \beta^*\| < \varepsilon^2/L(\beta^*, \bX)$, then $
n^{-1}\,\mbox{V}\big[ p(\cdot\,\mid\,\beta^\ast, \bX)\, \big|\big| \, {p}_{\l}(\cdot\,\mid\,\beta, \xi, \bX)\big] \leq \varepsilon^2$. Hence 
\begin{align*}
-\log \big\{\pi_\beta[ \mathcal{B}_n(\beta^\ast,\,\varepsilon)]\big\} \leq 
-\log\pi_\beta \big\{\|\beta  - \beta^*\| < \varepsilon^2/L(\beta^*, \bX)\big\} \leq p \log 
\Big\{\frac{L(\beta^*, \bX)}{\varepsilon^2} \Big\} + \frac{1}{2}\beta^*\Sigma_\beta^{-1} \beta^*, 
\end{align*}
where the final inequality holds using using multivariate Gaussian concentration through Anderson's inequality.   

\subsection{Proof of Theorem \ref{thm:statopt2} }\label{ssec:statopt2}
We start by rewriting the $\log$-likelihood ratio as
\begin{align*}
\log \frac{p(y \mid \beta, \bX)}{p(y \mid \beta^*, \bX)} &= (\beta - \beta^\ast)^{\T} \bX^\T y - \sum_{i=1}^n [a(\bx_i^{\T}\beta) - a(\bx_i^{\T}\beta^\ast)] \\
&= \langle y - \mb E y, \bX(\beta - \beta^\ast)\rangle - n\,D(\beta^\ast, \beta). 
\end{align*}
 Since $a(t) = \log(1 + e^t)$ satisfies $a(t + h) \ge a(t) + h \, a^{(1)}(t) + \r(|h|) \, a^{(2)}(t)/2$ for all $t, h$,  where $r(h) = h^2/(\r_1 h + 1)$ for $\r_1 > 0$, we have 
 \begin{eqnarray*}
n\ \mbox{D}(\beta^\ast, \beta)=   \sum_{i=1}^n \big\{ a(\bx_i^{\T}\beta) - 
a(\bx_i^{\T}\beta^*) - a^{(1)}(\bx_i^{\T}\beta^*) \bx_i^{\T}(\beta - \beta^*)\big\}  \geq \sum_{i=1}^n \r (|\bx_i^{\T}(\beta - \beta^*)|) a^{(2)}(\bx_i^{\T}\beta^*). 
\end{eqnarray*}
Defining $\k(h) = h^2/\r(h)$ and $W = \mbox{diag}[a^{(2)}(\bx_1^{\T}\beta^*), \ldots, a^{(2)}(\bx_n^{\T}\beta^*)]$, 
 \begin{eqnarray*}
n\ \mbox{D}(\beta^*, \beta) \geq (\beta - \beta^*)^{\T}  \bigg[\sum_{i=1}^n \frac{a^{(2)}(\bx_i^{\T}\beta^*)}{\k(|\bx_i^{\T}(\beta - \beta^*)|) } \bx_i \bx_i^{\T} \bigg] (\beta - \beta^*) \geq  \frac{(\beta - \beta^*)^{\T}\bX^{\T} W\bX  (\beta - \beta^*)}{1+ \r_1\|\bX\|_\infty \surd{p} \|\beta - \beta^*\|},
\end{eqnarray*}	
where the last inequality follows $|\bx_i^{\T}(\beta - \beta^*)| \leq \|\bX\|_\infty \surd{p}\|\beta - \beta^*\|$.  
Define $\Omega_n$ be the set 
\begin{eqnarray*}
\max_{1\leq j \leq p } \Big|\sum_{i=1}^n (y_i - \mb E y_i) \, x_{ij}\Big| \leq \|\bX\|_\infty ({n \log p})^{1/2}/2.
\end{eqnarray*}
Setting  $\lambda_p :=  \|\bX\|_\infty (2np \log p)^{1/2}/4$, it follows  that in $\Omega_n$,  $\langle y - \mb E y, \bX(\beta - \beta^\ast) \rangle \leq \lambda_p \| \beta - \beta^*\|$. From the assumption in Theorem \ref{thm:statopt2}, we have 
$n\ \mbox{D}(\beta^\ast, \beta) > 2\langle y - \mb E y, \bX(\beta - \beta^\ast) \rangle $ inside $\Omega_n$ which further leads to  
\begin{eqnarray*}
\log \frac{{p}_{\l}(y \mid \beta, \xi, \bX)}{p(y \mid \beta^*, \bX)} \ind_{\Omega_n}   \leq \log \frac{p(y \mid \beta, \bX)}{p(y \mid \beta^*, \bX)} \ind_{\Omega_n}  \leq -n\ \mbox{D}(\beta^\ast, \beta). 
\end{eqnarray*}
For any $\epsilon > 0$, define $B(\beta_1; \epsilon) = \{{p}_{\l}(\cdot \mid \beta, \xi, \bX):  \| \beta - \beta_1 \| < \epsilon\}$. Denote by $\mathrm{conv} \{B(\beta_1; \epsilon)\}$ the convex hull of 
$B(\beta_1; \epsilon)$. Pick any $\beta_1$ such 
that $\|\beta_1 - \beta^*\| = r$.    Then, we have from Lemma 2 of \cite{bhattacharya2020nonasymptotic} and because of the assumption ${n}^{1/2} \geq p ({\log p})^{1/2}  \|X\|_\infty^2/\{\kappa_1\surd{2}\}$,
there exists measurable functions $0 \leq \Phi_n \leq 1$ such that for every $n \geq 1$ and $\gamma \in (0, 1)$ 
\begin{eqnarray}\label{eq:tests}
  \sup_{{p}_{\l}(\cdot \mid \beta, \xi, \bX) \in \mathrm{conv}\{B(\beta_1; r/2 )\}} \mb E_{\beta^\ast} \Phi_n + \mb E_{{p}_{\l}(\cdot \mid \beta, \xi, \bX)} (1- \Phi_n) &\leq&  \exp \big\{ -  (\gamma/2) n\ \mbox{D}( \beta^*, \beta_1) \big\}. 
 \end{eqnarray} 
We show in Lemma \ref{lem:test} that with high probability (w.r.t. $\mb P_{\beta^\ast}$), 
\begin{align}\label{pf_sketch_1}
\int \bigg[ \exp \big\{ \ell_n(\beta, \beta^\ast) + (\gamma /4) n\ \mbox{D}(\beta^\ast, \beta) \big\} \bigg] \,\pi_{\beta}(\beta) d\beta \le 2e^{n \kappa_2 \epsilon_n^2/2}, 
\end{align}
where  $\ell_n(\beta, \beta^\ast) = \log \{{p}_{\l}(y \mid \beta, \xi, \bX) / p(y \mid \beta^*, \bX)\}$.  
Next, we use the variational inequality  \eqref{eq:vi}
with $\mu = \pi_\beta$,  $\rho = \hat{q}_{\beta}$ to show with high probability
\begin{align*}
 \frac{\gamma}{4}\int n\ \mbox{D}(\beta^{\ast}, \beta) \, q^*_\beta(\beta)\,d\beta
& \le -   \int_\beta  \log\frac{{p}_{\l}(y\mid\beta, \xi, \bX)}{p(y\mid\beta^\ast, \bX)} \,q^*_\beta(\beta)\,d\beta+ \mbox{D}(q^*_\beta \, \vert \vert\, \pi_\beta) + n \kappa_2 \epsilon_n^2/2. 
\end{align*}
This brings us back to the proof of Theorem \ref{thm:statopt} and the remaining part of the proof of Theorem \ref{thm:statopt2} follows verbatim from the proof of Theorem \ref{thm:statopt}.

\subsection{Auxiliary results for proofs in Section \ref{sec:statopt}}\label{aux:1}
 \begin{lemma}\label{thm:cond}
 Let $u$ and $v$ denote two continuous random vectors with joint density function $p(u, v)$. The maximum value of 
 \begin{eqnarray*}
 \int q(u) \log \bigg\{\frac{p(u, v)}{q(u)} \bigg\} du
 \end{eqnarray*}
 over all density functions $q$ is attained by $q^*(u) = p(u \mid v)$.  
 \end{lemma}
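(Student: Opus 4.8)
The plan is to reduce this maximization to the nonnegativity of the Kullback--Leibler divergence. First I would fix $v$ (and assume $p(v) = \int p(u,v)\,du > 0$, otherwise there is nothing to prove) and treat $p(v)$ as a positive constant. Using the factorization $p(u,v) = p(u \mid v)\,p(v)$ and the fact that $q$ integrates to one, I rewrite the objective as
\begin{align*}
\int q(u) \log\frac{p(u,v)}{q(u)}\,du &= \int q(u) \log\frac{p(u \mid v)\,p(v)}{q(u)}\,du \\
&= \log p(v) + \int q(u)\log\frac{p(u \mid v)}{q(u)}\,du \\
&= \log p(v) - \mbox{D}\big(q \,\|\, p(\cdot \mid v)\big).
\end{align*}

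The next step is to invoke Gibbs' inequality: $\mbox{D}(q \,\|\, p(\cdot\mid v)) \ge 0$ for every density $q$, with equality if and only if $q = p(\cdot\mid v)$ (Lebesgue-almost everywhere). Consequently the objective is bounded above by $\log p(v)$, and this supremum is attained precisely at $q^*(u) = p(u\mid v)$, which is the claimed maximizer. If one wants the argument to be self-contained, Gibbs' inequality follows from Jensen's inequality applied to the convex map $t \mapsto -\log t$, namely $\mbox{D}(q \| r) = -\int q \log(r/q) \ge -\log\int r = 0$ on the support of $q$, with equality characterized by the strict convexity of $-\log$.

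There is essentially no hard step; the only point requiring a little care is the measure-theoretic bookkeeping—restricting attention to densities $q$ that are absolutely continuous with respect to $p(\cdot \mid v)$ (for other $q$ the objective is $-\infty$, so they are not maximizers), and tracking the equality case in Jensen's inequality to obtain uniqueness of the optimizer. This is the same elementary identity that underlies the ELBO decomposition used elsewhere in the paper, in particular in the proof of Lemma \ref{eq:varsoln}.
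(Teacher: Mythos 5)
Your argument is correct and is the standard one: decomposing the objective as $\log p(v) - \mbox{D}\big(q \,\|\, p(\cdot\mid v)\big)$ and invoking nonnegativity of the Kullback--Leibler divergence (with its equality case) immediately yields the claim. The paper states this lemma without proof, treating it as the well-known fact underlying the ELBO decomposition, so your write-up simply supplies the omitted details, including the appropriate care about absolute continuity and the equality case in Jensen's inequality.
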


\begin{lemma}\label{lem:test}
Fix any $\gamma \in (0, 1)$ and $\epsilon = \kappa_1 \varepsilon/(2\r_1 \|X\|_\infty \surd{p})$.   If  ${n}^{1/2} \geq p ({\log p})^{1/2}  \|X\|_\infty^2/\{\kappa_1\surd{2}\}$, then with probability $1- 2e^{- n \gamma \epsilon/8} - e^{-n \kappa_2 \varepsilon^2/2}$,  
\begin{align}\label{pf_sketch_1}
\int \exp \big\{ \ell_n(\beta, \beta^\ast) + (\gamma /4) n\ \mbox{D}(\beta^*, \beta) \big\} \,\pi_{\beta}(\beta) d\beta \le 2e^{n \kappa_2 \epsilon_n^2/2}. 
\end{align}
\end{lemma}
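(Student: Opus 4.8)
The plan is to decompose the integral over $\mathbb{R}^p$ as $\int\exp\{\ell_n(\beta,\beta^\ast)+(\gamma/4)\,n\,\mbox{D}(\beta^\ast,\beta)\}\,\pi_\beta(\beta)\,d\beta=I_1+I_2$, where $I_1$ is the part over a small neighbourhood $U$ of $\beta^\ast$ (of radius of order $\epsilon$) and $I_2$ the part over $U^c$. The neighbourhood $U$ is exactly where the exponentially consistent tests underlying \eqref{eq:tests} become useless --- their error $e^{-(\gamma/2)n\mbox{D}(\beta^\ast,\beta_1)}$ tends to $1$ as $\beta_1\to\beta^\ast$ --- so $I_1$ must be bounded by a direct first-moment argument, while $I_2$ is precisely what those tests are built to handle. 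I expect $I_1$ to be the binding term, forcing the $e^{n\kappa_2\varepsilon^2/2}$ on the right-hand side and contributing the $e^{-n\kappa_2\varepsilon^2/2}$ part of the exceptional probability, and $I_2$ to contribute the $2e^{-n\gamma\epsilon/8}$ part.

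\emph{The local term $I_1$.} Since ${p}_{\l}(\cdot\mid\beta,\xi,\bX)\le p(\cdot\mid\beta,\bX)$ pointwise, $e^{\ell_n(\beta,\beta^\ast)}\le p(y\mid\beta,\bX)/p(y\mid\beta^\ast,\bX)$, so $\bbE_{\beta^\ast}e^{\ell_n(\beta,\beta^\ast)}\le\int p(y\mid\beta,\bX)\,dy=1$ for every fixed $\beta$. On $U$, a second-order Taylor expansion of the logistic cumulant $a$, together with $a^{(2)}\le 1/4$ and the self-bounding bound $|\,d\log a^{(2)}/dt\,|\le 1$, gives a local quadratic upper bound $n\,\mbox{D}(\beta^\ast,\beta)\le c\,(\beta-\beta^\ast)^{\T}\bX^{\T}W\bX(\beta-\beta^\ast)$, so that, using the definition of $\epsilon$ and the smallness of $\varepsilon$, $e^{(\gamma/4)n\mbox{D}(\beta^\ast,\beta)}$ is bounded on $U$ by (a fraction of) $e^{n\kappa_2\varepsilon^2/2}$. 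Fubini's theorem then yields $\bbE_{\beta^\ast}I_1\le e^{n\kappa_2\varepsilon^2/2}\int_U\pi_\beta\le e^{n\kappa_2\varepsilon^2/2}$, and Markov's inequality delivers $I_1\le e^{n\kappa_2\varepsilon^2/2}$ off an event of probability $e^{-n\kappa_2\varepsilon^2/2}$.

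\emph{The tail term $I_2$.} Partition $U^c$ into shells $S_j$ on which $\mbox{D}(\beta^\ast,\cdot)$ grows by a fixed step, cover each $S_j$ by $N_j$ Euclidean balls $B(\beta_{j,k};r_j)$ with polynomially-many centres, and apply \eqref{eq:tests} --- legitimate because of the hypothesis $n^{1/2}\ge p(\log p)^{1/2}\|\bX\|_\infty^2/(\kappa_1\surd 2)$ --- to obtain tests $\Phi_{j,k}$ with $\bbE_{\beta^\ast}\Phi_{j,k}\le e^{-(\gamma/2)n\mbox{D}(\beta^\ast,\beta_{j,k})}$ and $\int(1-\Phi_{j,k})\,{p}_{\l}(y\mid\beta,\xi,\bX)\,dy\le e^{-(\gamma/2)n\mbox{D}(\beta^\ast,\beta_{j,k})}$ for every $\beta\in B(\beta_{j,k};r_j)$. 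Set $\Phi_n=\sup_{j,k}\Phi_{j,k}$. For $\beta\in B(\beta_{j,k};r_j)$ one has $\bbE_{\beta^\ast}[(1-\Phi_n)e^{\ell_n(\beta,\beta^\ast)}]=\int(1-\Phi_n)\,{p}_{\l}(y\mid\beta,\xi,\bX)\,dy\le e^{-(\gamma/2)n\mbox{D}(\beta^\ast,\beta_{j,k})}$, and choosing the radii $r_j$ so that the oscillation of $\mbox{D}(\beta^\ast,\cdot)$ over each ball is at most half the shell value $\mbox{D}_j:=\inf_{S_j}\mbox{D}(\beta^\ast,\cdot)$, the exponent $-(\gamma/2)n\mbox{D}(\beta^\ast,\beta_{j,k})+(\gamma/4)n\mbox{D}(\beta^\ast,\beta)$ is at most $-(\gamma/8)n\,\mbox{D}_j$. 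Hence $\bbE_{\beta^\ast}[(1-\Phi_n)I_2]\le\sum_{j\ge 1}N_j\,e^{-(\gamma/8)n\,\mbox{D}_j}\,\pi_\beta(S_j)$, and the lower bound $n\,\mbox{D}(\beta^\ast,\beta)\ge n\kappa_2\|\beta-\beta^\ast\|^2/(1+\r_1\|\bX\|_\infty\surd p\,\|\beta-\beta^\ast\|)$ coming from $a(t+h)\ge a(t)+h\,a^{(1)}(t)+\r(|h|)a^{(2)}(t)/2$ (as in the proof of Theorem \ref{thm:statopt2}) makes $n\,\mbox{D}_j$ grow at least linearly in $j$; since the Gaussian tails of $\pi_\beta$ dominate the polynomial $N_j$, the series is controlled by its first term and $\bbE_{\beta^\ast}[(1-\Phi_n)I_2]\lesssim e^{-n\gamma\epsilon/8}$, and similarly $\bbE_{\beta^\ast}\Phi_n\lesssim e^{-n\gamma\epsilon/8}$. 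Two Markov inequalities then show that, with probability at least $1-2e^{-n\gamma\epsilon/8}$, simultaneously $\Phi_n\le 1/2$ and $(1-\Phi_n)I_2\le\tfrac12 e^{n\kappa_2\varepsilon^2/2}$, so (using $1-\Phi_n\ge 1/2$) $I_2\le 2(1-\Phi_n)I_2\le e^{n\kappa_2\varepsilon^2/2}$. Adding, $I_1+I_2\le 2e^{n\kappa_2\varepsilon^2/2}$ with probability at least $1-e^{-n\kappa_2\varepsilon^2/2}-2e^{-n\gamma\epsilon/8}$.

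\emph{Main obstacle.} The delicate point is the modulus-of-continuity estimate for $\mbox{D}(\beta^\ast,\cdot)$: the oscillation over each covering ball must be a small fraction of the divergence itself on the corresponding shell, so that the test error $e^{-(\gamma/2)n\mbox{D}(\beta^\ast,\beta_{j,k})}$ can absorb the factor $e^{(\gamma/4)n\mbox{D}(\beta^\ast,\beta)}$ carried by the integrand. Establishing it requires bounding the increments of the Bregman divergence of $a$ through $a^{(2)}$ and $a^{(3)}$ and fixes the radii $r_j$, and this is exactly where the scaling $\epsilon=\kappa_1\varepsilon/(2\r_1\|\bX\|_\infty\surd p)$ enters. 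Everything else --- the uniform moment bound for $I_1$, the covering-number count, and the summation of the shell series using the Gaussian tails of $\pi_\beta$ --- is routine bookkeeping.
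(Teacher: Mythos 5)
Your proposal is correct and follows essentially the same route as the paper's proof: the same ball-plus-shells decomposition, the same first-moment bound $\bbE_{\beta^\ast}e^{\ell_n}\le 1$ combined with a quadratic upper bound on $\mbox{D}(\beta^\ast,\cdot)$ over the $\varepsilon$-ball, and the same covering-based supremum of tests from \eqref{eq:tests} to absorb the $e^{(\gamma/4)n\mbox{D}}$ factor on the shells, finished by Markov inequalities. The "main obstacle" you flag (controlling the oscillation of $\mbox{D}(\beta^\ast,\cdot)$ over each covering ball so the center's test error dominates) is exactly the step the paper handles via the uniform-over-convex-hull guarantee in \eqref{eq:tests}, and your treatment of it is if anything more explicit than the paper's.
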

\begin{proof}
Writing  $\eta(\beta, \beta^*) =	\exp \big\{ \ell_n(\beta, \beta^\ast) + (\gamma /4)n\ \mbox{D}(\beta^\ast, \beta)\big\}$ and  
\begin{eqnarray}
U := \{ \beta: \|\beta - \beta^*\| > \varepsilon\} = \bigcup_{j=1}^\infty U_{j,n}	
\end{eqnarray}	
where $U_{j,n} = \{\beta: j \varepsilon < \| \beta - \beta^*\| < (j+1) \varepsilon\}$,  we express
\begin{eqnarray*}
\int \eta(\beta, \beta^*) \pi_\beta(\beta) d\beta &=& \int_{U^c}  \eta(\beta, \beta^*) \pi_\beta(\beta) d\beta + \int_{U} \tilde{\Phi}_n  \eta(\beta, \beta^*) \pi_\beta(\beta) d\beta\\ 
&{}&+ \sum_{j=1}^\infty \int_{U_{j,n}} ( 1- \tilde{\Phi}_n) \eta(\beta, \beta^*) \pi_\beta(\beta) d\beta \\
T &=& T_1 + S_1, \quad S_1 = T_2 + \sum_{j=1}^\infty T_{2j}
\end{eqnarray*}
for any sequence of test functions  $\{\tilde{\Phi}_n: n \geq 1\}$. 
Then 
\begin{eqnarray*}
\mb E_{\beta^\ast}  T_1 =  \int_{U^c} e^{(\gamma /4) n\ \mbox{D}(\beta^\ast, \beta)} \pi_\beta(\beta) d\beta \leq 
\int_{U^c} e^{n \gamma \kappa_1 \| \beta - \beta^\ast\|^2/4}\pi_\beta(\beta) d\beta \leq 
e^{n \gamma \kappa_2 \varepsilon^2/4}. 
\end{eqnarray*}
By Markov's inequality, $T_1 \le e^{n \kappa_2 \varepsilon^2/2}$ with probability $1- e^{-n \kappa_2 \varepsilon^2/2}$.  
To bound $T_2$ and $T_{2j}, j=1, \ldots, \infty$, we detail the construction of $\tilde{\Phi}_n$.  Let $N_{j,n} := N(j \varepsilon/2, U_{j,n}, \| \cdot \|)$ denote the $j \varepsilon/2$-covering number of $U_{j,n}$ with respect to  
$\| \cdot \|$.   For each $j \geq 1$, let $S_j$ be a maximal $j \epsilon_n/2$-separated points in $U_{j,n}$ and for each point $\tilde{\beta}_k \in S_j$ we can construct a test function  $\Phi_{n,\tilde{\beta}_k}$ as in \eqref{eq:tests}, with $r =  j \epsilon_n$.  Then we set  $\tilde{\Phi}_n$ to $
 \tilde{\Phi}_n = \sup_{j\geq 1} \max_{\tilde{\beta}_k \in S_j} \Phi_{n,\tilde{\beta}_k}$. 
Note also that 
\begin{eqnarray*}
 \sup_{{p}_{\l}(\cdot \mid \beta, \xi, \bX) \in \mathrm{conv}\{B(\beta_1; r/2 )\}}\mb E_{\beta^\ast} \Phi_n &+& \mb E_{{p}_{\l}(\cdot \mid \beta, \xi, \bX)} (1- \Phi_n) \\
 &\leq& \exp \bigg\{\gamma \lambda_p\|\beta_1 - \beta^\ast\|  -  \frac{n\kappa_1\gamma \|\beta_1 - \beta^*\|^2}{1 + \r_1 \|X\|_\infty \surd{p} \|\beta - \beta^*\|} \bigg\}\\
 &\leq & \exp \bigg\{\gamma\lambda_p\|\beta_1 - \beta^\ast\|  -  \frac{n\kappa_1\gamma\|\beta_1 - \beta^*\|}{ \r_1 \|X\|_\infty \surd{p}} \bigg\},  \\
 &\leq & \exp \bigg\{-\gamma\frac{n\kappa_1\|\beta_1 - \beta^*\|}{2 \r_1 \|X\|_\infty \surd{p}} \bigg\}, 
\end{eqnarray*}
where the second last inequality follows since ${n}^{1/2} \geq p (\log p)^{1/2}  \|X\|_\infty^2/\{\kappa_1 \surd{2}\}$. 
Then 
\begin{eqnarray*} 
\mb E_{\beta^\ast} \{ \tilde{\Phi}_n\} &\leq& \sum_{j=1}N_{j,n} \exp \bigg\{ - \gamma\frac{n\kappa_1 j \varepsilon}{2\r_1 \|X\|_\infty \surd{p}}\bigg\} \leq  \exp \bigg\{ - C \frac{n\kappa_1 \varepsilon}{2\r_1 \|X\|_\infty \surd{p}}\bigg\}  := e^{-Cn \epsilon}, 
\end{eqnarray*}	
for some constant $C > 0$.  
From Markov's inequality we obtain $ \tilde{\Phi}_n \leq e^{-Cn \epsilon}$ with probability at least $1 - e^{-Cn \epsilon}$. Hence $T_2 \leq S_1 e^{-Cn \epsilon}$ with probability at least $1 - e^{-Cn \epsilon}$.  Finally note that 
\begin{eqnarray*}
\mb E_{\beta^\ast} T_{2j} &=&  \int_{U_{j,n}} \mb E_{{p}_{\l}(\cdot \mid \beta, \xi, \bX)} ( 1- \tilde{\Phi}_n) e^{(\gamma /4)n \ \mbox{D}(\beta^\ast, \beta)} \pi_\beta(\beta) d\beta \\
&\leq& \int_{U_{j,n}} e^{-(\gamma/4)n\ \mbox{D}(\beta^\ast, \beta)} \pi_\beta(\beta) d\beta \leq 
\exp \bigg\{ - \gamma\frac{n\kappa_1j \varepsilon}{4\r_1 \|X\|_\infty \surd{p}}\bigg\}. 
\end{eqnarray*}
Hence $\sum_{j=1}^\infty E_{\beta^\ast}  T_{2j} \leq e^{- n \gamma \epsilon/4}$ and 
$E_{\beta^\ast} S_1 \leq e^{n \gamma \kappa_2 \varepsilon^2/4}E_{\beta^\ast} S_1+ e^{- n \gamma \epsilon/4}$, whence $E_{\beta^\ast} S_1 \leq 2e^{- n \gamma \epsilon/4}$.  Hence $S_1 \leq 2e^{- n \gamma \epsilon/8}$ with probability at least $1- 2e^{- n \gamma \epsilon/8}$. 
\end{proof}

\section{Review of Dynamical Systems \& Notion of Stability}\label{sec:dyn}
Dynamical systems theory is a classical technique that deals with stability and convergence of complex iterative methods. We call a dynamical system to be discrete-time if the system is observed on discrete time points $\set{t_0,t_1,t_2,\ldots}$. Usually, we consider the time-points to be evenly placed, i.e. $t_{j+1} = t_j + h$ for some $h>0$.
 Moreover, a system is considered autonomous if the function is independent of time and non-autonomous otherwise. 
 In this section, we will discuss the notion of stability for discrete time autonomous systems. Let us consider the following discrete-time autonomous system given by,
\begin{align}\label{sec_NOS_eq1}
    \psi^{t+1} = f(\psi^t), \quad t\in \mb{N}
\end{align}
where $f : \mb{R}^n \to \mb{R}^n$(or, $f : \mb{D} \to \mb{R}^n, \mb{D} \subseteq \mb{R}^n$) is a diffeomorphism, i.e. a smooth function with smooth inverse and $\psi^t \in \mb{R}^n$. $\psi^* \in \mb{R}^n$ is called a fixed point to this system if $\psi^* = f(\psi^*)$. We recall the following definition from \cite{bof2018lyapunov}.
\begin{definition}
A fixed point $\psi^*$ of a system given by \eqref{sec_NOS_eq1} is called

    (a) locally stable if given any $\epsilon > 0$, there exists $\delta = \delta(\epsilon)$ such that, whenever $\|\psi^0 - \psi^*\| < \delta$, we have $\|f(\psi^t) - \psi^*\| < \epsilon$ for all $t$.  

    (b) locally asymptotically stable if it is stable and $\delta$ can be chosen such that, whenever $\|\psi^0 - \psi^*\| < \delta$, we have $\psi^t \to \psi^*$ as $t \to \infty$.

    (c) locally unstable if it is not locally stable.

\end{definition}
The {\em locality}  in the definition is used to denote the fact that we are initializing the system in a $\delta$-ball around the fixed point. We say the stability is {\em global} if the system converges to the fixed point independent of the initialization, i.e. we can initialize at any point in the function domain.
\begin{lemma}\label{sec_NOS_lemma3}
Consider a system $x^{t+1} = g(x^{t})$ where $g : \mb{D} \to \mb{R}\,\ (\mb{D}\subseteq \mb{R})$ with a fixed point $x^*$ such that, $|g^{\prime}(x)|\leq \delta$ for all $x\in \mb{D}-\{x^*\}$, for some $\delta < 1$. Then, $x^*$ is globally asymptotically stable. 
\end{lemma}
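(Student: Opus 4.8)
The plan is to recognize this lemma as the one–dimensional Banach contraction principle in disguise, and so to first upgrade the pointwise bound $|g'|\le\delta$ on $\mb{D}\setminus\{x^*\}$ to the global Lipschitz estimate $|g(x)-g(y)|\le\delta|x-y|$ valid on all of $\mb{D}$, and then iterate it. Throughout I would regard $\mb{D}$ as an interval on which the dynamics is well defined, i.e.\ $g(\mb{D})\subseteq\mb{D}$; this is the relevant setting, since in Theorem \ref{thm_semi_ortho} one has $\mb{D}=\mb{R}^+$ and each coordinate map $h_{1,n}$ sends $\mb{R}^+$ into $\mb{R}^+$.

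For the contraction estimate I would argue by the mean value theorem, with a little care near $x^*$. Given $x<y$ in $\mb{D}$ with $x^*\notin(x,y)$, the mean value theorem produces $\zeta\in(x,y)\subseteq\mb{D}\setminus\{x^*\}$ with $|g(y)-g(x)|=|g'(\zeta)|\,|y-x|\le\delta|y-x|$. If instead $x<x^*<y$, I would apply the mean value theorem on $[x,x^*]$ and on $[x^*,y]$ separately — the interior of each sub-interval misses $x^*$, so $|g'|\le\delta$ applies on each — and combine:
\begin{align*}
|g(y)-g(x)|\le|g(y)-g(x^*)|+|g(x^*)-g(x)|\le\delta\big\{(y-x^*)+(x^*-x)\big\}=\delta|y-x|.
\end{align*}
The endpoint cases $x^*\in\{x,y\}$ fall under the first. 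Hence $g$ is a $\delta$-contraction on $\mb{D}$.

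Finally, since $x^*=g(x^*)$ and the orbit stays in $\mb{D}$ by forward-invariance, the contraction estimate gives $|x^{t+1}-x^*|\le\delta|x^t-x^*|$, whence $|x^t-x^*|\le\delta^t|x^0-x^*|$ for every $t\in\mb{N}$ and every $x^0\in\mb{D}$. As $\delta<1$, the right side tends to $0$ regardless of the initialization, which is convergence from an arbitrary starting point; and since $\delta^t\le1$, the same bound yields $\|x^t-x^*\|<\epsilon$ for all $t$ whenever $\|x^0-x^*\|<\epsilon$, so one may take $\delta(\epsilon)=\epsilon$ in the definition of stability. Together these give global asymptotic stability. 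I do not expect a genuinely hard step here; the only two points that need care are exactly the ones flagged above — the mean value theorem cannot be invoked in a single stroke across $x^*$, since the derivative bound excludes that point (hence the split at $x^*$, for which continuity of $g$ at $x^*$, automatic for the smooth $h_{1,n}$ in the application, is all that is used), and one must know the iterates remain in $\mb{D}$ for the recursion to make sense (hence recording the forward-invariance of $\mb{D}$ at the outset).
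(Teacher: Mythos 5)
Your proposal is correct and follows essentially the same route as the paper: a mean value theorem estimate between $x^t$ and $x^*$ giving $|x^{t+1}-x^*|\le\delta|x^t-x^*|$, then iteration. The extra care you take (splitting the interval at $x^*$, recording forward-invariance, and checking the $\epsilon$--$\delta$ stability clause) is sound and slightly more thorough than the paper's one-line argument, but it is not a different method.
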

\begin{proof}
Given a fixed point $x^* = g(x^*)$, use the Mean Value Theorem to get, $|x^{t+1}-x^*| = |g^{\prime}(x)||x^{t} - x^*|,\,\ \text{for some $x\in(x^t,x^*)$}$. Since, $|g^{\prime}(x)| \leq \delta $, we have, $|x^{t+1}-x^*| \leq \delta^{t+1} |x^{0}-x^*|$. This implies $|x^{t+1}-x^*| \to 0$ as $t \to \infty$.
\end{proof}
Let $\beta_k$ be the $k$-th coordinate of a vector $\beta \in \mb{R}^n$. Consider the system in \eqref{sec_NOS_eq1} with a fixed point $\psi^*$. Using generalized Taylor's theorem we get,
\begin{align*}
    \psi^{t+1}_k - \psi^*_k &= f_k(\psi^t) - f_k(\psi^*)
     = \nabla f_k (\psi^*) ( \psi^{t} - \psi^*) + h(\psi^t) |\psi^{t} - \psi^*|,
\end{align*}
where $\nabla f_k (\psi)$ is the gradient vector with $i^{th}$ entry given by 
$\partial f_k(\psi)/ \partial \psi_i$ and $h: \mb{R}^n \to \mb{R}$ such that, $\lim_{\psi \to \psi^*} h(\psi) = 0$. If $\psi_t$ is close to $\psi^*$, the convergence of the system depends on $\nabla f_k (\psi^*)$ by the following approximation, 
\begin{align}\label{sec_NOS_eqn2}
     (\psi^{t+1} - \psi^*) \approx \mathbf{J}( \psi^{t} - \psi^* ),
\end{align}
where  $\mathbf{J}$ is the $n \times n$ Jacobian matrix evaluated at $\psi^*$ with $i^{th}$ row given by $\nabla f^{\T}_i(\psi^*)$. Thus the behavior of the dynamical system \eqref{sec_NOS_eq1} around a small neighbourhood of $\psi^*$ is exactly same as that of the linearization in \eqref{sec_NOS_eqn2}. This is formalized in the  Hartman-Grobman theorem. 
\begin{definition}
A fixed point $\psi^*$, for a map $\psi \to f(\psi),\, \psi \in \mb{R}^n$ is called \emph{hyperbolic}  if none of the eigenvalues of $\mathbf{J}$
 has magnitude 1.
\end{definition}
\begin{theorem}[Hartman \& Grobman]\label{sec_NOS_thm1}
In a neighborhood of a hyperbolic fixed point, a diffeomorphism is topologically conjugate to the derivative at that fixed point. 
\end{theorem}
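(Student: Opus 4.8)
\emph{Remark on the statement and proof strategy.} The final statement is the classical Hartman--Grobman theorem (discrete-time version), invoked here only to certify that the linearization \eqref{sec_NOS_eqn2} faithfully reproduces the local dynamics of \eqref{sec_NOS_eq1} near a hyperbolic fixed point; we record a proof sketch for completeness, via the standard contraction-mapping argument. Write $f$ for the diffeomorphism, $\psi^*$ for the hyperbolic fixed point, and $A = \mathbf{J} = Df(\psi^*)$. \emph{Step 1 (normalization and globalization).} After a translation assume $\psi^* = 0$, so $f(x) = Ax + g(x)$ with $g(0)=0$, $Dg(0)=0$. Hyperbolicity means the spectrum of $A$ misses the unit circle, so $\mathbb{R}^n = E^s \oplus E^u$ splits into $A$-invariant subspaces on which $A$ acts as $A_s, A_u$; passing to an adapted norm we may take $\|A_s\| =: a < 1$ and $\|A_u^{-1}\| =: b < 1$ (in particular $A$ is invertible, consistent with $f$ being a diffeomorphism). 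Fix a smooth bump $\eta$ equal to $1$ on $B(0,r/2)$ and to $0$ off $B(0,r)$ and replace $g$ by $\tilde g = \eta g$: since $g$ vanishes to first order at $0$, $\mathrm{Lip}(\tilde g) \to 0$ as $r \to 0$ while $\tilde g$ stays globally bounded, and $\tilde f := A + \tilde g$ agrees with $f$ on $B(0,r/2)$. It thus suffices to build, for $r$ small, a homeomorphism $h$ of $\mathbb{R}^n$ with $h \circ \tilde f = A \circ h$ and then restrict to a neighbourhood of $0$ on which the cutoff is inactive.

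\emph{Step 2 (the conjugacy as a fixed point).} Look for $h = \mathrm{Id} + v$ with $v$ in the Banach space $C_b^0(\mathbb{R}^n,\mathbb{R}^n)$ of bounded continuous maps under the sup norm. For $r$ small, $\tilde f$ is a bi-Lipschitz homeomorphism of $\mathbb{R}^n$ (a small Lipschitz perturbation of the linear isomorphism $A$), and the equation $h \circ \tilde f = A \circ h$ is equivalent to $v(\tilde f(x)) = A v(x) - \tilde g(x)$. Splitting $v = v_s + v_u$ along $E^s \oplus E^u$ and composing with $\tilde f^{-1}$ on the stable block, this becomes the fixed point equation $v = \Phi(v)$ with
\begin{equation*}
\Phi_s(v) = A_s\,(v_s \circ \tilde f^{-1}) - \tilde g_s \circ \tilde f^{-1}, \qquad \Phi_u(v) = A_u^{-1}(v_u \circ \tilde f) + A_u^{-1}\tilde g_u .
\end{equation*}
Since $w \mapsto w \circ \tilde f$ and $w \mapsto w \circ \tilde f^{-1}$ are isometries of $C_b^0$, $\Phi$ is a contraction with constant $\max(a,b) < 1$, hence has a unique fixed point $v$; this yields a bounded continuous $h = \mathrm{Id} + v$ solving the conjugacy equation (and for $g=0$ uniqueness forces $v = 0$).

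\emph{Step 3 ($h$ is a homeomorphism, and conclusion).} Running the same contraction with the roles of $A$ and $\tilde f$ swapped produces $\bar h = \mathrm{Id} + w$, $w \in C_b^0$, with $\bar h \circ A = \tilde f \circ \bar h$. Then $H := h \circ \bar h$ satisfies $H \circ A = A \circ H$ and differs from $\mathrm{Id}$ by a bounded continuous map; but the only bounded continuous $u$ with $(\mathrm{Id}+u)\circ A = A \circ (\mathrm{Id}+u)$ is $u = 0$, since iterating $u(A^k x) = A^k u(x)$ and using $\|A_s^k\| \to 0$, $\|A_u^{-k}\| \to 0$ forces $\|u_s\|_\infty = \|u_u\|_\infty = 0$. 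Hence $h \circ \bar h = \mathrm{Id}$ and, symmetrically, $\bar h \circ h = \mathrm{Id}$, so $h$ is a homeomorphism of $\mathbb{R}^n$ with inverse $\bar h$. Restricting $h$ to a neighbourhood of $0$ on which $f = \tilde f$ (and which $f$ maps into such a neighbourhood) gives a local topological conjugacy between $f$ and $A = Df(\psi^*)$, which is the claim.

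\emph{Main obstacle.} The heart of the argument is Steps 1--2: realizing hyperbolicity as a genuine contraction/expansion pair through the adapted norm, and — more delicately — the cutoff globalization that makes $\mathrm{Lip}(\tilde g)$ arbitrarily small while keeping $\tilde g$ bounded and $\tilde f$ a global homeomorphism, since these are precisely what force the contraction constant $\max(a,b) < 1$. A secondary subtlety is that $h$ cannot be expected to be $C^1$ (resonances obstruct smooth linearization), so the homeomorphism property must be extracted from the uniqueness in the fixed point argument rather than from an inverse-function-theorem argument.
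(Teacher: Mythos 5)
The paper does not prove this statement at all: it is the classical Hartman--Grobman theorem, quoted verbatim and delegated to the literature (the surrounding text points to \cite{quandt1986hartman} for a review and to \cite{wiggins2003introduction,barbarossa2011stability} for proofs), and it is used in the paper only to justify passing from the nonlinear EM iteration to its linearization at the fixed point. So there is nothing in the paper to compare your argument against; what can be said is that your sketch is the standard and correct contraction-mapping proof. The three steps are the right architecture: the adapted norm realizing hyperbolicity as $\max(a,b)<1$, the bump-function globalization that keeps $\tilde g$ bounded while driving its Lipschitz constant to zero (so that $\tilde f = A + \tilde g$ remains a global bi-Lipschitz homeomorphism and the conjugacy equation becomes a genuine contraction on $C^0_b$), and the uniqueness-based argument that $h$ is invertible, correctly replacing any inverse-function-theorem reasoning, which is unavailable since $h$ is in general only $C^0$. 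The functional equations $\Phi_s(v) = A_s\,(v_s\circ\tilde f^{-1}) - \tilde g_s\circ\tilde f^{-1}$ and $\Phi_u(v) = A_u^{-1}(v_u\circ\tilde f) + A_u^{-1}\tilde g_u$ are the correct rearrangements of $v\circ\tilde f = Av - \tilde g$ on the stable and unstable blocks, and the rigidity step (a bounded continuous $u$ commuting with the hyperbolic $A$ must vanish, by iterating $u_s(x)=A_s^k u_s(A^{-k}x)$ and $u_u(x)=A_u^{-k}u_u(A^k x)$) is the standard way to conclude $h\circ\bar h = \bar h\circ h = \mathrm{Id}$. A fully rigorous write-up would need to spell out the quantitative threshold on $\mathrm{Lip}(\tilde g)$ relative to $\min(1-a,1-b)$ and the bookkeeping of neighborhoods when restricting back to the region where the cutoff is inactive, but these are routine and your sketch flags them appropriately.
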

The theorem above asserts that the behavior of a system around a hyperbolic fixed point is essentially same as the linearization near this point. Refer to \cite{quandt1986hartman} for a complete review. This motivates us to check stability of a fixed point using Lemma \ref{sec_NOS_lemma2}. Refer to  \cite{wiggins2003introduction,barbarossa2011stability} for a proof and for further reading on this topic. 
\begin{definition}
For a square matrix A, the spectral radius $\rho(A)$ is defined by
$$\rho(A) := \max\set{|\lambda| : \lambda \text{ is eigenvalue of } A}$$
\end{definition}
\begin{lemma}\label{sec_NOS_lemma2}
Let $\psi^*$ be a fixed point solution to the discrete-time autonomous system given by $\psi_{t+1} = f(\psi_t)$. Suppose, $f:\mb{D} \to \mb{R}^n (\mb{D} \subseteq \mb{R}^n)$ is a twice continuously differentiable function around a neighbourhood $\mb{D}$ of $\psi^*$. Let $\mathbf{J} = [\partial_i f(\psi)/\partial \psi_j]_{\psi=\psi^*}$ be the Jacobian matrix of $f$ evaluated at $\psi^*$. Then,

    (a) $\psi^*$ is locally asymptotically stable if  $\rho(\mathbf{J}) < 1$.
  
    (b) $\psi^*$ is locally unstable if at least one eigenvalue of $\mathbf{J}$ is greater than one in absolute value. 
\end{lemma}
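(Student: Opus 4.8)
The plan is to handle the two halves separately: part (a) via a Banach-type contraction carried out in a norm adapted to $\mathbf{J}$, and part (b) via an unstable-cone argument; the scalar version of (a) is already Lemma~\ref{sec_NOS_lemma3}, and the geometry underlying both parts is closely related to the Hartman--Grobman theorem (Theorem~\ref{sec_NOS_thm1}). For part (a), since $\rho(\mathbf{J}) < 1$ I would first fix $r$ with $\rho(\mathbf{J}) < r < 1$ and recall the standard fact (read off from the Jordan or Schur form of $\mathbf{J}$) that there is a norm $\norm{\cdot}_{\ast}$ on $\mb{R}^n$ whose induced matrix norm satisfies $\norm{\mathbf{J}}_{\ast} \le r$. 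Using $f \in C^2$, Taylor's theorem around $\psi^{\ast}$ gives $f(\psi) - \psi^{\ast} = \mathbf{J}(\psi - \psi^{\ast}) + R(\psi)$ with $\norm{R(\psi)}_{\ast} = o\p{\norm{\psi - \psi^{\ast}}_{\ast}}$ as $\psi \to \psi^{\ast}$, so one can pick $\delta > 0$ so small that $\norm{R(\psi)}_{\ast} \le \tfrac{1-r}{2}\norm{\psi - \psi^{\ast}}_{\ast}$ whenever $\norm{\psi - \psi^{\ast}}_{\ast} < \delta$. Then for an initialization in that ball one gets $\norm{\psi^{t+1} - \psi^{\ast}}_{\ast} \le \tfrac{1+r}{2}\norm{\psi^{t} - \psi^{\ast}}_{\ast}$, so by induction the orbit stays in the ball and $\norm{\psi^{t} - \psi^{\ast}}_{\ast} \to 0$ geometrically; equivalence of norms on $\mb{R}^n$ transfers this to the Euclidean norm and yields local asymptotic stability, which is precisely Lemma~\ref{sec_NOS_lemma21}.

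For part (b), suppose $\lambda$ is an eigenvalue of $\mathbf{J}$ with $|\lambda| > 1$. I would choose $a \in (1, |\lambda|)$ that is not the modulus of any eigenvalue of $\mathbf{J}$ and split $\mb{R}^n = E_1 \oplus E_2$ into the $\mathbf{J}$-invariant subspaces associated with eigenvalue moduli larger than, respectively smaller than, $a$; note $E_1 \neq \set{0}$ since it contains the generalized eigenspace of $\lambda$. Writing $\mu_1 > a$ for the smallest modulus occurring on $E_1$ and $\mu_2 < a$ for the largest on $E_2$, I would equip $E_1$ and $E_2$ with adapted norms and combine them into $\norm{(x,z)} = \norm{x} + \norm{z}$, so that $\norm{\mathbf{J}v} \ge (\mu_1 - \epsilon)\norm{v}$ for $v \in E_1$ and $\norm{\mathbf{J}w} \le (\mu_2 + \epsilon)\norm{w}$ for $w \in E_2$, with $\epsilon$ small enough that $\mu_1 - \epsilon > \max\set{1,\ \mu_2 + \epsilon}$. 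Splitting $f(\psi) - \psi^{\ast} = (F_1, F_2)$ accordingly, with the $C^2$ remainders bounded by $\eta\norm{\psi - \psi^{\ast}}$ on a small ball ($\eta \downarrow 0$ as the radius shrinks), a short computation should show that on the cone $K = \set{(x,z) : \norm{z} \le \norm{x}}$ and for $\eta$ small, $F := f(\cdot)-\psi^{\ast}$ maps $K \setminus \set{0}$ into $K$ and $\norm{F_1(x,z)} \ge \mu'\norm{x}$ for a fixed $\mu' > 1$. Hence any orbit starting on $K$ with nonzero $E_1$-component, however close to $\psi^{\ast}$, satisfies $\norm{x^{t}} \ge (\mu')^{t}\norm{x^{0}} \to \infty$ while it stays in the small ball, so it must leave the ball; thus $\psi^{\ast}$ is not locally stable.

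The Taylor estimates, the geometric iteration in part (a), and the norm-equivalence bookkeeping are routine. The main obstacle is the construction in part (b): choosing the spectral splitting and the adapted norms so that $E_1$ is uniformly more expanding than both $E_2$ and the factor $1$, and then verifying that the nonlinear remainder --- which is only $o$-small, not identically zero --- can be absorbed on a sufficiently small ball so that both the cone invariance and the uniform expansion $\norm{F_1} \ge \mu'\norm{x}$ survive. If one prefers, this step can be replaced by a citation to the instability half of Lyapunov's indirect method \cite{wiggins2003introduction,barbarossa2011stability}.
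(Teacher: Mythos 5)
Your proposal is correct, but it is worth noting that the paper does not actually prove this lemma: it treats it as a classical result, offers only the heuristic linearization $\psi^{t+1}-\psi^* \approx \mathbf{J}(\psi^t-\psi^*)$ together with an illustration for a linear system with a complete set of eigenvectors, and defers the proof to \cite{wiggins2003introduction,barbarossa2011stability}. What you have written is a self-contained version of the standard textbook argument those references contain. Part (a) is complete and airtight as sketched: the existence of an adapted norm with $\norm{\mathbf{J}}_{\ast}\le r<1$, the Taylor remainder bound $\norm{R(\psi)}_{\ast}\le \tfrac{1-r}{2}\norm{\psi-\psi^*}_{\ast}$ on a small ball, and the resulting geometric contraction with ratio $\tfrac{1+r}{2}<1$ give exactly local asymptotic stability, and this is strictly more than the paper needs (only part (a) is invoked, via Lemma \ref{sec_NOS_lemma21}, in the proofs of Theorems \ref{main_VB_1} and \ref{thm_semi_ortho}). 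Part (b) is the genuinely delicate half, and your cone construction is the right one; the key inequalities $\norm{F_1(x,z)}\ge(\mu_1-\epsilon-2\eta)\norm{x}$ and $\norm{F_2(x,z)}\le(\mu_2+\epsilon+2\eta)\norm{x}$ on the cone $K$ do close for $\epsilon,\eta$ small since $\mu_2<1<\mu_1$ can be arranged by the choice of $a$, and initializing on $E_1$ arbitrarily close to $\psi^*$ produces orbits that must exit a fixed ball, which is precisely the negation of local stability as defined in the paper. What your approach buys over the paper's is a proof at all; what the paper's citation buys is brevity, which is defensible given that only the stable half of the dichotomy is ever used downstream.
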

Lemma \ref{sec_NOS_lemma2} along with Theorem \ref{sec_NOS_thm1} provides sufficient conditions for the local convergence of a system.
Consider the linear system given by $\alpha^{(t+1)} = A \alpha^{(t)}$, with a fixed point $\alpha^* = 0$. Let us consider, $A\nu_i = \lambda_i \nu_i\, (i = 1,2,\ldots,n)$ and $|\lambda_1| \geq |\lambda_2| \geq \ldots \geq |\lambda_n|$. Suppose $A$ has a complete set of eigenvectors, i.e. the set of eigenvectors $\set{\nu_1,\nu_2,\ldots,\nu_n}$ form a basis of $\mb{R}^n$. Then it can be easily seen that a solution to the system is $\alpha^{(t)} = c_1 \lambda^t_1\nu_1 + c_2 \lambda^t_2\nu_2 + \ldots + c_n \lambda^t_n\nu_n$ for some arbitrary constants $c_1,c_2,\ldots,c_n$. Also, $c_1 \lambda^t_1\nu_1 + c_2 \lambda^t_2\nu_2 + \ldots + c_n \lambda^t_n\nu_n \to 0$ iff $|\lambda_1|<1$. This illustrates the Lemma above, in the most simplistic scenario, can be extended to the case where $A$ does not have a complete set of eigenvectors using \emph{Jordan Canonical form} of $A$ (refer to  \cite{wood2003always}). 

\section{Proofs of algorithmic convergence results in Section \ref{sec:algoconv}}
\begin{definition}\label{def_definite}
Consider two $n\times n$ real symmetric matrices $A$ \& $B$. Then we write,

    (a) $B \precsim A$ if for any $a \in \mb{R}^n$ such that, $a\neq 0$; we have, $a^{\T}(A-B)a\geq 0$, i.e. $A-B$ is a positive semi-definite matrix.

    (b) $B \prec A $ if for any $a \in \mb{R}^n$ such that, $a\neq 0$; we have, $a^{\T}(A-B)a > 0$,  i.e. $A-B$ is a positive definite matrix.

\end{definition}
In the following, we first provide the proof of Theorems \ref{main_VB_1} in \S \ref{appendix_thm2_proof}, calculation of spectral radius for $p=1$ in \S \ref{appendix_specialcase}  and then provide the proofs of some of the auxiliary results used in subsequent \S \ref{aux:2}. 

\subsection{Proof of {Theorem \ref{main_VB_1}} }\label{appendix_thm2_proof}
For some fixed $\alpha \in (0,1]$, the update equation in \eqref{sec_TTA_eqn3} can be rewritten as,
$$
      (\xi^{t+1})^2 = \mbox{diag}[\bX\{ \Sigma_{\alpha}(\xi^{t})/\alpha + \Sigma_{\alpha}(\xi^{t}) B_{\alpha}B^\T_{\alpha}\Sigma_{\alpha}(\xi^{t})\}\bX^{\T}].
$$
where $B_{\alpha}= [\bX^\T(Y-{1}/{2}\, \ind_n) + \Sigma^{-1}_{\beta}\mu_{\beta}/\alpha]$ and 
$\Sigma_{\alpha}(\xi)= [\Sigma^{-1}_{\beta}/\alpha - 2\bX^\T \text{diag}\{A(\xi) \}\bX]^{-1}$. We calculate the partial derivatives in order to get the Jacobian matrix,
$$
\frac{\partial \p{\xi^{t+1}_i}^2}{\partial \p{\xi^{t}_j}^2} = \frac{A^{\prime}(\xi^{t}_j)}{\xi^{t}_j}\bx^T_i\left[\Sigma_{\alpha}\left(\xi^t\right)
\bx_j \bx^\T_j\Sigma_{\alpha}\left(\xi^t\right)/\alpha + 2 \Sigma_{\alpha}\left(\xi^t\right)
\bx_j \bx^\T_j\Sigma_{\alpha}\left(\xi^t\right) B_{\alpha}B_{\alpha}^\T\Sigma_{\alpha}\left(\xi^t\right)\right]\bx_i.
$$
Then Jacobian Matrix($\mathbf{J}_{\alpha}$) at $\xi= \xi^t$ is given by,
$$
\mathbf{J}_{\alpha} = \bigg[\bX\Sigma_{\alpha}\left(\xi^t\right)\bX^\T \circ \bX\left\{ \Sigma_{\alpha}\left(\xi^t\right)/{\alpha} + 2\mu_{\alpha}\left(\xi^t\right)\mu_{\alpha}^\T\left(\xi^t\right)\right\}\bX^\T\bigg] \mbox{diag}\bigg(\frac{A^{\prime}\left(\xi^t\right)}{\xi^t}\bigg),
$$
where \textbf{$\circ$} denotes the {Hadamard Product}. Let us denote the maximum eigenvalue of a matrix $A$ by $\lambda_1(A)$. Our objective is to show that $\lambda_1(\mathbf{J}_{\alpha})\vert_{\xi = \xi^*} < 1$. We call $D= \mbox{diag}\left({A^{\prime}(\xi)}/{\xi}\right)$. By Lemma \ref{sec_ev_2} $D^{1/2}\left[\bX\Sigma_{\alpha}(\xi^*)\bX^\T \circ \bX\left\{ \Sigma_{\alpha}(\xi^*)/\alpha + 2\mu_{\alpha}(\xi^*)\mu_{\alpha}^\T(\xi^*)\right\}\bX^\T\right]D^{1/2}$ has the same set of eigenvalues as with  $\mathbf{J}_{\alpha}\vert_{\xi = \xi^*}$. $\bX\Sigma_{\alpha}(\xi^*)\bX^\T$ and $\bX\left\{ \Sigma_{\alpha}(\xi^*)/\alpha + 2\mu_{\alpha}(\xi^*)\mu_{\alpha}^\T(\xi^*)\right\}\bX^\T$ are positive semi-definite matrices which implies $D^{1/2}\left[\bX\Sigma_{\alpha}(\xi^*)\bX^\T \circ \bX\left\{ \Sigma_{\alpha}(\xi^*)/\alpha + 2\mu_{\alpha}(\xi^*)\mu_{\alpha}^\T(\xi^*)\right\}\bX^\T\right]D^{1/2}$ is positive semi-definite as well as symmetric. Since the eigenvalues of a real symmetric positive semi-definite matrix are real and non-negative, the eigenvalues of $\mathbf{J}_{\alpha}\vert_{\xi = \xi^*}$ are real and non-negative. We denote $\xi^*$ by $\xi$ in the following discussion for notational simplicity. 

 From the assumptions of the theorem and fixed point equation, it is clear that $\xi_i>0$ for all $i\in \set{1,2,\ldots,n}$. We begin with the fact that, $A(x)+x A^{\prime}(x) < 0 $ for all $x\in \mb{R}$. Then, recalling the Definition\,\ref{def_definite}, we have the following, 
\begin{align*}
    2[\bX^\T \text{diag}\{A(\xi) + \xi A^{\prime}(\xi)\} \bX] \prec \Sigma^{-1}_{\beta}/{\alpha},
\end{align*}
Since for all non-zero $a\in \mathbb{R}^p$, we have $a^\T(\Sigma^{-1}_{\beta}/{\alpha} - 2[\bX^\T \text{diag}\{A(\xi) + \xi A^{\prime}(\xi)\} \bX])a > 0$, assuming $\Sigma_{\beta}$ to be a positive definite matrix. Then, 
\begin{align}\label{pd_ineq1}
    2\bX^\T \text{diag}\{\xi A^{\prime}(\xi)\} \bX &\prec \Sigma^{-1}_{\beta}/{\alpha} - 2 \bX^\T \text{diag}\{A(\xi)\} \bX.
\end{align}
Now, $\bX \Sigma_{\alpha}(\xi) \bX^\T =  \bX[\Sigma^{-1}_{\beta}/{\alpha} - 2 \bX^\T \text{diag}\{A(\xi)\} \bX]^{-1}\bX^{\T}$ is a positive semi-definite matrix. This implies $\D^{1/2}\left[\bX\Sigma_{\alpha}(\xi)\bX^\T \circ \bX\Sigma_{\alpha}(\xi)\bX^\T\right]\D^{1/2}/{\alpha}$ is positive semi-definite by Schur product theorem. Then we have the following,
\begin{align}\label{sec_ev_eqn1}
    &{} \D^{1/2}\left[\bX\Sigma_{\alpha}(\xi)\bX^\T \circ \bX\left\{ \Sigma_{\alpha}(\xi)/{\alpha} + 2\mu_{\alpha}(\xi)\mu_{\alpha}^\T(\xi)\right\}\bX^\T\right]  \D^{1/2}\nonumber\\
    &\precsim 2\, \D^{1/2}\left[\bX\Sigma_{\alpha}(\xi)\bX^\T \circ \bX\left\{ \Sigma_{\alpha}(\xi)/{\alpha} + \mu_{\alpha}(\xi)\mu_{\alpha}^\T(\xi)\right\}\bX^\T\right]\D^{1/2}.
\end{align}

Recall $(\xi)^\T_{1 \times n} = [\xi_1, \xi_2,\ldots,\xi_n]$ and denote $[\Lambda_{\alpha}(\xi)]_{p \times p}= \Sigma_{\alpha}(\xi)/{\alpha} + \mu_{\alpha}(\xi)\mu_{\alpha}^\T(\xi)$ and $ [Q_{\alpha}]_{n \times n}= \bX\Lambda_{\alpha} \bX^\T$. Then $Q_{\alpha} =  \Delta(\xi) \circ \Gamma_{\alpha}$
where, $\Delta (\xi)= \xi\, \xi^\T$ and $\Gamma_{\alpha} = \text{diag}(1/\xi)\, Q_{\alpha} \, \text{diag}(1/\xi)$. Now, $\Gamma_{\alpha}$ is positive definite because $Q_{\alpha}$ is positive definite and $1/\xi>0$ for all $\xi\in \mb{R}^+$. Note that, $[\Gamma_{\alpha}]_{ii}=1$ for all $i\in \set{1,2,\ldots,n}$ because at the fixed point solution $\xi^2_i = [Q_{\alpha}]_{ii}$ for all $i\in \set{1,2,\ldots,n}$. Using the above expression and properties of {Hadamard product}, we rewrite \eqref{sec_ev_eqn1}  as 
\begin{align}\label{sec_ev_eqn2}
    &2\, \D^{1/2}[\bX \Sigma_{\alpha}(\xi) \bX^\T \circ \bX\left\{ \Sigma_{\alpha}(\xi)/{\alpha} + \mu_{\alpha}(\xi)\mu_{\alpha}^\T(\xi)\right\}\bX^\T]\D^{1/2} \nonumber\\
    &=2\, \D^{1/2}\left\{ \bX \Sigma_{\alpha}(\xi) \bX^\T \circ \Delta(\xi)\right\}\D^{1/2}  \circ \Gamma_{\alpha}.
\end{align}
Next we can write, 
\begin{align*}
    \D^{1/2} \left\{\bX \Sigma_{\alpha}(\xi) \bX^\T \circ \Delta(\xi)\right\} \D^{1/2}    = \mbox{diag}\Big[\{ \xi A^{\prime}(\xi)\}^{1/2}\Big] \bX \Sigma_{\alpha}(\xi) \bX^\T \mbox{diag}\Big[\{ \xi A^{\prime}(\xi)\}^{1/2}\Big]. 
\end{align*} 
The above equality follows from the fact that the $(i,j)^{th}$ entry of the matrices on the both side of the equation is given by, $\{\xi_iA^{\prime}(\xi_i)\}^{1/2} \,[\bX \Sigma_{\alpha}(\xi) \bX^\T]_{ij}\, \{\xi_jA^{\prime}(\xi_j)\}^{1/2}$. Let us call $ \mbox{R}_{\alpha} = 2\, \mbox{diag}[\{ \xi A^{\prime}(\xi)\}^{1/2}] \bX \Sigma_{\alpha}(\xi) \bX^\T \mbox{diag}[\{ \xi A^{\prime}(\xi)\}^{1/2}]$. Then $\mbox{R}_{\alpha}$ has the same set of non-zero eigenvalues with $ 2\, \Sigma_{\alpha}(\xi) \bX^\T \text{diag}\left\{{{\xi}A^{\prime}\left(\xi\right)}\right\}\bX$. Using Lemma \ref{sec_ev_4} with $B=2\, \bX^\T \text{diag}\left\{{{\xi}A^{\prime}\left(\xi\right)}\right\}\bX$ and $A = \Sigma^{-1}_{\beta}/{\alpha} - 2 \bX^\T \text{diag}\{A(\xi)\} \bX = \Sigma_{\alpha}^{-1}(\xi)$, along with \eqref{pd_ineq1} we have, $\lambda_1(2\, \Sigma_{\alpha}^{1/2}(\xi) \,\bX^\T \text{diag}\left\{{{\xi}A^{\prime}\left(\xi\right)}\right\}\bX\, \Sigma_{\alpha}^{1/2}(\xi) )<1$. Hence we can write, $\lambda_{1}( 2\, \Sigma_{\alpha}(\xi) \bX^\T \text{diag}\left\{{{\xi}A^{\prime}\left(\xi\right)}\right\}\bX)< 1$ which implies $\lambda_1(\mbox{R}_{\alpha})<1$. Also, we can rewrite \eqref{sec_ev_eqn2} 
\begin{align}\label{sec_ev_eqn3}
2\, \D^{1/2}\left\{ \bX \Sigma_{\alpha}(\xi) \bX^\T \circ \Delta(\xi)\right\}\D^{1/2}  \circ \Gamma_{\alpha} = \mbox{R}_{\alpha} \circ \Gamma_{\alpha}.
\end{align}
Finally we use Lemma \ref{sec_ev_3} on \eqref{sec_ev_eqn3} with $A=\Gamma_{\alpha}$ and $B=\mbox{R}_{\alpha}$ for $k=1$. This concludes the proof. 

\subsection{Calculation of spectral radius for $p=1$}\label{appendix_specialcase}
For a fixed $\alpha \in (0,1]$, One can write the updtaes from \eqref{sec_TTA_eqn3} for $p=1$ 
\begin{equation}\label{eq:ev_p=1}
\p{\xi^{t+1}_i}^2 = x^2_i[\sigma_{\alpha}(\xi^t) / \alpha + \{c\, \sigma_{\alpha}(\xi^t)\}^2] \quad (i = 1, 2,\ldots, n),
\end{equation}
where $\sigma_{\alpha}(\xi^t) =  \{ \sigma_{\beta}^{-2}/\alpha - 2\sum^n_{i=1} x^2_iA(\xi_i^t)\}^{-1} $ and $c = x^{\prime}(y-(1/2)\ind_n)+ {\mu_{\beta}}/\{\alpha\,\sigma_{\beta}^2\}$. We calculate ${\partial(\xi^{t+1}_i)^2}/{\partial ({\xi^{t}_j})^2}$ to get the Jacobian Matrix.
\begin{align*}
    \frac{\partial\p{\xi^{t+1}_i}^2}{\partial ({\xi^{t}_j})^2} &= {2 x^2_ix^2_j \{A^{\prime}(\xi^{t}_j)/2 \xi^{t}_j}\}\, \{\sigma^2_{\alpha}(\xi^t)/\alpha + {2c^2}{\sigma^3_{\alpha}(\xi^t)}\}.
\end{align*} 
Denote $\eta_{\alpha}(\xi^t)= \sigma^2_{\alpha}(\xi^t)/\alpha + {2c^2}{\sigma^3_{\alpha}(\xi^t)}$ and $a_{ij} = 2 x^2_ix^2_j {A^{\prime}(\xi^t_j)}/{2 \xi^t_j}$. Then the $(i,j)^{th}$ entry of the Jacobian Matrix $\mathbf{J}_{\alpha}^t$ at $\xi=\xi^t$ is given by $[\mathbf{J}_{\alpha}^t]_{ij} = \eta^t  a_{ij}$.
Since $[\mathbf{J}_{\alpha}^t]_{*j}= [\mathbf{J}_{\alpha}^t]_{*1} \times {x^2_j}/{x^2_1} \times {A^{\prime}(\xi^t_j)}/{A^{\prime}(\xi^t_1)} \times {\xi^t_1}/{\xi^t_j}$ it follows Rank($\mathbf{J}_{\alpha}$)=1. Here, $[\mathbf{J}_{\alpha}^t]_{*j}$ is the $j^{th}$ column of $\mathbf{J}_{\alpha}^t$. Order the eigenvalues $\lambda_1 \geq \lambda_2 \geq \ldots \geq \lambda_n$. Then assuming $\mathrm{tr}(\mathbf{J}_{\alpha}^t) \neq 0$ and using Lemma \ref{sec_ev_1} we obtain that the non-zero eigenvalue of $\mathbf{J}_{\alpha}^t$ is given by,
\begin{align}\label{eigen_p=1}
\lambda_{1} = \sum^n_i \eta_{\alpha}(\xi^t) a_{ii} = \sum^n_i x^4_i {A^{\prime}(\xi^t_i)}/{ \xi^t_i}\{\sigma^2_{\alpha}(\xi^t)/\alpha + {2c^2}{\sigma^3_{\alpha}(\xi^t)}\}.
\end{align}
{\em Further Simplification at $\xi^t = \xi^*$.}
In case of $p=1$, the self-consistency or the  fixed point equation \eqref{eq:fp} for \eqref{eq:ev_p=1} is given by
 \begin{equation}\label{eq_self_consis}
\p{\xi^{*}_i}^2 = x^2_i[ \sigma_{\alpha}(\xi^*) / \alpha + \{c\, \sigma_{\alpha}(\xi^*)\}^2].
\end{equation}
From \eqref{eigen_p=1}, we can calculate $\lambda_{1}$ at $\xi = \xi^*$ ,
\begin{align*}\label{upp_bd_1}\noindent
    \lambda_{1} = \sum^n_{i=1} x^4_i \{A^{\prime}(\xi^*_i)/ \xi^*_i\}\{\sigma^2_{\alpha}(\xi^*)/\alpha + {2c^2}{\sigma^3_{\alpha}(\xi^*)}\}.\nonumber
\end{align*}
Substituting \eqref{eq_self_consis} into the \eqref{upp_bd_1} gives us,
\begin{align}
    \lambda_1 &= \sum^n_{i=1} x^4_i \{A^{\prime}(\xi^*_i)/ \xi^*_i\} \sigma_{\alpha}(\xi^*) [2 (\xi^*_i)^2 /x^2_i - \sigma_{\alpha}(\xi^*)/\alpha], \nonumber\\
    &\leq \frac{ \sum^n_{i=1}2 x^2_i A^{\prime}(\xi^*_i) \xi^*_i}{\sigma_{\beta}^{-2}/\alpha- \sum^n_{i=1} 2x^2_i A(\xi^*_i)}.
\end{align}
The above inequality follows from the fact that $\sum^n_{i=1} \{x^4_i A^{\prime}(\xi^*_i)/ \xi^*_i\} \{\sigma^2_{\alpha}(\xi^t)/\alpha\}> 0$ since $A(\xi^*_i)/\xi^*_i >0$ for all $\xi_i^* \in \mb{R}^+$.  From \eqref{sec_TTA_prop1_eqn1} as $\sigma_{\beta} > 0$, we obtain
$$
2 \sum^n_i \left\{A^{\prime}(\xi^*){\xi^*} + A(\xi^*)\right\}x^2_i< 0 <\sigma^{-2}_{\beta}/\alpha.
$$
By rearranging the terms it follows that $\lambda_1< 1$.

\subsection{Auxiliary results for the proofs in Section \ref{sec:algoconv}} \label{aux:2} 

\begin{lemma}\label{sec_ev_2}
For a symmetric matrix $\mbox{M}_{n\times n}$ and a invertible diagonal matrix $\mbox{N}_{n\times n}$, the set of eigenvalues of $\mbox{MN}$ and $\mbox{N}^{1/2} \mbox{MN}^{1/2}$ are the same.  
\end{lemma}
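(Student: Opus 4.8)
The plan is to exhibit an explicit similarity transformation between $MN$ and $N^{1/2} M N^{1/2}$ and then invoke the standard fact that similar matrices have the same characteristic polynomial, hence the same eigenvalues (with algebraic multiplicities).

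First I would observe that since $N$ is diagonal and invertible, all of its diagonal entries are nonzero, so it admits a diagonal square root $N^{1/2}$ obtained by taking entrywise square roots; this $N^{1/2}$ is invertible with inverse $N^{-1/2}$, satisfies $N^{1/2} N^{1/2} = N$, and commutes with $N$, both being diagonal. The entries of $N^{1/2}$ are real when those of $N$ are positive --- the relevant case in Theorem \ref{main_VB_1}, where $N = \mathrm{D} = \mathrm{diag}\{A'(\xi^*)/\xi^*\}$ has positive diagonal --- and otherwise complex, which is harmless since eigenvalues are taken over $\mathbb{C}$. The key computation is then
\begin{align*}
N^{1/2}\,(MN)\,N^{-1/2} = N^{1/2} M \,(N N^{-1/2}) = N^{1/2} M N^{1/2},
\end{align*}
using $N N^{-1/2} = N^{1/2}$. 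Hence $MN$ and $N^{1/2} M N^{1/2}$ are conjugate by the invertible matrix $N^{1/2}$, so they share the same set of eigenvalues, counted with multiplicity.

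There is essentially no obstacle in this argument; the only point needing care is the existence of the square root $N^{1/2}$, which is guaranteed by invertibility of the diagonal matrix $N$. It is worth remarking that when $M$ is additionally symmetric and $N$ has positive diagonal, the transformed matrix $N^{1/2} M N^{1/2}$ is real symmetric, which is precisely the feature exploited in the proof of Theorem \ref{main_VB_1}: it reduces the spectral analysis of the (non-symmetric) Jacobian $\mathbf{J}_\alpha = MN$ to that of a real symmetric positive semi-definite matrix, whose eigenvalues are real and non-negative.
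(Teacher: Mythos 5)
Your proof is correct and is essentially the same argument as the paper's: the paper conjugates the characteristic polynomial, writing $|\mathrm{N}^{1/2}|\,|\mathrm{MN}-\lambda I|\,|\mathrm{N}^{-1/2}| = |\mathrm{N}^{1/2}\mathrm{M}\mathrm{N}^{1/2}-\lambda I|$, which is just the determinant form of the similarity $\mathrm{N}^{1/2}(\mathrm{MN})\mathrm{N}^{-1/2} = \mathrm{N}^{1/2}\mathrm{M}\mathrm{N}^{1/2}$ that you state explicitly. Your added remarks on the existence of $\mathrm{N}^{1/2}$ and on the role of positivity in Theorem \ref{main_VB_1} are accurate but not a different method.
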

\begin{proof}
The characteristic equation for $\mbox{MN}$ is given by, $\vert\mbox{MN} - \lambda \mb I\vert = 0$. Since, $\mbox{N}$ is invertible we can write, $|\mbox{N}^{1/2}|\vert\mbox{MN} - \lambda \mb I\vert|\mbox{N}^{-1/2}| = 0 $ which implies
$\vert\mbox{N}^{1/2}\mbox{M}\mbox{N}^{1/2} - \lambda \mb I\vert = 0$. Hence the proof. 
\end{proof}
\begin{lemma}\label{sec_ev_3}
Let $A,B$ be $n\times n$ given positive semidefinite Hermitian matrices. Arrange the eigenvalues of $A \circ B$ and $B$ and the main diagonal entries $d_i(A)$ of A in decreasing order $\lambda_1 \geq \lambda_2 \geq \ldots \geq \lambda_n$ and $d_1(A) \geq d_2(A) \geq \ldots \geq d_n(A)$. Then, 
\begin{align*}
 \sum_{i=1}^k \lambda_{i}(A\circ B) \leq  \sum_{i=1}^k d_i(A)\lambda_{i}(B), \quad k= \set{1,2,\ldots n},
\end{align*}
\end{lemma}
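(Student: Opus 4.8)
The plan is to reconstruct the short proof of this classical Hadamard‑product eigenvalue bound of \cite{horn1994topics} by an ``eigenvalue peeling'' (Abel summation) argument applied to $B$, together with the Schur product theorem. Throughout, write $S_k(M):=\sum_{i=1}^k\lambda_i(M)$ for a Hermitian matrix $M$ with eigenvalues in decreasing order. I would use two standard facts as black boxes: Ky Fan's maximum principle $S_k(M)=\max\{\tr(U^{\ast}MU):U^{\ast}U=I_k\}$, which makes $S_k$ sublinear on Hermitian matrices (so $S_k(\sum_j c_jM_j)\le\sum_j c_jS_k(M_j)$ for scalars $c_j\ge 0$), and the Schur product theorem, that $M\circ N\succsim 0$ whenever $M,N\succsim 0$.

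First I would peel $B$. Take a spectral decomposition $B=\sum_{j=1}^n\lambda_j(B)\,v_jv_j^{\ast}$ with orthonormal $v_j$, put $\mu_j:=\lambda_j(B)-\lambda_{j+1}(B)\ge 0$ (convention $\lambda_{n+1}(B):=0$, legitimate since $B\succsim 0$), and let $P_j:=\sum_{l=1}^j v_lv_l^{\ast}$ be the rank-$j$ orthogonal projection onto the span of the top $j$ eigenvectors. Abel summation gives $B=\sum_{j=1}^n\mu_jP_j$, hence by bilinearity $A\circ B=\sum_{j=1}^n\mu_j\,(A\circ P_j)$, and sublinearity of $S_k$ yields $S_k(A\circ B)\le\sum_{j=1}^n\mu_j\,S_k(A\circ P_j)$; note also the telescoping identity $\sum_{j\ge i}\mu_j=\lambda_i(B)$. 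Next, for an arbitrary rank-$r$ orthogonal projection $P$, I would prove the two-sided estimate $S_k(A\circ P)\le\sum_{i=1}^{\min(k,r)}d_i(A)$. Since $I-P\succsim 0$ and $A\succsim 0$, Schur's theorem gives $A\circ(I-P)\succsim 0$, i.e.\ $A\circ P\precsim A\circ I$, and $A\circ I$ is the diagonal matrix carrying the diagonal of $A$, whose eigenvalues are $d_1(A)\ge\cdots\ge d_n(A)$; monotonicity of eigenvalues under the semidefinite order then gives $\lambda_i(A\circ P)\le d_i(A)$ for every $i$, whence $S_k(A\circ P)\le\sum_{i=1}^k d_i(A)$. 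On the other hand $A\circ P\succsim 0$, so $S_k(A\circ P)\le\tr(A\circ P)=\sum_i a_{ii}(P)_{ii}\le\sum_{i=1}^r d_i(A)$, the last inequality because $(P)_{ii}\in[0,1]$ with $\sum_i(P)_{ii}=r$ while the diagonal entries of $A$ are a permutation of $d_1(A)\ge\cdots\ge d_n(A)\ge 0$ (an elementary rearrangement bound). Taking the smaller of the two prefix sums gives the claimed estimate.

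Finally I would assemble the pieces, applying the last estimate with $r=j$:
\[
S_k(A\circ B)\le\sum_{j=1}^n\mu_j\sum_{i=1}^{\min(k,j)}d_i(A)=\sum_{i=1}^k d_i(A)\sum_{j\ge i}\mu_j=\sum_{i=1}^k d_i(A)\,\lambda_i(B),
\]
which is exactly the asserted inequality, for every $k$. (For $k=1$, which is all that Theorem~\ref{main_VB_1} uses, the second paragraph already suffices: $\lambda_1(A\circ B)\le\lambda_1(B)\,\lambda_1(A\circ I)=d_1(A)\,\lambda_1(B)$ via the Schur product theorem applied to $A$ and $\lambda_1(B)I-B$.) The one delicate point is the two-sided bound on $S_k(A\circ P_j)$: one must notice that for a low-rank projection ($j<k$) the trace estimate $\tr(A\circ P_j)\le\sum_{i=1}^j d_i(A)$, and not the cruder $\sum_{i=1}^k d_i(A)$, is what makes the double sum telescope to precisely $\sum_{i=1}^k d_i(A)\lambda_i(B)$; the remaining manipulations are routine.
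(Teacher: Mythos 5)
Your proof is correct. The paper does not actually prove this lemma --- it simply cites Theorem 5.5.12 of \cite{horn1994topics} --- so there is no in-paper argument to compare against; your Abel-summation decomposition of $B$ into a nonnegative combination of spectral projections, combined with Ky Fan sublinearity and the two-sided bound $S_k(A\circ P)\le\sum_{i=1}^{\min(k,r)}d_i(A)$ for a rank-$r$ projection $P$, is a faithful and complete reconstruction of the standard proof of that classical result. You also correctly observe that only the $k=1$ case is used in Theorem \ref{main_VB_1}, for which the one-line argument via the Schur product theorem applied to $A$ and $\lambda_1(B)I-B$ already suffices.
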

\begin{proof}
See {Theorem 5.5.12} in {\cite{horn1994topics}}.
\end{proof}
\begin{lemma}\label{sec_ev_4}
For two $n\times n$ symmetric matrices $\mbox{A}$ and $\mbox{B}$, such that $\mbox{B} \prec \mbox{A}$ where $\mbox{A}$ is positive definite and $\mbox{B}$ is positive semi-definite. Then the largest eigenvalue of $\mbox{A}^{-1/2}\,\mbox{B}\,\mbox{A}^{-1/2}$ given by $\lambda_{1}(\mbox{A}^{-1/2}\,\mbox{B}\,\mbox{A}^{-1/2})$ is less than 1.
\end{lemma}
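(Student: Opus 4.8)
The plan is to reduce the claim to the elementary fact that congruence by an invertible matrix preserves positive definiteness (Sylvester's law of inertia). First I would record that since $\mathrm{A}$ is symmetric positive definite, it admits a symmetric positive definite square root $\mathrm{A}^{1/2}$, which is invertible, and I would set $\mathrm{A}^{-1/2} := (\mathrm{A}^{1/2})^{-1}$, itself symmetric positive definite. The matrix $\mathrm{A}^{-1/2}\,\mathrm{B}\,\mathrm{A}^{-1/2}$ is then symmetric, so all of its eigenvalues are real, and it is positive semi-definite because for any $a \in \mb{R}^n$ we have $a^{\T} \mathrm{A}^{-1/2}\mathrm{B}\mathrm{A}^{-1/2} a = (\mathrm{A}^{-1/2}a)^{\T}\mathrm{B}(\mathrm{A}^{-1/2}a) \geq 0$ by positive semi-definiteness of $\mathrm{B}$; hence its eigenvalues are nonnegative.

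Next I would use the hypothesis $\mathrm{B} \prec \mathrm{A}$, i.e. $\mathrm{A} - \mathrm{B}$ is positive definite (Definition \ref{def_definite}). Conjugating by the invertible symmetric matrix $\mathrm{A}^{-1/2}$ preserves positive definiteness: for any nonzero $a \in \mb{R}^n$, writing $b = \mathrm{A}^{-1/2} a \neq 0$,
\begin{eqnarray*}
a^{\T}\mathrm{A}^{-1/2}(\mathrm{A} - \mathrm{B})\mathrm{A}^{-1/2} a = b^{\T}(\mathrm{A} - \mathrm{B}) b > 0,
\end{eqnarray*}
so $\mathrm{A}^{-1/2}(\mathrm{A} - \mathrm{B})\mathrm{A}^{-1/2} = \mb{I} - \mathrm{A}^{-1/2}\mathrm{B}\mathrm{A}^{-1/2}$ is positive definite. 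Therefore every eigenvalue $\lambda$ of $\mathrm{A}^{-1/2}\mathrm{B}\mathrm{A}^{-1/2}$ satisfies $1 - \lambda > 0$, i.e. $\lambda < 1$; combined with the previous paragraph, $\lambda \in [0, 1)$. In particular $\lambda_1(\mathrm{A}^{-1/2}\mathrm{B}\mathrm{A}^{-1/2}) < 1$, which is the assertion.

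There is no genuine obstacle here; the only point requiring any care is the justification that conjugation by $\mathrm{A}^{-1/2}$ maps $\mathrm{A} - \mathrm{B} \succ 0$ to $\mb{I} - \mathrm{A}^{-1/2}\mathrm{B}\mathrm{A}^{-1/2} \succ 0$, which I would either spell out via the explicit quadratic-form computation above or cite as Sylvester's law of inertia. If desired, an alternative phrasing avoids square roots entirely: the eigenvalues of $\mathrm{A}^{-1/2}\mathrm{B}\mathrm{A}^{-1/2}$ coincide (by Lemma \ref{sec_ev_2}, or by similarity) with those of $\mathrm{A}^{-1}\mathrm{B}$, i.e. with the generalized eigenvalues $\lambda$ solving $\det(\mathrm{B} - \lambda \mathrm{A}) = 0$; for such $\lambda$ with eigenvector $v \neq 0$ one has $v^{\T}\mathrm{B}v = \lambda\, v^{\T}\mathrm{A}v$ with $v^{\T}\mathrm{A}v > 0$, and $\mathrm{B} \prec \mathrm{A}$ forces $v^{\T}\mathrm{B}v < v^{\T}\mathrm{A}v$, hence $\lambda < 1$.
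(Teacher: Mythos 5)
Your proof is correct and follows essentially the same route as the paper: conjugating $\mathrm{A}-\mathrm{B} \succ 0$ by $\mathrm{A}^{-1/2}$ to conclude that $\mathrm{I}_n - \mathrm{A}^{-1/2}\mathrm{B}\mathrm{A}^{-1/2}$ is positive definite, hence $\lambda_1(\mathrm{A}^{-1/2}\mathrm{B}\mathrm{A}^{-1/2}) < 1$. You merely spell out the quadratic-form computation (and offer a generalized-eigenvalue alternative) where the paper says ``it is easy to see.''
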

\begin{proof}
Since $\mbox{A}-\mbox{B}$ is positive definite and $\mbox{A}$ is invertible, it is easy to see that $\mbox{A}^{-1/2}(\mbox{A}-\mbox{B})\mbox{A}^{-1/2}$ is also positive definite. Then the smallest eigen value of $I_n - \mbox{A}^{-1/2}\,\mbox{B}\,\mbox{A}^{-1/2}$ is bigger than 0. This implies $\lambda_{1}(\mbox{A}^{-1/2}\,\mbox{B}\,\mbox{A}^{-1/2}) < 1$.
\end{proof}

\begin{lemma}\label{sec_ev_1}
For an $n\times n$ matrix with rank 1, the number of non-zero eigenvalues is at most 1. If trace of the matrix (denoted $\mbox{tr}(A)$) is non-zero then a non-zero eigenvalue exists and equal to trace of the matrix.
\end{lemma}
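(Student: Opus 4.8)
The plan is to exploit the factorization of a rank-one matrix together with the relation between the trace and the eigenvalues. First I would write the matrix $A$, having $\mathrm{rank}(A)=1$, in the form $A = u v^{\T}$ for suitable nonzero column vectors $u, v \in \mb{R}^n$; this is always possible for a rank-one matrix. Then every $w \in \mb{R}^n$ with $v^{\T} w = 0$ satisfies $A w = u (v^{\T} w) = 0$, so the kernel of $A$ has dimension at least $n-1$, and by rank--nullity it is exactly $n-1$. Hence $0$ is an eigenvalue of geometric multiplicity $n-1$, so of algebraic multiplicity at least $n-1$, and therefore $A$ has at most one nonzero eigenvalue counted with algebraic multiplicity. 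This establishes the first assertion.

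For the second assertion, I would note that $A u = u (v^{\T} u) = (v^{\T} u)\, u$, so $u$ is an eigenvector of $A$ with eigenvalue $v^{\T} u$ whenever $v^{\T} u \neq 0$, while the cyclic property of the trace gives $\mbox{tr}(A) = \mbox{tr}(u v^{\T}) = v^{\T} u$. Thus if $\mbox{tr}(A) \neq 0$ then $v^{\T} u \neq 0$, so a nonzero eigenvalue exists, and by the first part it is unique and equal to $v^{\T} u = \mbox{tr}(A)$. Equivalently, one may invoke the fact that the eigenvalues summed with algebraic multiplicities equal $\mbox{tr}(A)$; since $n-1$ of them vanish, the remaining one equals $\mbox{tr}(A)$ and is nonzero precisely when $\mbox{tr}(A) \neq 0$.

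I do not anticipate any real obstacle here; the only subtlety worth flagging is to phrase everything in terms of algebraic (rather than geometric) multiplicities, since a rank-one nilpotent matrix — for instance one with a single nonzero off-diagonal entry — has all eigenvalues zero and trace zero, which is consistent with the statement. Using the representation $A = u v^{\T}$ together with the cyclicity of the trace lets both halves of the lemma be dispatched simultaneously.
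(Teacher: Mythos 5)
Your proof is correct. It takes a mildly different route from the paper's in the first half: the paper argues by contradiction, observing that two nonzero eigenvalues with linearly independent eigenvectors would force the image of $A$ to contain two linearly independent vectors, contradicting $\mbox{rank}(A)=1$; you instead use the explicit factorization $A = uv^{\T}$ and rank--nullity to conclude that $0$ has geometric (hence algebraic) multiplicity at least $n-1$. Your version is in fact slightly more careful, since the paper's eigenvector argument only directly controls geometric multiplicity, whereas the statement ``at most one nonzero eigenvalue'' is really about algebraic multiplicity --- the point you correctly flag with the nilpotent example. For the second half both arguments coincide (trace equals the sum of eigenvalues with multiplicity, and $n-1$ of them vanish), though your direct verification that $u$ is an eigenvector with eigenvalue $v^{\T}u = \mbox{tr}(A)$ is a nice concrete supplement. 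Either proof suffices for the use made of the lemma in \S\ref{appendix_specialcase}.
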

\begin{proof}
Suppose an $n \times n$ matrix A has two non-zero eigenvalue $\lambda_1,\lambda_2$ with non-zero linearly independent eigenvectors $v_1,v_2$. Then $Av_1 = \lambda_1 v_1$ and $Av_2 = \lambda_2 v_2$. This contradicts the fact  $\mbox{rank}(A)=1$. Now assuming that $\mathrm{tr}(A) \neq 0$, and using the fact $\mathrm{tr}(A) = \sum^n_{i=1} \lambda_i$, we claim that a non-zero eigenvalue exists and $\lambda_1 = \mathrm{tr}(A)$.
\end{proof}

\subsection{Global convergence rate in a semi-orthogonal case: Proof of Theorem \ref{thm_semi_ortho}}\label{ortho_proof_convergence}
Recall that throughout this proof we are going to assume $\sigma_{\beta}=1$. We consider two separate cases, given by $n=1$ and $n\geq2$.\\
\noindent {\em Case $n=1$:}
 For notational convenience, let us call $h_{1 ,n}^{\prime}(z) = h_{n}^{\prime}(z)$. For $n=1$ we have,
\begin{align*}
    h_1^{\prime}(z) = \frac{A^{\prime}(\sqrt{z})}{\sqrt{z}} (1 - 2 A(\sqrt{z}))^{-2}\trd{1 + \frac{1}{2}(1 - 2A(\sqrt{z}))^{-1}}.
\end{align*}
From Proposition \ref{sec_TTA_prop1} we have $1 \leq \trd{1-2A(\sqrt{z})} \leq 5/4$ which  $h_1^{\prime}(z) \leq 3{A^{\prime}(\sqrt{z})}/{2\sqrt{z}}$. In the following, we derive  an upper bound for $A^{\prime}(x) /x$ for $x\in \mb{R}^+$. 
\begin{align}\label{ortho_eq1}
    \frac{A^{\prime}(x)}{x} &= \frac{(e^x - x)^2 - (1 + x^2)}{4x^3(1+e^x)^2},\nonumber \\
    &= \frac{2\sum^{\infty}_{n=3} x^{n-3}/n! + \sum^{\infty}_{n=4} x^{n-3}[1/\{2!(n-2)!\} + 1/\{3!(n-3)!\} + \cdots + 1/\{(n-2)!2!\}]}{4(1+e^x)^2}, \nonumber\\
    &\leq \frac{2e^x}{4(1+e^x)^2} + \frac{8 e^{2x}}{4(1+e^x)^2}.
\end{align}
The inequality in \eqref{ortho_eq1} is due to $\sum^{\infty}_{n=3} x^{n-3}/n! < e^x$ and $x^{n-3}[1/\{2!(n-2)!\} + 1/\{3!(n-3)!\} + \cdots + 1/\{(n-2)!2!\}]$ and $\sum^{\infty}_{n=1} x^{n-3}\, 2^{n}/n! \leq 2^3 \exp(2x)$. Using  $\exp(x) + \exp(-x) \geq 2$ for all $x \in \mb{R}$, we further obtain, 
\begin{align*}
     \frac{2e^x}{4(1+e^x)^2} + \frac{8 e^{2x}}{4(1+e^x)^2} &\leq \frac{1}{2(e^{-x} + e^x + 2)} + \frac{8}{4(e^{-x} + e^x)^2}
    \leq \frac{1}{8} + \frac{8}{16} = 5/8.
\end{align*}
Hence $\|h_1^{\prime}\|_\infty \leq 15/16$. \\
\noindent {\em Case $n\geq2$:}
We begin with the function $h^{\prime}_n(z)$ which is given by,
\begin{align}
h_{n}^{\prime}(z) =  \frac{A^{\prime}(\sqrt{z})}{\sqrt{z}} \sigma_n^{-2}\trd{\frac{1}{n} +  \frac{1}{2\,\sigma_n}},
\end{align}
where, $\sigma_n = \{1/n- 2\,A(\sqrt{z})\}$. In Lemma \ref{semi_ortho_lemma1} we show that for any $z\in \mb{R}^+$, $h_n^{\prime}(z)$ is a monotonically increasing function of $n$, provided $n\geq2$. And also $h_n^{\prime}$ converges pointwise to $h^{\prime}(z):= - A^{\prime}(\sqrt{z})/ \{16 \,\sqrt{z} A^3(\sqrt{z})\}$ and $h'(z) < 1$ for $z \in \mathbb{R}^+$. So for any fixed $z\in \mb{R}^+$ and $n \in \set{2,3,4,\ldots}$ we have $h_n^{\prime}(z) \leq h^{\prime}(z)<1$.  Hence $\| h_n^{\prime}\|_\infty < 1 $ for any fixed $n$. 

\subsection{Auxiliary results for the global convergence rate result}
The function $A(\cdot)$ plays a crucial role in studying the convergence of the EM. The following proposition provides some properties of $A(\xi)$. 
\begin{proposition}\label{sec_TTA_prop1}
The following are true for the function $A(\xi):=-\tanh{(\xi/2)}/4\xi$, defined on $\mb{R}^+$.
 $A: \mb{R}^+ \to \mb{R}^-$ is monotonically increasing and twice continuously differentiable with $A(0)=-1/8$.
 \end{proposition}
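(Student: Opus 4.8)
The plan is to reduce every assertion about $A(\xi) = -\tanh(\xi/2)/(4\xi)$ to elementary properties of $\tanh$, the key device being the identity $\xi A(\xi) = -\tfrac14\tanh(\xi/2)$, which clears the denominator and makes several of the claims transparent. \textbf{Value at $0$ and smoothness.} Since $\tanh$ is real-analytic on $\mb{R}$ and odd, with $\tanh(\xi/2) = \xi/2 - \xi^3/24 + \cdots$, the quotient $\tanh(\xi/2)/\xi = 1/2 - \xi^2/24 + \cdots$ is a real-analytic \emph{even} function on all of $\mb{R}$ (the apparent singularity at $0$ is removable). Hence $A$ extends to a $C^\infty$ function on $\mb{R}$, in particular it is twice continuously differentiable on $\mb{R}^+$ and at $0$, with $A(0) = -1/8$; reading off the next Taylor coefficient gives $A'(0) = 0$ and $A''(0) = 1/48 > 0$.

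\textbf{Sign and monotonicity.} For $\xi > 0$ both $\tanh(\xi/2) > 0$ and $4\xi > 0$, so $A(\xi) < 0$, i.e. $A:\mb{R}^+ \to \mb{R}^-$; combined with $\lim_{\xi\to\infty} A(\xi) = 0$ and monotonicity this pins down the range $A(\mb{R}^+) \subseteq [-1/8, 0)$, equivalently $1 < 1 - 2A(\xi) \le 5/4$, which is invoked later in the paper. For monotonicity write $g(\xi) = \tanh(\xi/2)/\xi$, so $A = -g/4$ and it suffices that $g$ is strictly decreasing on $\mb{R}^+$. Differentiating, $\xi^2 g'(\xi) = \tfrac{\xi}{2}\operatorname{sech}^2(\xi/2) - \tanh(\xi/2)$; substituting $t = \xi/2$ and multiplying by $\cosh^2 t > 0$, the right-hand side has the sign of $t - \sinh t\cosh t = t - \tfrac12 \sinh(2t)$, which is strictly negative for $t > 0$ by the elementary inequality $\sinh s > s$ for $s > 0$. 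Thus $A'(\xi) = -g'(\xi)/4 > 0$ on $\mb{R}^+$, so $A$ is strictly increasing; moreover $A'(\xi)/\xi > 0$ on $\mb{R}^+$, which extends to $\mb{R}\setminus\{0\}$ since $A$ is even (so $A'$ is odd) and to $\xi = 0$ by the value $A''(0) = 1/48 > 0$ computed above.

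\textbf{The companion inequality.} The remaining claim $A(\xi) + \xi A'(\xi) < 0$ on $\mb{R}^+$ (used repeatedly in the stability proof) is immediate from the same identity: $A(\xi) + \xi A'(\xi) = \frac{d}{d\xi}[\xi A(\xi)] = \frac{d}{d\xi}\big[-\tfrac14\tanh(\xi/2)\big] = -\tfrac18 \operatorname{sech}^2(\xi/2) < 0$ for every real $\xi$.

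The proof is essentially bookkeeping; the only place requiring a genuine (though standard) input is the monotonicity of $g$, and there the work collapses to $\sinh s > s$. I do not anticipate any real obstacle — the one structural observation that does the heavy lifting is that clearing the denominator via $\xi A(\xi) = -\tfrac14\tanh(\xi/2)$ simultaneously trivializes the smoothness-at-$0$ question and the sign of $A(\xi) + \xi A'(\xi)$.
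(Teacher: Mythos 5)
Your proof is correct and follows essentially the same route as the paper's: both arguments clear the denominator by differentiating $\xi A(\xi) = -\tfrac14\tanh(\xi/2)$ to obtain $A(\xi)+\xi A'(\xi)<0$, both remove the singularity at $0$ to get $A(0)=-1/8$, and both reduce the monotonicity of $A$ to an elementary inequality — your $\sinh s > s$ is exactly the positivity of $(e^\xi-\xi)^2-(1+\xi^2)$ that the paper asserts without further detail. The only presentational difference is that you establish twice-continuous differentiability via real-analyticity of the even function $\tanh(\xi/2)/\xi$ where the paper instead exhibits a formula for $A''$; both are fine.
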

 \begin{proof}
It is easy to see that the {range} of $A(\cdot) \subseteq \mb{R}^-$ since $\tanh(\xi/2) > 0$ for all $\xi\in \mb{R}^+$. $A(0)=-1/8$ follows from the fact that, $\lim_{\xi \to 0} \{\exp(\xi) - 1\}/\xi = 1$ and  
 $A(\xi) = -{\{\exp(\xi)-1\}}/{4\xi\{\exp(\xi)+1\}}$. Differentiating $\xi\,A(\xi)$ gives the following for all $\xi\in \mb{R}^+$,
 \begin{align}\label{sec_TTA_prop1_eqn1}
     A(\xi)+\xi A^{\prime}(\xi) = -\frac{1}{2}\frac{e^{\xi}}{(e^{\xi}+1)^2} <0.
 \end{align}
 It follows immediately that, 
 \begin{align}\label{sec_TTA_prop1_eqn2}
        {A^{\prime}(\xi)} = \frac{(e^\xi - \xi)^2 - (1 + \xi^2)}{4\xi^2(1+e^\xi)^2}.
 \end{align}
Since $ (e^\xi - \xi)^2 - (1+ \xi^2) > 0 $ for all $ \xi\in \mb{R}^+ $, $A'(\xi) >0$ for all $ \xi\in \mb{R}^+ $. 
Also, $A^{\prime\prime}(\xi)=-{2A^{\prime}(\xi)}/{\xi}+4A(\xi)\big[A(\xi)+\xi A^{\prime}(\xi)\big]$ is a continuous function, thus completing the claim. 
 \end{proof}

\begin{lemma}\label{semi_ortho_lemma1}
For any $n \ge 2$, the following claims are true for the function $h^{\prime}_n : \mb{R}^+\to \mb{R}^+$, \\
(a) For any fixed $z\in \mb{R}^+$, $h_{n}^{\prime}(z)$ is an increasing function of $n$.
\\
(b) For any fixed $z\in \mb{R}^+$, define $h^{\prime}(z) = - A^{\prime}(\sqrt{z})/ \{16 \,\sqrt{z} A^3(\sqrt{z})\}$. Then, $h_{n}^{\prime}(z)$ converges pointwise to $h^{\prime}(z)$. Also, $h^{\prime}(z) < 1$ for all $z\in \mb{R}^+$.
\end{lemma}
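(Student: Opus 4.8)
The plan is to fix $z\in\mb{R}^+$ throughout, set $x=\sqrt z$, $a=A(x)$ and $b=A'(x)/x$, and analyze the single‑variable dependence on $n$. By Proposition~\ref{sec_TTA_prop1}, $a\in(-1/8,0)$ (as $A$ is increasing with $A(0)=-1/8$ and maps $\mb{R}^+$ into $\mb{R}^-$) and $b>0$, while from the derivation of $h_n'$ in the proof of Theorem~\ref{thm_semi_ortho}, specialized to $\sigma_\beta=1$,
\[
h_n'(z)=b\Big(\frac{1}{n\sigma_n^{2}}+\frac{1}{2\sigma_n^{3}}\Big),\qquad \sigma_n=\frac1n-2a>0 .
\]

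For part (a), since $b>0$ it suffices to show that $g(t):=\dfrac{1}{t\sigma_t^{2}}+\dfrac{1}{2\sigma_t^{3}}$ is increasing in $t>0$, where $\sigma_t=1/t-2a$ and $\sigma_t'=-1/t^{2}$. A direct differentiation yields
\[
g'(t)=\frac{1}{t^{2}\sigma_t^{4}}\Big(-\sigma_t^{2}+\frac{2\sigma_t}{t}+\frac32\Big),
\]
so the sign of $g'(t)$ equals that of the downward parabola $q(\sigma)=-\sigma^{2}+2\sigma/t+3/2$, which is positive strictly between its roots $\tfrac1t\pm\sqrt{\tfrac1{t^{2}}+\tfrac32}$. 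The smaller root is negative and the larger root exceeds $\tfrac1t+\tfrac14$; since $-2a\in(0,1/4)$ we have $0<\sigma_t<\tfrac1t+\tfrac14$, hence $q(\sigma_t)>0$ and $g'(t)>0$. (The argument uses no lower bound on $n$, so it covers $n\ge2$ a fortiori.)

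For part (b), letting $n\to\infty$ gives $\sigma_n\to-2a$ and $1/(n\sigma_n^{2})\to0$, so $h_n'(z)\to b/\{2(-2a)^{3}\}=-A'(x)/\{16x A^{3}(x)\}=h'(z)$ (the limit is positive since $a<0$). It then remains to prove $h'(z)<1$, i.e.\ $A'(x)/x< -16A^{3}(x)$ for every $x>0$. Here I would insert the closed forms $A(x)=-\dfrac{e^{x}-1}{4x(e^{x}+1)}$ and $A'(x)/x=\dfrac{(e^{x}-x)^{2}-(1+x^{2})}{4x^{3}(1+e^{x})^{2}}$ from Proposition~\ref{sec_TTA_prop1}, clear the (strictly positive) common denominators, simplify $(e^{x}-x)^{2}-(1+x^{2})=e^{2x}-2xe^{x}-1$, and use $(e^{x}+1)^{2}-(e^{x}-1)^{2}=4e^{x}$; after cancellation of the common factor $2e^{x}$ the inequality collapses to
\[
\phi(x):=e^{x}(x-2)+x+2>0,\qquad x>0 .
\]
This is immediate: $\phi(0)=0$, $\phi'(x)=e^{x}(x-1)+1$ with $\phi'(0)=0$, and $\phi''(x)=xe^{x}>0$ on $\mb{R}^+$, so $\phi'>0$ and hence $\phi>0$ on $\mb{R}^+$.

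The only nontrivial step is the algebraic reduction of $A'(x)/x<-16A^{3}(x)$ to $\phi(x)>0$; I expect clearing denominators to be the place to be careful, since it mixes polynomial and exponential terms, but the identities $(e^{x}-x)^{2}-(1+x^{2})=e^{2x}-2xe^{x}-1$ and $(e^{x}+1)^{2}-(e^{x}-1)^{2}=4e^{x}$ make everything telescope cleanly. Everything else is bookkeeping with the sign of $A$ and a one‑line parabola estimate.
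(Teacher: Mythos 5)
Your proof is correct. Part (b) is essentially the paper's argument: the same limit computation, and the same reduction of $h'(z)<1$ to the positivity of $\phi(x)=e^{x}(x-2)+x+2$ on $\mb{R}^+$ — the paper works with its negative, $\psi(x)=2(e^{x}-1)-x(e^{x}+1)<0$, proved by the identical two-derivative argument, and the algebra you sketch does collapse exactly as you claim, since $(e^{2x}-2xe^{x}-1)(1+e^{x})-(e^{x}-1)^{3}=2e^{x}\{2(e^{x}-1)-x(e^{x}+1)\}=-2e^{x}\phi(x)$. Part (a), however, takes a genuinely different route. The paper argues discretely: it notes $\sigma_n^{-2}$ increases in $n$ and separately shows $1/n+1/(2\sigma_n)<1/(n+1)+1/(2\sigma_{n+1})$ via the product bound $\sigma_{n+1}\sigma_n<1/2$, which rests on $\{(n+1)^{-1}+1/4\}(n^{-1}+1/4)<1/2$ and therefore genuinely requires $n\ge 2$ (it fails at $n=1$). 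Your continuous-variable argument — differentiating $g(t)=1/(t\sigma_t^{2})+1/(2\sigma_t^{3})$ and reducing the sign of $g'$ to the downward parabola $-\sigma^{2}+2\sigma/t+3/2$ evaluated at $\sigma_t\in(1/t,\,1/t+1/4]$ — checks out line by line: $g'(t)=t^{-2}\sigma_t^{-4}(-\sigma_t^{2}+2\sigma_t/t+3/2)$ is correct, the smaller root is negative, and the larger root exceeds $1/t+\sqrt{3/2}>1/t+1/4\ge\sigma_t$. This is cleaner and strictly stronger: it yields monotonicity in $n$ for all $n\ge 1$, not just $n\ge 2$, so combined with part (b) it gives $h_n'(z)\le h'(z)<1$ uniformly in $n\ge 1$ and would let one dispense with the separate Case $n=1$ in the proof of Theorem~\ref{thm_semi_ortho}.
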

\begin{proof} 
\noindent {\em Part (a):} From Proposition \ref{sec_TTA_prop1} it is clear that  $h_{n}^{\prime}(z) > 0$  for all $z\in \mb{R}^+$. 
For any fixed $z\in \mb{R}^+$, it is easy to see $\sigma^{-2}_n$ increases with $n$. Next we show that for $n\geq2$, 
\begin{align}\label{semi_ortho_proof_1}
\frac{1}{n} +  \frac{1}{2\,\sigma_n} < \frac{1}{n+1} +  \frac{1}{2\,\sigma_{n+1}}.
\end{align}
We begin with the fact that for $n\geq 2$, $\{(n+1)^{-1} +  1/4\}(1/n + 1/4)  < 1/2$. From Proposition \ref{sec_TTA_prop1} we know that $-1/8\leq A(\sqrt{z}) < 0$ for all $z\in \mb{R}^+$. Then for any fixed $z$ on $\mb{R}^+$ we have $\sigma_{n+1} \,\sigma_n <1/2$ which when 
multiplied on the both sides by $1/n - 1/(n+1)$ yields \eqref{semi_ortho_proof_1}. 
This proves the first part of  Lemma \ref{semi_ortho_lemma1}.  

\noindent {\em Part (b):} For a fixed $z\in\mb{R}^+$, it is easy to see that,  $\sigma_n \to -2A(\sqrt{z})$ as $n \to \infty$. This leads to 
\begin{eqnarray}\label{eq:lim}
\lim_{n\to \infty} \trd{\frac{1}{n} + \frac{1}{2\,\sigma_n}} = - \frac{1}{4A(\sqrt{z})}.
\end{eqnarray}
Multiplying \eqref{eq:lim} with $A^{\prime}(\sqrt{z})/\sqrt{z}$ for a fixed $z\in\mb{R}^+$, we get 
$\lim_{n \to \infty} h^{\prime}_n(z) = h^{\prime}(z) $.  Next we show that for $h^{\prime}(z)<1$ for any $z\in\mb{R}^+$. From Proposition \ref{sec_TTA_prop1},
\begin{align}
\frac{-\,A^{\prime}(\sqrt{z})}{16 \,\sqrt{z} \, A^3(\sqrt{z})} = \frac{\big(e^{2\sqrt{z}} - 2\sqrt{z}e^{\sqrt{z}} - 1\big) \big(1+ e^{\sqrt{z}}\big)}{\big(e^{\sqrt{z}}-1\big)^3} > 0.\label{eq:lim2}
\end{align}
Next, write $\psi(x) = 2(e^{x}-1) -x(e^{x}+1)$. Then $\psi'(x) = e^x - xe^x -1$ and $\psi''(x) = -xe^x$.  Hence $\psi(0) = 0$, $\psi'(0) = 0$ and $\psi'$ is decreasing, which entails $\psi$ is decreasing for $x > 0$ and $\psi(x) < 0$ for $x > 0$. Hence $2(e^{\sqrt{z}}-1) - \sqrt{z}(e^{\sqrt{z}}+1)<0$ for $z \in \mathbb{R}^+$ and the numerator of the right hand side of \eqref{eq:lim2} is 
$$
\big(e^{2\sqrt{z}} - 2\sqrt{z}e^{\sqrt{z}} - 1\big) \big(1+ e^{\sqrt{z}}\big) - \big(e^{\sqrt{z}}-1\big)^3 =2 e^{\sqrt{z}} \big\{2(e^{\sqrt{z}}-1) - \sqrt{z}(e^{\sqrt{z}}+1)\big\} < 0.
$$
This proves the second part of  Lemma \ref{semi_ortho_lemma1}.  
\end{proof}

\bibliographystyle{plainnat}
\bibliography{reference}


%
%
%
\end{document}